\documentclass[11pt,reqno]{amsart}
\usepackage{graphicx}
\usepackage[margin=3.5cm]{geometry}
\usepackage{mathtools}

\usepackage{amsfonts,latexsym,amssymb,amsmath,url,amsthm, color}
\usepackage[dvipsnames]{xcolor}

\usepackage[
    colorlinks=true,
    linkcolor=Violet,
    citecolor=Violet,
    urlcolor=Violet
]{hyperref}

\numberwithin{equation}{section}

\theoremstyle{plain}
\newtheorem{theorem}{Theorem}[section]
\newtheorem*{theorem*}{Theorem}
\newtheorem{lemma}[theorem]{Lemma}
\newtheorem{corollary}[theorem]{Corollary}
\newtheorem{proposition}[theorem]{Proposition}

\theoremstyle{definition}

\theoremstyle{remark}

\newtheorem*{ack}{Acknowledgements}
\newtheorem*{AI}{AI statement}

\title{Integer symplectic matrices in expanding boxes}

\newcommand{\Z}{\mathbb{Z}}
\newcommand{\R}{\mathbb{R}}

\newcommand{\Q}{\mathbb{Q}}

\newcommand{\x}{\mathbf{x}}
\newcommand{\z}{\mathbf{z}}
\newcommand{\y}{\mathbf{y}}
\newcommand{\w}{\mathbf{w}}

\newcommand{\cc}{\mathbf{c}}

\DeclareMathOperator{\SL}{SL}
\DeclareMathOperator{\GL}{GL}

\DeclareMathOperator{\rank}{rank}
\DeclareMathOperator{\Sp}{Sp}

\let\originalLambda\Lambda
\renewcommand{\Lambda}{\mathsf{\originalLambda}}

\let\originalGamma\Gamma
\renewcommand{\Gamma}{\mathsf{\originalGamma}}

\let\originalSigma\Sigma
\renewcommand{\Sigma}{\mathsf{\originalSigma}}

\let\originalXi\Xi
\renewcommand{\Xi}{\mathsf{\originalXi}}

\begin{document}

\author{Lena Wurzinger}
\address{Institute of Science and Technology Austria (ISTA)\\
Am Campus 1\\
3400 Klosterneuburg\\
Austria}
\email{lena.wurzinger@ist.ac.at}

\subjclass[2020]{11C20 (14G05, 14M15)}

\begin{abstract}
    We prove an essentially optimal uniform upper bound for the number of integer symplectic matrices inside a box of expanding side length and arbitrary centre. 
\end{abstract}
\maketitle

\setcounter{tocdepth}{1}
\tableofcontents

\thispagestyle{empty}

\section{Introduction}

Let $m \ge 1$ be a positive integer, let $\mathcal{M}_{2m}(\R)$ denote the set of $2m \times 2m$ real matrices, and let $\Sp_{m}(\R) \subseteq \mathcal{M}_{2m}(\R)$ denote the set of $2m \times 2m$ real symplectic matrices. Recall that $U \in \mathcal{M}_{2m}(\R)$ is  \emph{symplectic} if 
\begin{equation}
\label{eq:def_syplectic_matrix}
    U^t \Omega \, U = \Omega, \qquad 
    \text{ where }
    \qquad 
    \Omega = \begin{pmatrix}
        O_m & I_m \\
        -I_m & O_m
    \end{pmatrix},
\end{equation}
with $I_m$ denoting the $m\times m$ identity matrix and $O_m$ the $m\times m$ zero matrix. Similarly, we denote by $\mathcal{M}_{2m}(\Z)$ the set of all $2m \times 2m$ integer matrices and by $\Sp_m(\Z)$ the set of all $2m \times 2m$ integer symplectic matrices. 

Let $\mathbf{K}=(K_{i,j})_{i,j=1}^{2m} \in \mathcal{M}_{2m}(\R)$ and denote by 
\[
\mathfrak{B}_{\mathbf{K}}(T) = \prod_{i,j=1}^{2m} [K_{i,j}-T, K_{i,j}+T]
\]
 a box of side length $2T$. 
We prove a uniform upper bound for the number of symplectic matrices in $ \mathfrak{B}_{\mathbf{K}}(T) $, that is, for the counting function
\[
N_{m}( \mathfrak{B}_{\mathbf{K}}(T) ) = 
    \#
   \left( \Sp_{m}(\Z) \cap \mathfrak{B}_{\mathbf{K}}(T)
    \right).
\] 
Throughout, we use the convention that all implicit constants may depend on $m$.

\begin{theorem}\label{thm:upperbound}
    For $T\ge 1$, we have 
    \[
    N_{m}( \mathfrak{B}_{\mathbf{K}}(T) ) \ll_\varepsilon T^{m(m+1)+\varepsilon},
    \] 
    uniformly in $\mathbf{K} = (K_{i,j})_{i,j=1}^{2m} \in \mathcal{M}_{2m}(\R)$.
\end{theorem}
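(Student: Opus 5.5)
Write $U=(\x_1,\dots,\x_m,\y_1,\dots,\y_m)$ by columns. The symplectic condition \eqref{eq:def_syplectic_matrix} says exactly that
\[
\x_i^t\Omega\x_j=0,\qquad \y_i^t\Omega\y_j=0,\qquad \x_i^t\Omega\y_j=\delta_{ij}.
\]
The exponent $m(m+1)=\dim\Sp_m$ is what we aim for. The plan is to count columns one at a time, peeling off one symplectic pair $(\x_1,\y_1)$ at a time, and to use induction on $m$. The induction runs through the symplectic complement of $\langle \x_1,\y_1\rangle$, which is a rank-$(2m-2)$ lattice carrying a unimodular symplectic form. We must keep uniformity in the centre $\mathbf{K}$ throughout, so every bound we use must depend only on $T$, not on the size of the entries.

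\textbf{Step 1 (the first pair).} The column $\x_1$ lies in a box of side $2T$ in $\R^{2m}$, so there are $\ll T^{2m}$ choices. Given $\x_1$ (necessarily primitive), the vector $\y_1$ must satisfy the single linear equation $\x_1^t\Omega\y_1=1$ and lie in a box of side $2T$. The solutions form a translate of a rank-$(2m-1)$ lattice $\Lambda_{\x_1}=\{\y:\x_1^t\Omega\y=0\}$, whose covolume is $\|\Omega\x_1\|=\|\x_1\|$.

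A naive count of $T^{2m-1}/\|\x_1\|$ is useless when the centre is small. It is also not uniform, because boxes meet lattice translates in a way governed by successive minima. So we bound the count by $\ll \prod_i \max(1,T/\lambda_i(\Lambda_{\x_1}))$.

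\textbf{Step 2 (descending to smaller $m$).} Once $(\x_1,\y_1)$ is fixed, the orthogonal complement $W=\langle\x_1,\y_1\rangle^{\perp_\Omega}\cap\Z^{2m}$ is a primitive sublattice of rank $2m-2$ with an induced unimodular symplectic form. The remaining $2m-2$ columns form a symplectic basis of $W$ lying in translated boxes. Choosing a symplectic basis of $W$ identifies the problem with counting elements of $\Sp_{m-1}(\Z)$ in the image of a box under a linear map. That image is a skewed parallelepiped, not a box.

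For this reason we should prove a stronger inductive statement. It should count symplectic bases of a rank-$2k$ unimodular symplectic lattice $L$ whose vectors lie in prescribed translates of a convex body. The bound should be of the shape $T^{\varepsilon}\prod(\text{lattice-point counts})$, expressed through the successive minima of $L$ with respect to the body. The dependence should be such that, after summing over $(\x_1,\y_1)$, we recover $T^{m(m+1)+\varepsilon}$. The bookkeeping is that the pair costs $T^{2m}\cdot T^{2m-1}$ heuristically, while the equations $\x_1^t\Omega\x_j=0$ and the others save a factor of $T$ per constraint. This gives $\dim\Sp_m-\dim\Sp_{m-1}=4m-2$ in the non-degenerate situation. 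Summing $4m-2$ over the levels gives $m(m+1)$.

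\textbf{Main obstacle.} The hard step is the degenerate cases, where the centre $\mathbf{K}$ makes the relevant lattices very skew: $\Lambda_{\x_1}$ or $W$ may have a very short vector. In that case the per-level saving fails, and we must show the loss is compensated at later levels. My approach is to apply dyadic decomposition over the successive minima of $W$ (and of the lattices $\Lambda$). Then:
\begin{enumerate}
\item use the fact that the product of the minima equals the covolume, which is controlled by $\|\x_1\wedge\y_1\|$ (via unimodularity of $\Z^{2m}$ and duality of $W$ with $\langle\x_1,\y_1\rangle$);
\item count pairs $(\x_1,\y_1)$ with given $\|\x_1\wedge\y_1\|$ by a divisor-type bound, which is where $T^\varepsilon$ enters.
\end{enumerate}
If this bookkeeping proves too lossy, the fallback is an alternative route. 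We would instead fix the top $m\times 2m$ block, i.e.\ the rows, forming a Lagrangian-type system. We would then count its integral completions, which form a translate of a lattice of symmetric matrices of dimension $m(m+1)/2$, again via successive-minima lattice point bounds.
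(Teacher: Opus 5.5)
Your proposal is a plan rather than a proof, and the step it leaves open is the one that carries the theorem. The ``stronger inductive statement'' for symplectic bases of $W=\langle \x_1,\y_1\rangle^{\perp_\omega}\cap\Z^{2m}$ in skewed translated bodies is never formulated. The degenerate configurations, which are exactly where uniformity in $\mathbf{K}$ is decided, are deferred to unspecified dyadic bookkeeping and to a ``divisor-type bound'' for pairs with prescribed $\|\x_1\wedge\y_1\|$. That bound is itself a counting problem in translated boxes of the same kind as the theorem.

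The exponent heuristic meant to guide the induction is also wrong: $m(m+1)\neq\dim\Sp_m=m(2m+1)$, $\dim\Sp_m-\dim\Sp_{m-1}=4m-1$, and $\sum_{k=1}^m(4k-2)=2m^2$. The exponent should grow by only $m(m+1)-(m-1)m=2m$ per level. Yet already for $\mathbf{K}=\mathbf{0}$ there are $\asymp T^{4m-2}$ pairs $(\x_1,\y_1)\in([-T,T]^{2m}\cap\Z^{2m})^2$ with $\omega(\x_1,\y_1)=1$. So, averaged over these pairs, the number of admissible symplectic bases of $W$ must be $\ll T^{(m-1)(m-2)+\varepsilon}$. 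That is a factor $T^{2(m-1)}$ smaller than what treating the remaining columns as an instance of the theorem in dimension $2m-2$ would give. Your inductive hypothesis would have to extract this saving from how $W$ sits relative to the boxes, and the proposal gives no mechanism for it.

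Your fallback is the skeleton of the paper's argument, with rows in place of columns (equivalent, since $U^t\in\Sp_m(\Z)$). The first $m$ columns span a primitive Lagrangian lattice $\Lambda_U$, and two completions differ by $(\x_1~\cdots~\x_m)M$ with $M$ symmetric integral. But both hard ingredients are missing.

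\emph{Counting Lagrangian frames in translated boxes uniformly in $\mathbf{K}$.} The paper subtracts from $\x_1$ a fixed admissible $\cc(\x_2,\ldots,\x_r)$; this keeps the tuple isotropic and reduces to centred boxes. It then counts isotropic lattices with prescribed successive minima via Franke--Manin--Tschinkel on symplectic flag varieties, obtaining $\ll D_1^{\varepsilon}\prod_i D_i^{2(m-i+1)}$ (Proposition~\ref{prop:translated_isotropic_frames}).

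\emph{Completions cannot be counted separately.} For $\mathbf{K}=\mathbf{0}$ and $U=\begin{pmatrix} I_m & S\\ O_m & I_m\end{pmatrix}$ with $S$ symmetric and $|S|\le T$, a single frame has $\asymp T^{m(m+1)/2}$ completions, so multiplying the two counts overshoots. The paper resolves this with a trade-off. It isolates the lattice $\Sigma_U$ of short directions of $\Lambda_U$, with successive minima $\asymp\theta_i$; there are $\ll T^{\varepsilon}\prod_i\theta_i^{2(m-i+1)}$ such lattices. The projections of the columns modulo $\Sigma_U$ are rigid and are counted as an isotropic configuration in $\Gamma_{\Sigma_U}/\Sigma_U$, and completions differ by vectors of $\Sigma_U$. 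The remaining freedom is encoded as an \emph{isotropic} tuple of coefficient vectors $(\mathbf{b}_i,\mathbf{c}_i)$ ranging over sets of diameter $\ll T/\theta_i$. Proposition~\ref{prop:translated_isotropic_frames} again counts these as $\ll T^{\varepsilon}\prod_i(T/\theta_i)^{2(m-i+1)}$, and the $\theta$-dependence cancels exactly. Without some such mechanism the fallback does not close either.
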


The upper bound in Theorem~\ref{thm:upperbound} is essentially sharp, as is  seen by considering boxes centred at the origin and applying a special case of work on counting lattice points of bounded norm on Lie groups due to Maucourant~\cite{maucourant}; details are provided in Section~\ref{section:asymptotic}.

\begin{theorem}[Maucourant]
   \label{theorem:asymptotic}
    There is a constant $a_m>0$ such that 
    \[
     N_{m}( \mathfrak{B}_{\mathbf{0}}(T) ) \sim a_m T^{m(m+1)}  
    \]
    as $T \to \infty$.
\end{theorem}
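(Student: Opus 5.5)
The plan is to derive the asymptotic directly from Maucourant's general theorem on counting lattice points of bounded norm in semisimple Lie groups. No new counting is needed: the work is in putting $\mathfrak{B}_{\mathbf 0}(T)$ into his framework and reading off the exponent.

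First, observe that $\mathfrak{B}_{\mathbf 0}(T)\cap \Sp_m(\R)=\{U\in\Sp_m(\R): \|U\|_\infty\le T\}$, where $\|U\|_\infty=\max_{i,j}|U_{i,j}|$. This is a norm on the vector space $\mathcal M_{2m}(\R)$. The group $G=\Sp_m(\R)$ is a connected semisimple Lie group. It sits inside $\mathcal M_{2m}(\R)$ via its standard representation $\rho$, which is faithful and irreducible. Moreover $\Gamma=\Sp_m(\Z)$ is a lattice in $G$ by Borel--Harish-Chandra. Maucourant's theorem treats exactly this setting: a lattice in a connected semisimple group with finite centre, a finite-dimensional representation, and an arbitrary norm on the representation space. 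It gives
\[
\#\{\gamma\in\Gamma:\|\rho(\gamma)\|\le T\}\sim c\, T^{a}(\log T)^{b},
\]
with $c>0$. The exponents $a,b$ are determined by the highest weights of $\rho$ relative to the half-sum $2\rho_G$ of positive roots, weighted by multiplicities. Equivalently, the count is asymptotic to $\mathrm{vol}(\{g\in G:\|g\|\le T\})/\mathrm{vol}(G/\Gamma)$, and the volume asymptotics are computed via the Cartan decomposition $G=KA^+K$.

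Second, compute $a$ and $b$ for the standard representation. Take $A$ to be the diagonal matrices $\mathrm{diag}(e^{t_1},\dots,e^{t_m},e^{-t_1},\dots,e^{-t_m})$. The positive chamber is $t_1\ge\dots\ge t_m\ge0$. On $A^+$ one has $\|k a k'\|\asymp e^{t_1}$, so the norm ball corresponds, up to bounded distortion, to $t_1\le \log T$. The Haar density on $A^+$ is $\prod_{\alpha>0}\sinh(\alpha(t))$, where the positive roots of type $C_m$ are $t_i\pm t_j$ for $i<j$ and $2t_i$. This density is $\asymp e^{2\rho_G(t)}$, with
\[
2\rho_G(t)=\sum_{i=1}^m 2(m-i+1)t_i .
\]
The exponent is therefore the maximum of $2\rho_G$ over the polytope $\{t_1\ge\dots\ge t_m\ge0,\ t_1\le 1\}$. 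This linear functional has positive coefficients, so it is maximised uniquely at the vertex $t_1=\dots=t_m=1$, giving
\[
a=\sum_{i=1}^m 2(m-i+1)=m(m+1).
\]
Because the maximum is attained at a single vertex and not along a face, there is no logarithmic factor: $b=0$. This matches Maucourant's criterion, since the highest weight $t_1$ of the standard representation determines the face on which $2\rho_G/\lambda$ attains its maximum.

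Finally, the constant $a_m$ is positive because Maucourant's leading constant is a volume ratio, namely the limit of $\mathrm{vol}(\{\|g\|_\infty\le T\})/T^{m(m+1)}$ divided by $\mathrm{vol}(G/\Gamma)$, and both volumes are positive and finite. The main obstacle is checking the hypotheses carefully: Maucourant's theorem must allow arbitrary norms such as $\|\cdot\|_\infty$, which are not $K$-invariant, and $\Gamma$ must be a lattice in the precise sense he requires (finite centre $\{\pm I\}$, and irreducibility or the appropriate mixing hypothesis, which holds since $\Sp_m(\R)$ is simple). The secondary task is making sure the no-log condition is stated correctly in his formulation. I would do this by verifying that the maximising face of $2\rho_G$ is the single vertex computed above.
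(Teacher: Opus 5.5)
Your proposal is correct and follows the paper's route. Both apply Maucourant's Corollary~1.1 to $\Sp_m(\Z)\subset\Sp_m(\R)$ with the standard representation and the max-entry norm, and both compute $\beta=2\rho_G=\sum_{k} 2(m-k+1)\varepsilon_k$ to obtain $d=m(m+1)$ and $e=0$.

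The only difference is the side of the duality on which you read off $d$ and $e$. The paper works in $\mathfrak{a}^\star$, scaling $\beta$ onto the facet $\operatorname{conv}\{\varepsilon_1,\ldots,\varepsilon_m\}$ of the weight polytope; that facet has codimension one, so $e=0$. You instead maximise $2\rho_G$ over the dual cube intersected with the Weyl chamber, and your unique maximiser $(1,\ldots,1)$ is exactly the vertex dual to that facet. Your claim that the Haar density is $\asymp e^{2\rho_G}$ is only an upper bound near the walls $t_i=t_j$, and the maximiser lies on those walls. This affects only the constant, not the exponents.
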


For a general centre $\mathbf{K}$, however, the box $\mathfrak B_{\mathbf K}(T)$ is no longer a norm ball covered by such results; achieving uniformity in the location of the centre of the box constitutes the main feature of our work. 
While results of Gorodnik--Nevo~\cite{gorodnik_nevo} allow for lattice point counting on considerably more general families of domains, they would not immediately yield Theorem~\ref{thm:upperbound}. Firstly, their results require certain well-roundedness conditions,
which amount to controlling how the Haar measure of the domain changes under small left and right perturbations in the group. In our case, one would consider, for $U \in \mathfrak{B}_{\mathbf{K}}(T)$, perturbations 
\[
U\mapsto h_1Uh_2
\]
where $h_1,h_2$ are close (say, in an $\varepsilon$-neighbourhood) to the identity. The resulting change of the entries of $U$ is of order $\ll \varepsilon |U|$. Now, when $|\mathbf{K}|$ is large compared to $T$, any matrices in $\mathfrak{B}_{\mathbf{K}}(T)\cap \Sp_m(\R)$ will be of order $|U| \asymp |\mathbf{K}|$. Therefore the control on the perturbation cannot be easily made uniform in terms of $\mathbf{K}$.
Secondly, one would need to control the Haar measure of $\mathfrak{B}_{\mathbf{K}} (T) \cap \Sp_m(\R)$ uniformly; unlike for boxes centred at the origin, these domains are not norm balls, and the required volume estimates do not appear to be directly accessible from the standard Cartan-decomposition arguments.

When $m=1$, we have 
\[
    \Sp_1(\mathbb{Z}) =
    \operatorname{SL}_2(\mathbb{Z}) = \{M \in \mathcal{M}_2(\Z):~\det(M)=1 \}.
\]
Krieg~\cite{krieg_maximum_norm} proved that the number of matrices 
of maximum norm $|M| = N$ in $\SL_{2}(\Z)$ is $32 \varphi (N)$ for $N \ge 2$, where $\varphi$ denotes Euler's totient function. This yields 
\[
     N_{1}( \mathfrak{B}_{\mathbf{0}}(T) ) = 32 \sum_{2\le N \le T} \varphi (N) + O(1) = a_1T^2+O(T\log T)
\]
as $T \to \infty$. 
The closely related question of counting $M \in \SL_2(\Z)$ of bounded Frobenius norm $\|M\|_F$ is equivalent to the classical hyperbolic lattice point problem. Selberg's result \cite[\S12]{iwaniec_spectral} gives
\begin{equation}\label{eq:selberg}
    \#\{M \in\SL_2(\Z):
           \|M\|_F\le T\}
= cT^2+O(T^{4/3})
\end{equation}
where $c>0$ is a positive constant. 
Quantitative asymptotic formulas for the same question on $\SL_n(\Z)$, $n\ge 3$, can be found in work of Blomer--Lutsko
\cite{blomerlutsko}, improving upon earlier error terms due to Gorodnik--Nevo--Yehoshua \cite{gorodnik_nevo_yehoshua} and Duke--Rudnick--Sarnak~\cite{duke_rudnick_sarnak}.

Shparlinski \cite{shparlinski_counting_matrices} showed that the number of integer matrices of fixed determinant with entries contained inside an expanding box is bounded by 
 \begin{equation}\label{ eq:shparlinski_detbound}
   \# \{ 
    M \in 
    \mathcal{M}_n(\Z) \cap \mathfrak{B}_{\mathbf{K}}(T):~\det(M)=d
    \} \ll T^{n^2 - n} \log(T)
 \end{equation}
 for any integer $d$, uniformly in $d$ and $\mathbf{K}$;  thus Theorem~\ref{thm:upperbound} follows from \eqref{ eq:shparlinski_detbound} in the case $m=1$. For boxes centred at the origin, asymptotic formulas due to  
  Duke--Rudnick--Sarnak~\cite{duke_rudnick_sarnak} (for $d\neq 0$) and Katznelson~\cite{katznelson_singular_matrices} (for $d = 0$) yield
 \[
 \# \{ 
    M \in 
    \mathcal{M}_n(\Z) \cap \mathfrak{B}_{\mathbf{0}}(T):~\det(M)=d
    \} 
    \asymp \begin{cases}
        T^{n^2-n} & \text{if } d\neq 0, \\
        T^{n^2-n} \log(T)
        & \text{if } d=0,
    \end{cases}
    \]
    where $d$ is fixed.
 Recent work of Afifurrahman~\cite{afifurrahman_uniform_formula} concerning $2\times2$ integer matrices, subsequently  refined by Dhanda--Haynes--Prasala~\cite{dhanda_haynes_prasala} and then by Chapman-Mugdal~\cite{chapman_mudgal_matrices, chapman_mudgal_quadratic}, proves that the asymptotic formulas hold uniformly for $d\ne 0$ in ranges growing with $T$.

More broadly, various counting problems involving matrices with
prescribed algebraic or arithmetic properties have received considerable attention.
In a direction closely related to \eqref{eq:selberg},
Friedlander--Iwaniec \cite{friedlander_iwaniec}
studied the hyperbolic prime number theorem,
concerning prime values of the squared Frobenius norm
$\|M\|_F^2$ for $M\in\SL_2(\Z)$; they give upper and lower bounds of the same order, subject to a standard conjecture
concerning primes in arithmetic progressions.
Very recently, Shparlinski--Xiao
\cite{shparlinskixiao2026} and Sedunova
\cite{sedunovahyperbolic} obtained unconditional
asymptotic formulas with prime values replaced by
squarefree values, under maximum norm and
Frobenius norm constraints, respectively. 
Further recent work on matrix-counting problems includes uniform bounds for
integer matrices with a prescribed characteristic
polynomial due to Habegger--Ostafe--Shparlinski
\cite{habegger_ostafe_shparlinski}, estimates
for rational matrices of prescribed rank, determinant
or characteristic polynomial due to
Afifurrahman et al.\
\cite{afifurrahman_kuperberg_ostafe_shparlinski}
and bounds for integer matrix
factorisations due to Afifurrahman  \cite{afifurrahman_matrix_products} and Browning
et al.~\cite{browningetal_divisor}.

 \subsection*{Proof strategy} 
 We begin the proof of Theorem~\ref{thm:upperbound}
by associating to each matrix $U \in \mathfrak{B}_{\mathbf{K}}(T) \cap \Sp_m(\Z)$ the primitive isotropic lattice $\Lambda_U$ that is generated by the first $m$ columns of $U$. Here, \emph{isotropic} means that 
\[
    \x^t \Omega \, \y = 0 \text{ for all }\x,\y \in \Lambda_U,
\]
and \emph{primitive} means that 
\[
    \mathrm{span}_\R (\Lambda_U) \cap \Z^{2m} = \Lambda_U.
\]

The ambiguity in completing the first $m$ columns to a symplectic integer matrix in $\mathfrak{B}_{\mathbf{K}}(T)$ is controlled by the sufficiently short (of size $\ll T$) vectors of $\Lambda_U$. Indeed, fixing the first $m$ columns $\x_1,\ldots,\x_m$, consider two extensions $(\x_1~\cdots~\x_m~\y_1~\cdots~\y_m)$ and $(\x_1~\cdots~\x_m~\widetilde{\y}_1~\cdots~\widetilde{\y}_m)$ to admissible matrices in $\mathfrak{B}_{\mathbf{K}}(T) \cap \Sp_m(\Z)$. By the box constraint, we have $|\y_i- \widetilde{\y}_i| \ll T$, while we will see that 
\[
\y_i - \widetilde{\y}_i \in \Lambda_U, 
\]
for $i=1,\ldots,m$. 
For instance, if $\Lambda_U$ contains no sufficiently short nonzero vectors, this implies that the first $m$ columns have at most one completion.

We exploit two complementary phenomena: short vectors can be handled by techniques from the geometry of numbers, while the absence of short vectors imposes more rigidity. 
Therefore it is natural to decompose the set 
$ \mathfrak{B}_{\mathbf{K}}(T) \cap \Sp_m(\Z) $ according to the rank $q$  of the lattice $\Sigma_U$, which is the primitive lattice given by the intersection of the $\R$-span of all short vectors in $\Lambda_U$ with $\Lambda_U$ itself.
Let us explain what happens in the two extremal cases $q=0$ and $q=m$.

For $q=0$, there are \emph{no short directions}.
As discussed above, for each choice of $\x_1,\ldots,\x_m$, there is at most one possible extension. It therefore suffices to bound the number of choices for isotropic tuples $(\x_1,\ldots,\x_m)$. By Proposition~\ref{prop:translated_isotropic_frames}, this number is 
\[
    \ll_\varepsilon T^{m(m+1)+\varepsilon}.
\]

By contrast, for $q = m$, \emph{all directions are short} and we have $\Lambda_U = \Sigma_U$.  We first decompose the successive minima $\sigma_1 \le \cdots \le \sigma_m \ll T$ of
$\Sigma_U$  into $\ll_\varepsilon T^\varepsilon$ dyadic ranges $\theta_i \le \sigma_i \le 2\theta_i$; for each range, we have by Corollary~\ref{cor:succminima} that   
\[
    \# \{ 
    \Sigma \text{ prim.~isotropic rank-$m$ lattice in $ \Z^{2m}$}:~ \theta_i \le \sigma_i \le 2\theta_i
    \} \ll_\varepsilon T^{\varepsilon}\prod_{i=1}^m \theta_i^{2(m-i+1)}.
\]
We obtain this upper bound  by encoding our lattices by primitive isotropic flags and applying bounds due to Franke--Manin--Tschinkel \cite{franke_manin_tschinkel} for rational points of bounded height on symplectic flag varieties. For a fixed lattice $\Sigma$ with successive minima in a range as above, by Lemma~\ref{lem:boundingpersigmaw} the number of associated symplectic matrices $U$ is at most 
\[
    \# 
    \{U  \in \mathfrak{B}_{\mathbf{K}}(T) \cap \Sp_m(\Z):~\Sigma_U = \Sigma 
    \} \ll_\varepsilon T^{m(m+1)+\varepsilon}
    \prod_{i=1}^m \theta_i^{-2(m-i+1)}.
\]
The dependence
on the successive minima arising in these two estimates thus cancels exactly.

The proof in the intermediate range $1 \le q \le m-1$ may be viewed as an interpolation between the two extremal cases: the lattice
$\Sigma_U$ records the short directions, while the factor lattice
$\Lambda_U/\Sigma_U$ has no short nonzero vectors. In this way, the proof separates the flexible short directions, which are treated similarly to the case $q=m$, from the rigid long directions, which impose constraints analogous to those in the case $q=0$.

\begin{ack}
    The author is very grateful to her advisor Tim Browning for his support and feedback throughout this project and for communicating the question addressed in Theorem~\ref{thm:upperbound}, which was posed by Igor Shparlinski. She would also like to thank Igor Shparlinski and Alina Ostafe for very interesting discussions, and Fabian Gundlach and Igor Shparlinski for feedback on an earlier version of this draft. Moreover, she would like to thank Nikita P.~Kalinin for his assistance with AI tools.
\end{ack}

\begin{AI}
    Generative AI tools were used to create the proof outline of Theorem~\ref{thm:upperbound}; specifically, GPT-5.6 Sol and Claude Opus 5 were employed. GPT-6 Astra was used for literature research and proofreading. All arguments were independently checked by the author, who takes full responsibility for the contents of the paper. The final text was written by the author.
\end{AI}

\section{Preliminaries} 

In this section, we collect some results and notation that we will need for our proof of Theorem~\ref{thm:upperbound} and for our discussion of Theorem~\ref{theorem:asymptotic}.
We write 
\[
|M|=\max_{i,j}|M_{i,j}|, \quad \text{where }M = (M_{i,j})_{\substack{1 \le i \le n \\ 1\le j \le k}} \in \R^{n\times k},
\]
 and we denote the Euclidean norm of a vector $\x \in \R^n$ by $\| \x \|_2$.

\subsection{Geometry of numbers} 
We recall some facts from the geometry of numbers that will be used throughout. For further background, we refer to \cite{cassels_geometry_numbers}.

Let $n \ge 1$ be a positive integer and let $\Lambda \subseteq \R^{n}$ be a lattice of rank $r \le n$. 
The \emph{$k$-th successive minimum} $\lambda_k$, for $k \le r$, is the smallest radius of a closed Euclidean ball centred at the origin that contains $k$ linearly independent points of $\Lambda$. Its determinant is given by 
\[
\det(\Lambda) = \sqrt{\det(B_\Lambda^t B_\Lambda)},
\]
where the columns of $B_\Lambda$ form a basis of $\Lambda$. By Minkowski's second theorem, we have 
\begin{equation}\label{eq:minkowski}
    \det(\Lambda) \ll \lambda_1 \cdots \lambda_r \ll \det(\Lambda),
\end{equation}
with the implicit constants depending only on $n$.

For any lattice, one may choose a basis adapted to its successive minima, with control on the coefficients of lattice vectors in this basis. The following statement follows from  \cite[Lemma~5]{davenport_cubic_forms}; we use a formulation similar to the one given in \cite[Lemma~1]{browning_heath_brown}.

\begin{lemma}\label{lem:basis_successive_min_size}
     Let $\Lambda \subseteq \R^n$ be a lattice of rank $r$ with successive minima $\lambda_1\le \cdots \le \lambda_r$. Then there is a basis $\z_1, \ldots, \z_r$ of $\Lambda$ such that we have 
     \[
        |\z_i| \ll \lambda_i,
     \]
     and with the property that whenever
\[
    \x=\sum_{i=1}^r a_i\z_i\in\Lambda,
\]
we have
$
    |a_i|\ll |\x| / \lambda_i,
$
for $i=1,\ldots,r$.     All implicit constants only depend on $n$.
\end{lemma}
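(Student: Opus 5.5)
The lemma is the standard Davenport construction, and I would prove it in two steps. First I pick a basis in the right spirit, then I correct it to a genuine lattice basis. Fix linearly independent $\w_1,\ldots,\w_r \in \Lambda$ with $\|\w_i\|_2 = \lambda_i$. These span a sublattice of finite index but need not form a basis of $\Lambda$. Take the flag of primitive sublattices $\Lambda_k = \Lambda \cap \mathrm{span}_\R(\w_1,\ldots,\w_k)$.

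\textbf{Step 1: the basis.} Build a basis $\z_1,\ldots,\z_r$ adapted to this flag, so that $\z_1,\ldots,\z_k$ is a basis of $\Lambda_k$ for each $k$. Choose $\z_k$ inductively as a vector of $\Lambda_k$ whose image generates $\Lambda_k/\Lambda_{k-1}\cong\Z$. Then reduce $\z_k$ modulo $\Lambda_{k-1}$ so that its orthogonal projection onto $\mathrm{span}(\Lambda_{k-1})$ lies in a fundamental parallelepiped of $\z_1,\ldots,\z_{k-1}$.

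\textbf{Step 2: size of $\z_k$.} Write $d_k$ for the distance from $\z_k$ to $\mathrm{span}(\Lambda_{k-1})$. Then $d_k \le \mathrm{dist}(\w_k,\mathrm{span}\Lambda_{k-1}) \le \lambda_k$. This holds because $\w_k$ is a nonzero integer multiple of $\z_k$ modulo $\Lambda_{k-1}$. The parallelepiped reduction bounds the projected part by $\sum_{i<k}|\z_i| \ll \lambda_{k-1}$, using induction. Hence $|\z_k| \ll \lambda_k$, with constants depending only on $n$ since $r\le n$. I will compare the max norm with the Euclidean norm freely, at the cost of factors depending on $n$.

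\textbf{Step 3: coefficient bound.} This is the main point. Minkowski's second theorem \eqref{eq:minkowski}, applied to each $\Lambda_k$, gives $\det\Lambda_k \gg \lambda_1\cdots\lambda_k$. The successive minima of $\Lambda_k$ are at least $\lambda_1,\ldots,\lambda_k$, since they are vectors of $\Lambda$. Since $\det\Lambda_k = d_1\cdots d_k$, we get
\[
d_k = \det\Lambda_k/\det\Lambda_{k-1} \gg \lambda_1\cdots\lambda_k/(\lambda_1\cdots\lambda_{k-1}) \cdot c,
\]
with the upper bound $\det\Lambda_{k-1} \le |\z_1|\cdots|\z_{k-1}| \ll \lambda_1\cdots\lambda_{k-1}$ from Step 2. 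Combined with $d_k\le\lambda_k$, this gives $d_k \asymp \lambda_k$.

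\textbf{Step 4: recovering the coefficients.} For $\x=\sum a_i\z_i$, use the Gram--Schmidt decomposition. The component of $\x$ orthogonal to $\mathrm{span}\Lambda_{r-1}$ has length $|a_r| d_r \le \|\x\|_2$, so $|a_r| \ll |\x|/\lambda_r$. Then pass to $\x - a_r\z_r$, whose norm is $\ll |\x| + |a_r|\lambda_r \ll |\x|$. Inducting downward gives $|a_i| \ll |\x|/\lambda_i$ for all $i$.

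The only delicate step is the lower bound $d_k \gg \lambda_k$. It relies on both Minkowski inequalities holding with constants depending only on $n$.
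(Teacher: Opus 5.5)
Your proof is correct. The paper gives no proof of this lemma; it quotes it from Davenport's Lemma~5, in the formulation of Browning--Heath-Brown. The standard argument behind that result uses the same ingredients as yours, packaged differently. It starts from any basis with $|\z_i|\ll\lambda_i$; the existence of such a basis is quoted there, whereas you construct one along the flag $\Lambda_1\subset\cdots\subset\Lambda_r$. It then runs in three steps:
\begin{enumerate}
\item The $r$-dimensional volume of the parallelepiped spanned by $\z_1,\ldots,\z_{i-1},\x,\z_{i+1},\ldots,\z_r$ equals $|a_i|\det(\Lambda)$.
\item By Hadamard's inequality, this volume is at most $\|\x\|_2\prod_{j\neq i}\|\z_j\|_2$.
\item By Minkowski's second theorem, $\det(\Lambda)\gg\lambda_1\cdots\lambda_r$.
\end{enumerate}
Together these give $|a_i|\ll|\x|/\lambda_i$ for all $i$ at once.

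Your Steps~3--4 instead turn Minkowski plus Hadamard into the statement that the Gram--Schmidt lengths satisfy $d_k\asymp\lambda_k$. You then back-substitute from $a_r$ down to $a_1$, picking up an $n$-dependent constant at each step. The Cramer-rule (volume) version is shorter and avoids the induction. Yours is self-contained and makes explicit that the basis is nearly orthogonal at every level of the flag, i.e.\ $\mathrm{dist}(\z_k,\mathrm{span}_\R(\z_1,\ldots,\z_{k-1}))\asymp\lambda_k$, which is sometimes useful in its own right.
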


For a lattice $\Lambda$ as in Lemma~\ref{lem:basis_successive_min_size} and $\x \in \mathrm{span}_{\R}(\Lambda)$, say $\x = \sum_{i=1}^r \mu_i \z_i$, we observe that there is $\y = \sum_{i=1}^r \lfloor \mu_i \rfloor \z_i \in \Lambda$
with 
\[
    |\x - \y| \ll \sum_{i=1}^r |\z_i| \ll \lambda_r;
\]
this implies the following lemma.

\begin{lemma}\label{lem:close_to_vector}
     Let $\Lambda \subseteq \R^n$ be a lattice of rank $r$ with successive minima $\lambda_1\le \cdots \le \lambda_r$. Assume that $\lambda_r \le  T$. Then, for every $\x \in \mathrm{span}_{\R}(\Lambda)$, there exists $\y \in \Lambda$ such that 
     \[
     | \x - \y | \le C_n T.
     \]
     Here $C_n$ is a positive constant only depending on $n$.
\end{lemma}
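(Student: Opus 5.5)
The plan is to deduce the lemma directly from Lemma~\ref{lem:basis_successive_min_size}, following the observation made just before the statement. First, fix a basis $\z_1,\ldots,\z_r$ of $\Lambda$ as provided by Lemma~\ref{lem:basis_successive_min_size}, so that $|\z_i| \le c_n \lambda_i$ for a constant $c_n$ depending only on $n$. Since $\z_1,\ldots,\z_r$ is a basis of $\Lambda$, it is also an $\R$-basis of $\mathrm{span}_\R(\Lambda)$. Hence any $\x \in \mathrm{span}_\R(\Lambda)$ can be written uniquely as $\x = \sum_{i=1}^r \mu_i \z_i$ with $\mu_i \in \R$.

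Next, set $\y = \sum_{i=1}^r \lfloor \mu_i \rfloor \z_i$. This lies in $\Lambda$ because the coefficients are integers. Then $\x - \y = \sum_{i=1}^r \{\mu_i\}\z_i$, where $0 \le \{\mu_i\} < 1$ are the fractional parts. By the triangle inequality for the maximum norm,
\[
|\x-\y| \le \sum_{i=1}^r |\z_i| \le c_n \sum_{i=1}^r \lambda_i \le r c_n \lambda_r \le n c_n T,
\]
using $\lambda_1 \le \cdots \le \lambda_r \le T$ and $r \le n$. Taking $C_n = n c_n$ completes the argument.

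No step is a genuine obstacle. The only points to check are that the basis from Lemma~\ref{lem:basis_successive_min_size} spans $\mathrm{span}_\R(\Lambda)$ over $\R$ (which is immediate) and that the constant is uniform. Uniformity holds because $c_n$ depends only on $n$ and $r \le n$. The coefficient-control part of Lemma~\ref{lem:basis_successive_min_size} is not needed; only the bound $|\z_i| \ll \lambda_i$ is used.
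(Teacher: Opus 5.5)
Your proof is correct and is the same argument the paper gives in the paragraph preceding the lemma: take the basis from Lemma~\ref{lem:basis_successive_min_size}, round the coordinates down to get $\y=\sum_i \lfloor \mu_i\rfloor \z_i\in\Lambda$, and bound $|\x-\y|\le\sum_i|\z_i|\ll\lambda_r\le T$. Your explicit constant $C_n = n c_n$ simply spells out the paper's implicit constant.
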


For a bounded closed set $\mathcal{S} \subseteq \R^n$, we define the \emph{diameter} of $\mathcal{S}$ to be the supremum of the Euclidean distance between two points in $\mathcal{S}$. For sets $\mathcal{S}$ of diameter $\le T$, an upper bound for $\# (\mathcal{S} \cap \Lambda)$ easily follows from  Lemma~\ref{lem:basis_successive_min_size}. Assuming without loss of generality that the intersection is nonempty, let $\x_0 \in \mathcal{S} \cap \Lambda$, and write 
\[
    \x_0 = \sum_{i=1}^r a_i \z_i
\]
for some $a_i \in \Z$, using the basis supplied by Lemma~\ref{lem:basis_successive_min_size}. If $\x \in \mathcal{S} \cap \Lambda$, then $\x$ may be written as 
\[
   \x = \sum_{i=1}^r b_i \z_i.
\]
Since $|\x - \x_0| \ll T$, it follows by Lemma~\ref{lem:basis_successive_min_size} that 
\[
    |a_i - b_i| \ll T/\lambda_i
\]
for $i=1,\ldots,r$, that is, there are $\ll 1 + T/\lambda_i$ different choices for $b_i$. This yields the following standard bound, which appears in many variations in the literature (see e.g.~\cite[Lemma~2]{schmidt1968}). 

\begin{lemma}\label{lem:latticepointboundbox}
    Let $\Lambda \subseteq \R^n$ be a lattice of rank $r$ with successive minima $\lambda_1\le \cdots \le \lambda_r$. 
For a set $\mathcal{S}$ of diameter $\le T$, the number of lattice points in $\mathcal{S}$ can be bounded as 
        \[
        \#
        \left( \mathcal{S} \cap \Lambda
        \right) \ll \prod_{j=1}^r 
        \left( 
        1 + \frac{T}{\lambda_j}
        \right).
        \]
        The implicit constant depends only on $n$.
\end{lemma}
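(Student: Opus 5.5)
The plan is to turn the preceding discussion into a proof. I would assume $\mathcal{S} \cap \Lambda \neq \emptyset$, since otherwise there is nothing to prove. Fix some $\x_0 \in \mathcal{S} \cap \Lambda$ and take the basis $\z_1,\ldots,\z_r$ of $\Lambda$ supplied by Lemma~\ref{lem:basis_successive_min_size}. Write $\x_0 = \sum_i a_i \z_i$ with $a_i \in \Z$.

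For an arbitrary $\x \in \mathcal{S} \cap \Lambda$, write $\x = \sum_i b_i \z_i$ with $b_i \in \Z$. The difference $\x - \x_0 = \sum_i (b_i - a_i)\z_i$ lies in $\Lambda$. Its Euclidean norm is at most the diameter of $\mathcal{S}$, so it is $\le T$, and hence $|\x-\x_0| \le T$ in the maximum norm as well. The coefficient control in Lemma~\ref{lem:basis_successive_min_size}, applied to the lattice vector $\x - \x_0$, then gives $|b_i - a_i| \le c_n T/\lambda_i$ for a constant $c_n$ depending only on $n$.

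The map $\x \mapsto (b_1,\ldots,b_r)$ is injective because the $\z_i$ form a basis. Therefore $\#(\mathcal{S}\cap\Lambda)$ is at most the number of integer vectors $(b_i)$ with $|b_i - a_i| \le c_n T/\lambda_i$ for each $i$. The count for coordinate $i$ is at most $2c_nT/\lambda_i + 1 \ll 1 + T/\lambda_i$. Taking the product over $i$ gives the stated bound, with an implicit constant depending only on $n$.

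There is no real obstacle here. The only point needing care is that the coefficient bound of Lemma~\ref{lem:basis_successive_min_size} is applied to the lattice vector $\x - \x_0$ rather than to $\x$ itself. This is what makes the estimate independent of the location of $\mathcal{S}$.
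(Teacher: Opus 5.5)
Your proposal is correct and follows the paper's argument essentially verbatim. Like the paper, you fix $\x_0\in\mathcal{S}\cap\Lambda$, expand in the basis of Lemma~\ref{lem:basis_successive_min_size}, apply the coefficient bound to $\x-\x_0$ to get $|b_i-a_i|\ll T/\lambda_i$, and count the $\ll 1+T/\lambda_i$ choices for each coordinate.
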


For lattices $\Lambda \subseteq \Gamma \subseteq \R^n$, we call $\Lambda$ \emph{primitive in $\Gamma$} if 
\[
\mathrm{span}_{\R}(\Lambda) \cap \Gamma = \Lambda.
\]
We call a lattice \emph{primitive} if it is primitive in $\Z^n$. 
For a primitive sublattice $\Lambda \subseteq \Gamma$,  denote by $\pi_{\Lambda}: \R^n \to \Lambda^\perp$ the projection onto the orthogonal complement of $\Lambda$.
Then the \emph{factor lattice} $\Gamma / \Lambda$ is defined as (see e.g.~\cite[{\S 2}]{schmidt1968}) 
\begin{equation}\label{eq:factor_lattice}
      \Gamma / \Lambda = \pi_{\Lambda}(\Gamma).
\end{equation}
Since $\Lambda$ is primitive in $\Gamma$, this definition of $\Gamma / \Lambda$ indeed yields a lattice inside the vector space $\Lambda^\perp$ with $\mathrm{rank}(\Gamma / \Lambda)=\mathrm{rank}(\Gamma)-\mathrm{rank}(\Lambda)$. 

We have the identity 
\begin{equation}\label{eq:factorlatticeid}
       \det(\Lambda) \det(\Gamma / \Lambda) = \det(\Gamma).
\end{equation}
The following result explains that primitive sublattices remain primitive after passing to a factor lattice. 

\begin{lemma}\label{lem:primitivityfactsetc}
    Let $\Sigma \subseteq \Lambda \subseteq \Gamma$ be lattices. Assume that $\Sigma$ is primitive in $\Lambda$ and that $\Lambda$ is primitive in $\Gamma$. Then $\Sigma$ is primitive in $\Gamma$ and $\Lambda / \Sigma$ is primitive in $\Gamma / \Sigma$. 
\end{lemma}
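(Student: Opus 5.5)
The plan is to verify both claims directly from the definitions, treating primitivity as the condition $\mathrm{span}_\R(\cdot)\cap(\text{ambient lattice}) = (\cdot)$. Throughout, write $V_\Sigma=\mathrm{span}_\R(\Sigma)$ and $V_\Lambda=\mathrm{span}_\R(\Lambda)$, so that $V_\Sigma\subseteq V_\Lambda$.

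\emph{First claim.} We have
\[
V_\Sigma\cap\Gamma = V_\Sigma\cap V_\Lambda\cap\Gamma = V_\Sigma\cap\Lambda = \Sigma .
\]
The first equality holds because $V_\Sigma\subseteq V_\Lambda$. The second uses that $\Lambda$ is primitive in $\Gamma$. The third uses that $\Sigma$ is primitive in $\Lambda$. Hence $\Sigma$ is primitive in $\Gamma$, and in particular the factor lattices $\Gamma/\Sigma=\pi_\Sigma(\Gamma)$ and $\Lambda/\Sigma=\pi_\Sigma(\Lambda)$ are well defined by \eqref{eq:factor_lattice}.

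\emph{Second claim.} First I would note that $\mathrm{span}_\R(\pi_\Sigma(\Lambda))=\pi_\Sigma(V_\Lambda)$. Let $\mathbf v\in\pi_\Sigma(V_\Lambda)\cap\pi_\Sigma(\Gamma)$, so that $\mathbf v=\pi_\Sigma(\x)$ for some $\x\in\Gamma$ and $\mathbf v=\pi_\Sigma(\y)$ for some $\y\in V_\Lambda$. Then
\[
\x-\y\in\ker\pi_\Sigma=V_\Sigma\subseteq V_\Lambda ,
\]
and therefore $\x\in V_\Lambda\cap\Gamma=\Lambda$ by primitivity of $\Lambda$ in $\Gamma$. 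Consequently $\mathbf v=\pi_\Sigma(\x)\in\pi_\Sigma(\Lambda)=\Lambda/\Sigma$. This gives
\[
\pi_\Sigma(V_\Lambda)\cap(\Gamma/\Sigma)\subseteq\Lambda/\Sigma .
\]
The reverse inclusion is trivial, so $\Lambda/\Sigma$ is primitive in $\Gamma/\Sigma$.

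There is no real obstacle here. The only point needing care is the identification $\ker\pi_\Sigma=V_\Sigma$ together with $V_\Sigma\subseteq V_\Lambda$, which is what lets us lift a point of the projected span back into $V_\Lambda$.
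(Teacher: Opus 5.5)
Your proof is correct and complete. The paper does not prove this lemma; it states it as a standard fact about factor lattices. Your direct check from the definition $\mathrm{span}_\R(\cdot)\cap(\text{ambient lattice})=(\cdot)$ supplies the verification that is left to the reader. Your key step uses $\ker\pi_\Sigma=\mathrm{span}_\R(\Sigma)\subseteq\mathrm{span}_\R(\Lambda)$ to lift a point of the projected span back into $\mathrm{span}_\R(\Lambda)\cap\Gamma=\Lambda$. As you noted, the second claim needs only the primitivity of $\Lambda$ in $\Gamma$ for the set identity. Primitivity of $\Sigma$ (in $\Lambda$, hence in $\Gamma$ by the first claim) is used only to make sure that $\Lambda/\Sigma$ and $\Gamma/\Sigma$ are well-defined lattices.
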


\subsection{Symplectic geometry} We recall a few standard definitions from symplectic geometry. With $\Omega$ as in \eqref{eq:def_syplectic_matrix}, we define the \emph{standard symplectic form} as 
\begin{equation}\label{def:omega}
    \omega: \R^{2m} \times \R^{2m} \to \R, \quad (\x,\y) \mapsto \x^t \Omega \, \y.
\end{equation}
For any set $\mathcal{S} \subseteq \R^{2m}$, we define the \emph{symplectic complement} of $\mathcal{S}$ as 
\begin{equation*}
        \mathcal{S}^{\perp_\omega} = 
    \{ 
    \x \in \R^{2m}:~\omega(\x,\y) = 0 \text{ for all }\y \in \mathcal{S}
    \}.
\end{equation*}
We call a subspace $V \subseteq \R^{2m}$ \emph{isotropic} if
\[
   V \subseteq  V^{\perp_\omega},
\]
and we call $V$ \emph{Lagrangian} if
\[
    V=V^{\perp_\omega}.
\]
By linear algebra, 
\begin{equation}\label{eq:dimorthogonalcomplement}
        \dim(V^{\perp_\omega}) = 2m - \dim(V),
\end{equation}
so an isotropic subspace of $\R^{2m}$ is Lagrangian if and only if it is $m$-dimensional.
We call a lattice $\Lambda \subseteq \R^{2m}$ \emph{isotropic},
respectively \emph{Lagrangian}, if
$\mathrm{span}_{\R}(\Lambda)$ is isotropic, respectively Lagrangian.

For a primitive isotropic lattice $\Sigma$ of rank $q \le m$, we set 
\begin{equation}\label{eq:def_gamma_sigma}
     \Gamma_\Sigma = \Sigma^{\perp_\omega} \cap \Z^{2m}.
\end{equation}
Clearly $ \Gamma_\Sigma $ is a primitive lattice, and since $\Sigma^{\perp_\omega}$
is a rational subspace, we get
\[
\rank(\Gamma_\Sigma) = \dim(\Sigma^{\perp_\omega}) = 2m-\rank(\Sigma) =2m-q. \] Since $\Sigma$ is isotropic, it satisfies the inclusion $\Sigma \subseteq \Gamma_\Sigma$. Now, because 
\[
    \Sigma \subseteq 
    \mathrm{span}_{\R}(\Sigma) \cap \Gamma_\Sigma  \subseteq 
    \mathrm{span}_{\R}(\Sigma) \cap \Z^{2m} = \Sigma,
\]
the lattice $\Sigma$ is primitive in $\Gamma_\Sigma$. 
As described in \eqref{eq:factor_lattice}, the projection $\pi_{\Sigma}(\Gamma_\Sigma)$
yields the factor lattice $\Gamma_\Sigma / \Sigma$ and 
\[
\rank (\Gamma_\Sigma / \Sigma) = \rank(\Gamma_\Sigma) - \rank(\Sigma) = 2(m-q).
\]
 
\begin{lemma}
    The form $\omega$ descends to a form 
\[
    \overline{\omega}: \Gamma_\Sigma / \Sigma \times \Gamma_\Sigma / \Sigma \to \mathbb{R}, \qquad  \overline{\omega}(\pi_{\Sigma}(\x_1), \pi_{\Sigma}(\x_2)) = \omega (\x_1,\x_2).
\]
 The resulting form  $\overline{\omega}$ is \emph{non-degenerate}, that is, for $\x \in \Gamma_\Sigma$ the relation
\[
    \overline{\omega}(\pi_{\Sigma}(\x),\pi_{\Sigma}(\y)) = 0 \quad \text{for all } \y \in \Gamma_\Sigma
\]
implies that $\pi_{\Sigma}(\x)=\mathbf{0}$.
\end{lemma}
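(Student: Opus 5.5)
We need to prove two things: that the form $\overline{\omega}$ is well defined on $\Gamma_\Sigma/\Sigma$, and that it is non-degenerate in the stated sense.

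\emph{Well-definedness.} Suppose $\x_1,\x_1' \in \Gamma_\Sigma$ have the same image, $\pi_\Sigma(\x_1)=\pi_\Sigma(\x_1')$. Then $\x_1-\x_1' \in \ker \pi_\Sigma = \mathrm{span}_\R(\Sigma)$. For any $\x_2\in\Gamma_\Sigma \subseteq \Sigma^{\perp_\omega}$ we get $\omega(\x_1-\x_1',\x_2)=0$, because $\omega$ is bilinear and $\x_2$ is $\omega$-orthogonal to $\mathrm{span}_\R(\Sigma)$. The same argument, with antisymmetry of $\omega$, handles the second slot. Hence $\overline{\omega}$ is a well-defined bilinear form on $\Gamma_\Sigma/\Sigma$. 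Extended by linearity, it is also a well-defined form on $V:=\pi_\Sigma(\Sigma^{\perp_\omega})$.

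\emph{Non-degeneracy.} Let $\x\in\Gamma_\Sigma$ satisfy $\omega(\x,\y)=0$ for all $\y\in\Gamma_\Sigma$. The subspace $\Sigma^{\perp_\omega}$ is rational, and $\Gamma_\Sigma=\Sigma^{\perp_\omega}\cap\Z^{2m}$ has full rank $2m-q$ in it. So $\Gamma_\Sigma$ spans $\Sigma^{\perp_\omega}$ over $\R$, and by linearity
\[
\omega(\x,\y)=0 \quad \text{for all } \y\in \Sigma^{\perp_\omega}.
\]
This says $\x\in(\Sigma^{\perp_\omega})^{\perp_\omega}$. Since $\omega$ is non-degenerate on $\R^{2m}$ ($\Omega$ is invertible), \eqref{eq:dimorthogonalcomplement} applied twice gives
\[
\dim (\Sigma^{\perp_\omega})^{\perp_\omega} = 2m-(2m-q) = q.
\]
We also have the obvious inclusion $\mathrm{span}_\R(\Sigma)\subseteq(\Sigma^{\perp_\omega})^{\perp_\omega}$, and both spaces have dimension $q$. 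Hence $(\Sigma^{\perp_\omega})^{\perp_\omega}=\mathrm{span}_\R(\Sigma)$. Therefore $\x\in\mathrm{span}_\R(\Sigma)$, which gives $\pi_\Sigma(\x)=\mathbf 0$.

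The only step requiring care is the passage from testing against the lattice $\Gamma_\Sigma$ to testing against the whole real subspace $\Sigma^{\perp_\omega}$. This is exactly where rationality of $\Sigma^{\perp_\omega}$, and hence the rank computation $\rank(\Gamma_\Sigma)=2m-q$, is used.
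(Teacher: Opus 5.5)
Your proof is correct and follows the paper's argument. Well-definedness comes from bilinearity together with $\Gamma_\Sigma\subseteq\Sigma^{\perp_\omega}$, and non-degeneracy from $\x\in\Gamma_\Sigma^{\perp_\omega}=(\Sigma^{\perp_\omega})^{\perp_\omega}=\mathrm{span}_\R(\Sigma)$. The only differences are minor: you prove the double-complement identity by a dimension count, and you handle translates by all of $\mathrm{span}_\R(\Sigma)$ directly, where the paper works with translates by $\Sigma$ and implicitly relies on the primitivity of $\Sigma$.
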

\begin{proof}
     For any $\boldsymbol{\sigma}_1, \boldsymbol{\sigma}_2 \in \Sigma$ and any $\x_1, \x_2 \in \Gamma_\Sigma$, the bilinearity of $\omega$ gives
\[
    \omega(\x_1+\boldsymbol{\sigma}_1, \x_2+\boldsymbol{\sigma}_2)
    =
    \omega(\x_1 , \x_2)
    +
    \omega(\x_1, \boldsymbol{\sigma}_2) +
    \omega(\boldsymbol{\sigma}_1, \x_2)
    + 
    \omega(\boldsymbol{\sigma}_1, \boldsymbol{\sigma}_2) = \omega(\x_1, \x_2),
\]
so $\overline{\omega}$ is well-defined.  
It remains to prove that $\overline{\omega}$ is non-degenerate. The relation
\[
    \overline{\omega}(\pi_{\Sigma}(\x),\pi_{\Sigma}(\y)) = \omega (\x,\y) = 0 \quad \text{for all } \y \in \Gamma_\Sigma
\]
implies that $\x \in \Gamma_\Sigma^{\perp_\omega}$. 
By the definition of $\Gamma_\Sigma$ and \eqref{eq:dimorthogonalcomplement}, we have
$
    \mathrm{span}_{\R}(\Gamma_\Sigma) = \Sigma^{\perp_\omega}
$
and therefore
\[
\Gamma_\Sigma^{\perp_\omega} = \big( 
\Sigma^{\perp_\omega}
\big)^{\perp_{\omega}} = \mathrm{span}_{\R}(\Sigma);
\]
 thus $\pi_{\Sigma}(\x) = \mathbf{0}$.
\end{proof}

\begin{lemma}\label{lem:lifting}
    Let $\Xi \subseteq \Gamma_\Sigma / \Sigma$ be a sublattice that is primitive in $\Gamma_\Sigma / \Sigma$, and set 
    $
    \Xi^{\uparrow} = \pi_{\Sigma}^{-1}\left(\Xi
    \right) \cap \Gamma_\Sigma.
    $
    Then 
    \begin{enumerate}
        \item $ \Xi^{\uparrow}  $ is a primitive lattice of rank $\rank(\Sigma)+\rank(\Xi)$ in $\Z^{2m}$, 
        \item $\det(  \Xi^{\uparrow} ) = \det( \Xi ) \det( \Sigma )$, and
        \item if $\Xi$ is isotropic with respect to $\overline{\omega}$, then $ \Xi^{\uparrow} $ is isotropic with respect to $\omega$.
    \end{enumerate}
\end{lemma}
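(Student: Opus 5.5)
The plan is to work with the restricted projection $\pi_\Sigma|_{\Gamma_\Sigma}\colon \Gamma_\Sigma \to \Gamma_\Sigma/\Sigma$. This is a surjective group homomorphism onto the factor lattice, and its kernel is $\mathrm{span}_\R(\Sigma)\cap\Gamma_\Sigma=\Sigma$, which uses the primitivity of $\Sigma$ in $\Gamma_\Sigma$ established above. Hence $\Xi^\uparrow$, the full preimage of $\Xi$ in $\Gamma_\Sigma$, is a subgroup of $\Gamma_\Sigma$ containing $\Sigma$, and it sits in an exact sequence
\[
0\to\Sigma\to\Xi^\uparrow\to\Xi\to 0 .
\]
It is a discrete subgroup of $\Z^{2m}$, hence a lattice. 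Its rank is $\rank(\Sigma)+\rank(\Xi)$, and $\Xi^\uparrow/\Sigma\cong\Xi$, so $\Sigma$ is primitive in $\Xi^\uparrow$ and $\Xi^\uparrow/\Sigma=\pi_\Sigma(\Xi^\uparrow)=\Xi$ in the sense of \eqref{eq:factor_lattice}.

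For the primitivity in (1), let $V=\mathrm{span}_\R(\Xi^\uparrow)$ and take $\x\in V\cap\Z^{2m}$. We have $V\subseteq\Sigma^{\perp_\omega}$, because $\Xi^\uparrow\subseteq\Gamma_\Sigma$. So $\x\in\Sigma^{\perp_\omega}\cap\Z^{2m}=\Gamma_\Sigma$. Also $\pi_\Sigma(V)=\mathrm{span}_\R(\Xi)$, since $\pi_\Sigma$ is linear and $\pi_\Sigma(\Xi^\uparrow)=\Xi$. Therefore $\pi_\Sigma(\x)\in\mathrm{span}_\R(\Xi)\cap(\Gamma_\Sigma/\Sigma)=\Xi$, by primitivity of $\Xi$, and so $\x\in\Xi^\uparrow$. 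This yields $V\cap\Z^{2m}=\Xi^\uparrow$.

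For (2), I would apply \eqref{eq:factorlatticeid} to the primitive sublattice $\Sigma\subseteq\Xi^\uparrow$. This gives $\det(\Xi^\uparrow)=\det(\Sigma)\det(\Xi^\uparrow/\Sigma)=\det(\Sigma)\det(\Xi)$, using the identification $\Xi^\uparrow/\Sigma=\Xi$ from the first paragraph. A concrete check: pick a basis of $\Sigma$ and lifts of a basis of $\Xi$. Gram--Schmidt in this order shows the Gram determinant factorises, since projecting the lifts orthogonally to $\Sigma$ gives exactly the basis of $\Xi$.

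For (3), take $\x,\y\in\Xi^\uparrow\subseteq\Gamma_\Sigma$. By the definition of $\overline{\omega}$ from the preceding lemma,
\[
\omega(\x,\y)=\overline{\omega}(\pi_\Sigma(\x),\pi_\Sigma(\y))=0,
\]
since $\pi_\Sigma(\x),\pi_\Sigma(\y)\in\Xi$ and $\Xi$ is $\overline{\omega}$-isotropic. Bilinearity then extends this vanishing to the real span, so $\Xi^\uparrow$ is isotropic.

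The only delicate point is the bookkeeping in (1): the real span of the preimage must project exactly onto the span of $\Xi$, and integral vectors in that span must automatically lie in $\Gamma_\Sigma$. Both follow from the linearity of $\pi_\Sigma$ and from $V\subseteq\Sigma^{\perp_\omega}$.
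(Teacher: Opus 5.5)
Your proposal is correct and follows essentially the same route as the paper. Primitivity comes from pulling back the primitivity of $\Xi$ through $\pi_\Sigma$, the determinant from the factor-lattice identity with $\Xi^\uparrow/\Sigma=\Xi$, and isotropy directly from the definition of $\overline{\omega}$. The only cosmetic differences are that you prove primitivity in $\Z^{2m}$ directly, using $\mathrm{span}_\R(\Xi^\uparrow)\subseteq\Sigma^{\perp_\omega}$, instead of proving it in $\Gamma_\Sigma$ and invoking Lemma~\ref{lem:primitivityfactsetc}, and that you spell out the rank count via the exact sequence $0\to\Sigma\to\Xi^\uparrow\to\Xi\to0$.
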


\begin{proof}
In order to prove (1), it suffices by Lemma~\ref{lem:primitivityfactsetc} to prove that $\Xi^\uparrow$ is primitive in the primitive lattice $\Gamma_\Sigma$. Assume that 
\[
\x \in \mathrm{span}_{\R}(  \Xi^{\uparrow} ) \cap \Gamma_\Sigma .
\]
Then, by the primitivity of $\Xi$, we have   
\[
    \pi_{\Sigma}(\x) \in \mathrm{span}_{\R}(\Xi) \cap (\Gamma_\Sigma / \Sigma) = \Xi,
\]
and thus $\x \in \pi_{\Sigma}^{-1}(\Xi) \cap \Gamma_\Sigma = \Xi^\uparrow$. This proves that 
\[
    \Xi^\uparrow = \mathrm{span}_{\R}(  \Xi^{\uparrow} ) \cap \Gamma_\Sigma ,
\]
so $ \Xi^\uparrow$ is primitive. Moreover, since $\Sigma\subseteq \Xi^\uparrow$ and
$\pi_\Sigma(\Xi^\uparrow)=\Xi$, we have
$
\Xi^\uparrow/\Sigma = \Xi.
$
Hence
\[
\rank(\Xi^\uparrow)
=
\rank(\Sigma)+\rank(\Xi).
\]
 Statement (2) follows directly from \eqref{eq:factorlatticeid}. 
Statement (3) follows from the fact that $\Xi= \Xi^\uparrow / \Sigma$ and the definition  $\omega(\x,\y) = \overline{\omega}(\pi_{\Sigma}(\x),\pi_{\Sigma}(\y)) $.
\end{proof}

\begin{lemma}
    Every nonzero $\x \in \Gamma_\Sigma / \Sigma$ satisfies
   $
    |\x| \ge (\sqrt{2m}\det(\Sigma))^{-1}.
    $
\end{lemma}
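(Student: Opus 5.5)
Every nonzero $\x \in \Gamma_\Sigma/\Sigma$ has the form $\x = \pi_\Sigma(\y)$ with $\y \in \Gamma_\Sigma \subseteq \Z^{2m}$ and $\y \notin \mathrm{span}_\R(\Sigma)$. The plan is to bound the Euclidean length $\|\x\|_2$ from below by a volume argument, and then pass to the maximum norm via $\|\x\|_2 \le \sqrt{2m}\,|\x|$. This conversion is exactly where the factor $\sqrt{2m}$ in the statement comes from.

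First consider the lattice $\Sigma' = \Sigma + \Z\y$. It has rank $q+1$, since $\y \notin \mathrm{span}_\R(\Sigma)$, and it is contained in $\Z^{2m}$. Its determinant is computed by Gram--Schmidt with respect to a basis of $\Sigma$ followed by $\y$:
\[
\det(\Sigma') = \det(\Sigma)\,\|\pi_\Sigma(\y)\|_2 = \det(\Sigma)\,\|\x\|_2 .
\]
This is also the identity \eqref{eq:factorlatticeid} applied to $\Sigma$ inside $\mathrm{span}_\R(\Sigma')\cap\Z^{2m}$, but the Gram--Schmidt formula suffices directly and needs no primitivity.

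Next, the key input is that any nonzero integral lattice $L \subseteq \Z^{2m}$ satisfies $\det(L) \ge 1$. Indeed, $\det(L)^2 = \det(B_L^t B_L)$ is a positive integer, because $B_L$ has integer entries and full column rank. Applying this to $\Sigma'$ gives
\[
\det(\Sigma)\,\|\x\|_2 \ge 1 .
\]

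Finally, combining with $\|\x\|_2 \le \sqrt{2m}\,|\x|$ yields
\[
|\x| \ge \frac{\|\x\|_2}{\sqrt{2m}} \ge \frac{1}{\sqrt{2m}\,\det(\Sigma)},
\]
which is the claim. The only point needing care is the Gram--Schmidt determinant identity for $\Sigma+\Z\y$. It follows from writing the Gram matrix in block form, or from the fact that the volume of the parallelepiped equals the base volume times the height, where the height is the orthogonal component $\pi_\Sigma(\y)$. There is no serious obstacle.
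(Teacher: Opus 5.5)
Your proof is correct and matches the paper's argument: you form the integer lattice generated by $\Sigma$ and a lift $\y$ of $\x$, use $\det(\Sigma+\Z\y)=\det(\Sigma)\|\x\|_2\ge 1$, and convert to the maximum norm via $\|\x\|_2\le\sqrt{2m}\,|\x|$. You also spell out why an integral lattice has determinant at least $1$ and why the Gram--Schmidt determinant identity holds, which the paper leaves implicit.
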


\begin{proof}
    Let $\mathbf{0} \neq \x \in \Gamma_\Sigma / \Sigma$. Since $\x$ lies in the orthogonal complement of $\Sigma$, the lattice $\Gamma$ generated by the union of $\Sigma$ and $\y \in \pi_{\Sigma}^{-1}(\x) \cap \Z^{2m}$ has determinant 
    \[
        \det(\Gamma) = \det(\Sigma) \| \x \|_2.
    \]
    Since $\Gamma$ is an integer lattice, $\det(\Gamma) \ge 1$. Thus 
    $
        \sqrt{2m}|\x| \ge \|\x\|_2 \ge \det(\Sigma)^{-1}. $
\end{proof}

\subsection{Rational points on generalised flag varieties}
\label{section:rationalpoints_fmt}

We recall a consequence of a result due to Franke--Manin--Tschinkel on the distribution of rational points on generalised flag varieties, see \cite{franke_manin_tschinkel,franke_manin_tschinkel_erratum}.

Let $G$ be a semisimple linear algebraic group over $\Q$, let
$P\subseteq G$ be a $\Q$-rational parabolic subgroup and set $X = G/P$.
We denote by $H$ an anticanonical height on the set of rational points $X(\mathbb{Q})$; for any other anticanonical height $H_2$, we have $H \asymp H_2$. Franke--Manin--Tschinkel \cite[\S5]{franke_manin_tschinkel} prove that the associated height zeta function converges
for $\Re(s)>1$, which implies the upper bound
\begin{equation}\label{eq:rat_pts_upperbound_fmt}
\#\big\{
x\in X(\Q):
H(x)\le B
\big\}
\ll_{\varepsilon, X, H} B^{1+\varepsilon}.
\end{equation}
 The implicit constant may depend on $X$ and $H$ as well as on $\varepsilon$.

We will apply \eqref{eq:rat_pts_upperbound_fmt} in Section~\ref{sec:counting_isotropic_flags} to suitable symplectic flag varieties. There the relevant anticanonical height will be expressed explicitly in terms of determinants of primitive isotropic lattices.

We remark that Franke--Manin--Tschinkel actually prove an asymptotic formula for the number of points of bounded height. In several special cases, such asymptotics have also been obtained using methods from the geometry of numbers. Schmidt~\cite{schmidt1968} obtained asymptotic
formulas for rational points of bounded height on
Grassmannians over $\Q$, while Thunder~\cite{thunder_flags} 
obtained asymptotics for flag
varieties over number fields. More recently, using methods from homogeneous
dynamics,
Horesh--Karasik~\cite{horesh_karasik_flags} proved asymptotic formulas with respect to geometrical properties for flags over $\Q$. 

\subsection{Counting lattice points in Lie groups}\label{section:prelim_lie}
We present a result of Maucourant~\cite{maucourant}. The material in this subsection is used only in Section~\ref{section:asymptotic}, where we deduce Theorem~\ref{theorem:asymptotic}. Maucourant's result applies to connected, noncompact, semisimple real Lie groups
$G$ with finite centre containing an irreducible lattice $\Gamma$.
We will specialise to $\Sp_m(\R)$ in Section~\ref{section:asymptotic}.
 
    Let $\mathfrak{g}$ denote the Lie algebra of $G$, let $[\cdot,\cdot]: \mathfrak{g} \times \mathfrak{g} \to \mathfrak{g}$ denote the Lie bracket, and denote by  $\exp: \mathfrak{g} \to G$ the exponential map. Let $\theta: \mathfrak{g} \to \mathfrak{g}$ be a \emph{Cartan involution}, that is, a Lie algebra automorphism such that $\theta \circ \theta = \mathrm{id}$ and such that the bilinear form
\[
B_\theta(X,Y):=-B(X,\theta(Y))
\]
is positive definite, where $B$ denotes the Killing form of $\mathfrak{g}$.
Then $\theta$ gives rise to a \emph{Cartan decomposition}
$
\mathfrak{g}=\mathfrak{k}\oplus\mathfrak{p},
$
where 
\[
    \mathfrak{k} = 
    \{
    X \in \mathfrak{g}:~\theta(X)=X 
    \}, \qquad 
    \mathfrak{p} = 
    \{
    X \in \mathfrak{g}:~\theta(X)=-X 
    \}.
\]

Let $\mathfrak{a}\subseteq\mathfrak{p}$ be a \emph{maximal abelian subspace}, that is,
$
    [X,Y] = 0 \text{ for all $X,Y \in \mathfrak{a}$}
$
and $\mathfrak{a}$ is inclusion-maximal among subspaces of $\mathfrak{p}$ with this property. 
For a linear functional
$
 \alpha:\mathfrak a\to\mathbb {R} 
$
in the dual space $\mathfrak{a}^\star$,
we define 
\[
    \mathfrak{g}_\alpha = 
    \{X \in \mathfrak{g}: [H,X]=\alpha(H)X  \text{ for all } H \in \mathfrak{a}
    \}.
\]
A nonzero functional $\alpha$ is called a \emph{restricted root} of \emph{multiplicity} $m_\alpha$ if $m_\alpha := \dim (\mathfrak{g}_\alpha)\ge 1$. The restricted roots occur in pairs $\{\alpha,-\alpha\}$. A choice
$\mathcal R^+$ of \emph{positive restricted roots} may be obtained by
choosing a regular element $H_0\in\mathfrak a$, that is, an element
such that
$
\alpha(H_0)\neq0
$
for every restricted root $\alpha$, and setting
$
\mathcal R^+
=
\{\alpha:\alpha(H_0)>0\}.
$
We then set
\begin{equation*}
    \beta
    =
    \sum_{\alpha\in\mathcal R^+}m_\alpha\alpha.
\end{equation*}

Let
    $
    \rho: G \to \GL (V)
    $
    be a faithful finite-dimensional real representation.
Following Maucourant \cite[\S\S~2.1 and~2.3]{maucourant}, let
$\chi_1,\ldots,\chi_n \in \mathfrak{a}^\star$ denote the weights of
the representation $\rho$. We choose a basis
of $V$ such that
\[
\rho(\exp(H))
=
\operatorname{diag}\bigl(
e^{\chi_1(H)},\ldots,e^{\chi_n(H)}
\bigr)
\]
for every $H \in \mathfrak{a}$ and define the \emph{weight polytope} $\mathcal{C}_\rho$ as the convex hull of these weights, i.e.~as
$
    \mathcal{C}_\rho = \mathrm{conv}(\chi_1,\ldots,\chi_n).
$
Under these hypotheses (see \cite[Lemma~2.1]{maucourant}), $\mathbf{0} \in \mathfrak{a}^\star$ lies in the
interior of $ \mathcal{C}_\rho $. Let $d>0$ be the smallest positive real number such that
\begin{equation}\label{eq:def_d}
    d^{-1}\beta\in\mathcal C_\rho.
\end{equation}
By the minimality of $d$, $d^{-1}\beta$ lies on the boundary $\partial  \mathcal{C}_\rho$. 
Let $F_\beta$ be the smallest face of 
$\mathcal C_\rho$ containing $d^{-1}\beta$, and set
\begin{equation}\label{eq:def_e}
    e=\operatorname{codim}_{\mathfrak{a}^\star}(F_\beta)-1.
\end{equation}

Maucourant's lattice-point counting result now takes the following form.

\begin{theorem}[{\cite[Corollary~1.1]{maucourant}}] \label{thm:mau}
    Let $\| \cdot \|$ be any norm on the set of endomorphisms $\mathrm{End}(V)$. Then there is a positive constant $a>0$ such that 
    \[
    \# \{ 
    \gamma \in \Gamma:~\|\rho(\gamma)\| \le T 
    \} \sim a T^d (\log T)^e
    \]
    as $T \to \infty$, 
    where $d$ is defined in \eqref{eq:def_d} and $e$ is defined in \eqref{eq:def_e}.
\end{theorem}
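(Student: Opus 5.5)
The plan is the standard two-step route for lattice point counting on semisimple groups. First, equidistribution (via mixing) reduces the count to the Haar volume of the norm balls $G_T=\{g\in G:\ \|\rho(g)\|\le T\}$. Second, the volume asymptotic is computed from the Cartan decomposition, and this is where $d$ and $e$ appear.

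\emph{Step 1 (counting versus volume).} Let $\mu$ be Haar measure on $G$, normalised so that $\Gamma\backslash G$ has volume $1$. I would apply the Duke--Rudnick--Sarnak / Eskin--McMullen principle. Since $G$ is semisimple with finite centre, noncompact, and $\Gamma$ is irreducible, Howe--Moore gives mixing of the $G$-action on $L^2(\Gamma\backslash G)$. Mixing yields
\[
\#(\Gamma\cap G_T)\sim \mu(G_T)
\]
for any family $G_T$ that is \emph{well rounded}. This means: for every $\varepsilon>0$ there is a neighbourhood $\mathcal O$ of the identity with $\mu(\mathcal O G_T\mathcal O)\le (1+\varepsilon)\mu\bigl(\bigcap_{h_1,h_2\in\mathcal O}h_1G_Th_2\bigr)$ for all large $T$. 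For norm balls this is easy. Since $\|\rho(h_1gh_2)\|\le \|\rho(h_1)\|_{op}\|\rho(g)\|\|\rho(h_2)\|_{op}$, we get $\mathcal O G_T\mathcal O\subseteq G_{(1+\delta)T}$ and $G_{(1-\delta)T}\subseteq\bigcap h_1G_Th_2$. So well-roundedness reduces to $\mu(G_{(1+\delta)T})/\mu(G_{(1-\delta)T})\to 1$ as $\delta\to0$, uniformly in large $T$. This follows once we know $\mu(G_T)\sim aT^d(\log T)^e$, and the constant $a$ is then this volume constant.

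\emph{Step 2 (volume via Cartan decomposition).} Write $G=K\exp(\overline{\mathfrak a^+})K$. The Haar measure then has density on $\mathfrak a^+$ comparable to $\prod_{\alpha\in\mathcal R^+}\sinh(\alpha(H))^{m_\alpha}$. This equals $2^{-\sum m_\alpha}e^{\beta(H)}(1+o(1))$ away from the walls. With the basis in which $\rho(\exp H)$ is diagonal, $\|\rho(k_1\exp(H)k_2)\|$ is bounded above and below by constant multiples of $\max_i e^{\chi_i(H)}$, uniformly in $k_1,k_2\in K$. The support function $h(H)=\max_i\chi_i(H)$ is the gauge dual to $\mathcal C_\rho$, and it is positive away from $0$ because $\mathbf 0$ is interior to $\mathcal C_\rho$. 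Thus $\mu(G_T)$ is, up to constants, $\int_{\{H\in\mathfrak a^+:\ h(H)\le \log T\}}e^{\beta(H)}\,dH$. Maximising the linear form $\beta$ on the polytope $\{h\le t\}$ gives the value $d\,t$, by duality and the definition \eqref{eq:def_d}. The set of maximisers is a face of dimension $\operatorname{codim}F_\beta-1=e$. A Laplace-type computation over a polyhedral region then gives $\asymp e^{dt}t^{e}$ with $t=\log T$, hence $T^d(\log T)^e$. One must also check that the maximising face meets the interior of $\mathfrak a^+$ suitably, or handle the wall contributions; this uses the Weyl-group invariance of $\mathcal C_\rho$ and the fact that $\beta$ is dominant.

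\emph{Main obstacle.} The delicate part is sharpening $\asymp$ to a genuine asymptotic $\sim aT^d(\log T)^e$ for the given norm. The norm is only comparable to $\max_i e^{\chi_i(H)}$, not equal to it. To handle this I would write $\log\|\rho(k_1 e^Hk_2)\|=h(H)+\phi(k_1,k_2,H)+o(1)$ for $H$ deep in the relevant cone near the maximising face. Here $\phi$ depends only on $k_1,k_2$ and the face directions. Then integrate over $K\times K$ using dominated convergence: the leading term comes from a neighbourhood of the face $F_\beta$ with its $e$-dimensional family of directions, each contributing $e^{dt}$. With the volume asymptotic in hand, Step 1 gives the theorem.
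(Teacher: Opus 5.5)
The paper gives no proof of this statement; it is quoted from Maucourant~\cite{maucourant} and used as a black box in Section~\ref{section:asymptotic}. Your proposal is correct. It is the standard argument, and it has the same two-step structure as Maucourant's proof. First, an asymptotic for the Haar volume of the norm balls is computed through the Cartan decomposition. Second, the lattice count is deduced by the Duke--Rudnick--Sarnak/Eskin--McMullen mixing argument, with well-roundedness a consequence of the volume asymptotic itself. The only real difference is how you handle the dependence on the norm. Maucourant shows that the push-forward of Haar measure to $\mathrm{End}(V)$, rescaled by $T^{-1}$ and normalised by $T^d(\log T)^e$, converges to a homogeneous limit measure; this gives the volume asymptotic for all norms at once. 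Your expansion $\log\|\rho(k_1e^Hk_2)\|=h(H)+\phi+o(1)$ near the maximising cone computes that limit by hand.

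Three points need care when you write it out.

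(i) The maximisers of $\beta$ on $\{h\le t\}$ automatically lie in $\overline{\mathfrak a^+}$, because $h$ is Weyl-invariant and $\beta$ is strictly dominant. So the maximising set inside the chamber really has dimension $e$, but it need not meet the open chamber. In the case $\Sp_m(\R)$ used in the paper, it is the single point $t_1=\cdots=t_m=t$, which lies on the walls $t_k=t_{k+1}$. Only your second option is therefore available: keep the factors $\sinh(\alpha(\cdot))^{m_\alpha}$ for the roots vanishing on the maximising face. These then enter only the transverse integral, hence only the constant.

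(ii) The correction $\phi$ depends on $k_1,k_2$ and on the component of $H$ transverse to the cone over the face $\{H:\ h(H)=1,\ \beta(H)=d\}$ of the polar polytope in $\mathfrak a$. It does not involve $F_\beta\subseteq\mathfrak a^\star$ directly, and it does not depend on the position along that cone. Your dominated-convergence step needs the transverse integral of $e^{\beta(Y)-d\max_{\chi\in F_\beta}\chi(Y)}$ to converge. This holds precisely because $d^{-1}\beta$ lies in the relative interior of $F_\beta$, which is why $F_\beta$ is taken to be the smallest face containing it.

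(iii) For an arbitrary norm on $\mathrm{End}(V)$, the inequality $\|\rho(h_1gh_2)\|\le\|\rho(h_1)\|_{op}\|\rho(g)\|\|\rho(h_2)\|_{op}$ can fail. Use instead that the linear map $X\mapsto\rho(h_1)X\rho(h_2)$ has $\|\cdot\|$-operator norm tending to $1$ as $h_1,h_2$ tend to the identity; this gives the same sandwich $G_{(1-\delta)T}\subseteq\bigcap h_1G_Th_2$ and $\mathcal O G_T\mathcal O\subseteq G_{(1+\delta)T}$.
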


\section{Counting isotropic configurations}\label{sec:counting_isotropic_flags}

We establish upper bounds for the number of certain isotropic configurations that will be essential for the proof of Theorem~\ref{thm:upperbound}. 

In Section~\ref{section:fmt_lattices}, we count isotropic lattices with successive minima contained in prescribed dyadic ranges. More concretely, we count pairs $(\Sigma, \Xi)$, where $\Sigma$ is a primitive isotropic lattice in $\Z^{2m}$ and $\Xi$ is a primitive isotropic lattice contained in $\Gamma_\Sigma/\Sigma$ (with $\Gamma_\Sigma$ defined in \eqref{eq:def_gamma_sigma}). As explained in the proof-strategy discussion in the introduction, we encode the short directions of the lattice generated by the first $m$ columns of a symplectic matrix $U$ by a primitive isotropic lattice $\Sigma_U$. After passing to the factor lattice $\Gamma_{\Sigma_U}/\Sigma_U$, we will encounter auxiliary primitive isotropic lattices $\Xi$ generated by differences of projected columns.

In
Section~\ref{section:tuples}, we bound the number of isotropic tuples of integer vectors $(\x_1,\ldots,\x_r)$ with $\x_i \in \mathcal{D}_i$ as a function of the diameter of the sets $\mathcal{D}_i$, uniformly in their locations. Ultimately, this will allow us to bound the number of possible choices for the first $m$ columns of a symplectic matrix in $\mathfrak{B}_{\mathbf{K}}(T)$, and, later in the argument, to control the possible completions to a symplectic matrix.

\subsection{Counting isotropic lattices} \label{section:fmt_lattices}
In this section, we apply the rational-point estimate due to  Franke--Manin--Tschinkel \cite{franke_manin_tschinkel} from Section~\ref{section:rationalpoints_fmt} to symplectic flag varieties. 
A special case of our result (obtained by taking $s=0$ in Proposition~\ref{prop:countingflags} below) is the following.
\begin{corollary}\label{cor:succminima}
        Let $0\le q\le m$ be a nonnegative integer and let $1 \le \theta_1 \le \cdots \le \theta_q \le T$  be positive real numbers.
    Then the number of primitive isotropic lattices $\Sigma \subseteq \Z^{2m}$ of rank $q$ with successive minima $\sigma_1 \le \cdots \le \sigma_q $ satisfying $ \theta_i \le \sigma_i \le 2 \theta_i$ for $1\le i \le q$ is 
    \[
     \ll_\varepsilon T^\varepsilon \prod_{i=1}^{q} \theta_i^{2(m-i+1)} .
    \]
    The implicit constant only depends on $\varepsilon$ and $m$.
\end{corollary}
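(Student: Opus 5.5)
We encode each such lattice $\Sigma$ by a primitive isotropic flag and then apply the Franke--Manin--Tschinkel bound \eqref{eq:rat_pts_upperbound_fmt} to the corresponding symplectic partial flag variety.

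\textbf{Step 1: from lattices to flags.} Given $\Sigma$ with successive minima $\sigma_1\le\cdots\le\sigma_q$, choose the basis $\z_1,\dots,\z_q$ of Lemma~\ref{lem:basis_successive_min_size}. Set $\Sigma_k=\mathrm{span}_\R(\z_1,\dots,\z_k)\cap\Z^{2m}$ for $k=1,\dots,q$. Each $\Sigma_k$ is primitive and isotropic, being a sublattice of the isotropic $\Sigma$. Moreover $\Sigma_k$ is spanned by $\z_1,\dots,\z_k$: an element of $\Sigma_k$ lies in $\Sigma$, and Lemma~\ref{lem:basis_successive_min_size} together with primitivity of $\Sigma$ shows its coordinates vanish beyond index $k$. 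By Minkowski \eqref{eq:minkowski},
\[
\det(\Sigma_k)\asymp\sigma_1\cdots\sigma_k\asymp\theta_1\cdots\theta_k .
\]
The map $\Sigma\mapsto(\Sigma_1\subset\cdots\subset\Sigma_q)$ is injective, since $\Sigma_q=\Sigma$. It therefore suffices to count primitive isotropic full flags of length $q$ with $\det(\Sigma_k)\le c\,\theta_1\cdots\theta_k$ for each $k$.

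\textbf{Step 2: the anticanonical height.} The rational points of the isotropic flag variety $X=\Sp_{2m}/P$ parametrise these flags; here $P$ stabilises a standard isotropic flag of type $(1,2,\dots,q)$. By the standard description of heights on flag varieties via Plücker embeddings, the heights attached to the fundamental weights $\varpi_k$ ($k\le q$) are $H_{\varpi_k}\asymp\det(\Sigma_k)$. One then computes the anticanonical class $2\rho_P$, the sum of the roots in the unipotent radical of $P$, in terms of $\varpi_1,\dots,\varpi_q$. For the type $C_m$ root system we expect
\[
-K_X=\sum_{k=1}^{q} c_k\varpi_k,\qquad c_k=2\ (k<q),\quad c_q=2m-2q+2 .
\]
The check case is $q=1$: $X=\mathbb{P}^{2m-1}$ with $-K=2m\,\varpi_1$, consistent with $c_1=2m$.

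\textbf{Step 3: counting by height, and the main obstacle.} With these $c_k$, the height of a flag counted in Step 1 is
\[
H\asymp\prod_{k}\det(\Sigma_k)^{c_k}\ll\prod_k(\theta_1\cdots\theta_k)^{c_k}.
\]
Expanding, the exponent of $\theta_i$ is $\sum_{k\ge i}c_k=2(q-i)+2m-2q+2=2(m-i+1)$, so
\[
H\ll B:=\prod_{i=1}^q\theta_i^{2(m-i+1)} .
\]
Applying \eqref{eq:rat_pts_upperbound_fmt} gives $\ll_\varepsilon B^{1+\varepsilon}\ll T^{\varepsilon'}B$, since $B\le T^{O(1)}$; renaming $\varepsilon$ yields the corollary. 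The case $q=0$ is trivial.

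The main obstacle is Step 2: one must verify rigorously that the product of determinants with exponents $c_k$ is indeed an anticanonical height on $X$, up to $\asymp$. This requires the root computation for $\Sp_{2m}$ together with an identification of the metrised line bundles $\mathcal{O}(\varpi_k)$ with $\det$ of the $k$-th isotropic piece. The latter holds because $\varpi_k$ is the highest weight of $\wedge^k$ of the standard representation, and the flag embeds into $\prod_k\mathbb{P}(\wedge^k\Q^{2m})$.
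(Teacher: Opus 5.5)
Your proposal is correct and follows the paper's proof. The paper obtains the corollary as the case $s=0$ of Proposition~\ref{prop:countingflags}: it encodes $\Sigma$ by the flag $\mathrm{span}_\Z(\z_1,\ldots,\z_k)$ built from a reduced basis, bounds $\det(\Sigma_k)\ll\theta_1\cdots\theta_k$ by Minkowski, and applies \eqref{eq:rat_pts_upperbound_fmt} with exactly your anticanonical height $\prod_{k<q}\det(\Sigma_k)^2\det(\Sigma_q)^{2(m-q+1)}$. The step you flag as the main obstacle is simply cited there (the anticanonical class from Feigin--Finkelberg--Littelmann, and $H_{L_k}=\det(\Sigma_k)$ from Schmidt); your coefficients check out, since $2\rho_P=2\sum_{k\le q}(m-k+1)\varepsilon_k=\sum_{k\le q}c_k\varpi_k$ with $\varpi_k=\varepsilon_1+\cdots+\varepsilon_k$.
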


For comparison, Browning, Le~Boudec and Sawin \cite[Lemma~3.19]{browning_le_boudec_sawin} showed that the number of primitive rank-$q$ lattices $\Lambda \subseteq \Z^{2m}$ with successive minima $\lambda_1 \le \cdots \le \lambda_q$ in dyadic ranges $\lambda_i \in [\xi_i,2\xi_i]$ is 
 \[
    \ll
    \prod_{i=1}^q
    \xi_i^{2m+1+q-2i}.
 \]
 Adapting the argument below to their setting would produce the upper bound
 \[
    \ll_\varepsilon
    \xi_q^\varepsilon
    \prod_{i=1}^q
    \xi_i^{2m+1+q-2i},
 \]
 which is weaker by a factor of $\xi_q^\varepsilon$.

Let us specialise the results introduced in Section~\ref{section:rationalpoints_fmt} to our setting. For $1 \le r \le m$, the rational points of the 
\emph{isotropic flag variety} $\mathcal{F}_r^\omega$ are given as 
\[
    \mathcal{F}_r^\omega(\mathbb{Q}) = \{ 
    (V_1,\ldots,V_r)\!:\,
    V_1 \subset \cdots \subset V_r \subseteq \mathbb{Q}^{2m},\, \dim(V_i)=i,~V_i \text{ isotropic, $i=1,\ldots,r$} 
    \}.
\]
The group $\Sp_m$ acts transitively on $\mathcal{F}_r^\omega$;
the stabiliser $P$ of the standard isotropic flag is a
$\Q$-rational parabolic subgroup of $\Sp_m$, and hence
\[
    \mathcal{F}_r^\omega \simeq \Sp_m/P.
\]
For $i=1,\ldots,r$, we denote by 
\[
    p_i: \mathcal{F}_r^\omega \to \mathbf{Gr}(i,2m), \qquad (V_1,\ldots,V_r) \mapsto V_i
\]
the projection onto the Grassmannian variety $\mathbf{Gr}(i,2m)$ and we set 
$    L_i = p_i^*\mathcal{O}_{\mathbf{Gr}(i,2m)}(1),
$
where $\mathcal{O}_{\mathbf{Gr}(i,2m)}(1)$ denotes the Pl\"ucker line bundle on the Grassmannian $\mathbf{Gr}(i,2m)$. Then the  
anticanonical divisor on $\mathcal{F}_r^\omega$ (see, for instance, \cite[Lemma~7.6]{feigin_finkelberg_littelmann}, with $d_i=i$) is
\[
    -K_{\mathcal{F}_r^\omega} = 2 L_1 + \cdots + 2 L_{r-1} + 2(m-r+1)L_r,
\]
and, choosing the heights $H_{L_i}$ associated with the Euclidean
norm on the Pl\"ucker coordinates, the associated
anticanonical height of a flag $(V_1,\ldots,V_r)$ is 
\[
    H_{-K_{\mathcal{F}_r^\omega}}(V_1,\ldots,V_r) = \prod_{j=1}^{r-1}\left( H_{L_j}(V_j)^2 \right) \, H_{L_r}(V_r)^{2(m-r+1)}.
\]

There is a bijection between flags of rational isotropic vector spaces $ V_1 \subset \cdots \subset V_r$ with $\dim(V_i)=i$ for $i=1,\ldots,r$ and flags of  primitive isotropic lattices $\Sigma_1 \subset \cdots \subset \Sigma_r$ with $\rank(\Sigma_i)=i$ for $i=1,\ldots,r$; indeed, this correspondence is given by 
\[
    (V_1,\ldots,V_r) \mapsto (V_1 \cap \Z^{2m}, \ldots, V_r\cap \Z^{2m})=: (\Sigma_1,\ldots,\Sigma_r).
\]
We have
$
    H_{L_i}(V_i) = \det(\Sigma_i);
$
see \cite[Chapter~I, Corollary~5I]{schmidt_diophantine_equations}.
Therefore the anticanonical height on $\mathcal{F}_{r}^\omega$ is 
\begin{equation}\label{eq:height_on_isoflagvar}
     H_{-K_{\mathcal{F}_r^\omega}}(V_1,\ldots,V_r) = \prod_{j=1}^{r-1}\left( \det(\Sigma_j)^2 \right) \, \det(\Sigma_r)^{2(m-r+1)}.
\end{equation}
Since $\mathcal{F}_r^\omega \simeq \Sp_m/P$ is a generalised flag
variety of the type considered in Section~\ref{section:rationalpoints_fmt},
we may apply \eqref{eq:rat_pts_upperbound_fmt}, i.e.~the bound
\begin{equation*}
    \#\{\x\in \mathcal{F}_r^\omega(\Q): H_{-K_{\mathcal{F}_r^\omega}}(\x)\le B\}
    \ll_\varepsilon B^{1+\varepsilon}.
\end{equation*}
We are now in a position to prove the following.

\begin{proposition}\label{prop:countingflags}
    Let $0\le q\le m$ and $0\le s\le m-q$ be integers and let $1 \le \theta_1 \le \cdots \le \theta_q \le T$ and 
    $0 < \rho_1 \le \cdots \le \rho_s \le T$ be positive real numbers.
    Then the number of pairs of lattices $(\Sigma, \Xi)$ such that 
    \begin{enumerate}
        \item $\Sigma \subseteq \Z^{2m}$ is a primitive isotropic lattice of rank $q$ with successive minima $\sigma_1 \le \cdots \le \sigma_q $ such that $ \theta_i \le \sigma_i \le 2 \theta_i$ for $1\le i \le q$, and 
        \item $\Xi \subseteq \Gamma_\Sigma / \Sigma$, with $\Gamma_\Sigma$ defined in \eqref{eq:def_gamma_sigma}, is a primitive isotropic (with respect to $\overline{\omega}$) lattice of rank $s$ with successive minima $\xi_1 \le \cdots \le \xi_s $ such that $ \rho_i \le \xi_i \le 2 \rho_i$ for $1 \le i \le s$,
    \end{enumerate}
    can be bounded by 
    \[
    \# 
    \left\{
    (\Sigma, \Xi) \text{ with \emph{(1)} and \emph{(2)}} 
    \right\} \ll_\varepsilon T^\varepsilon \prod_{i=1}^{q} \theta_i^{2(m-i+1)} 
    \prod_{i=1}^{s} \rho_i^{2(m-q-i+1)}.
    \]
    The implicit constant only depends on $\varepsilon$ and $m$.
\end{proposition}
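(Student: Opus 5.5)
We encode each pair $(\Sigma,\Xi)$ as a rational point on the isotropic flag variety $\mathcal{F}_{q+s}^\omega$ and then apply \eqref{eq:rat_pts_upperbound_fmt} dyadically.

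\textbf{Step 1: the flag.} Let $\z_1,\ldots,\z_q$ be a basis of $\Sigma$ as in Lemma~\ref{lem:basis_successive_min_size}, and set $\Sigma_i=\mathrm{span}_\R(\z_1,\ldots,\z_i)\cap\Z^{2m}$ for $i\le q$. Similarly, choose a reduced basis $\w_1,\ldots,\w_s$ of $\Xi$, put $\Xi_j=\mathrm{span}_\R(\w_1,\ldots,\w_j)\cap(\Gamma_\Sigma/\Sigma)$, and set $\Sigma_{q+j}=\Xi_j^{\uparrow}$. By Lemma~\ref{lem:lifting}, each $\Sigma_{q+j}$ is primitive and isotropic of rank $q+j$, and the $\Sigma_k$ form a nested flag. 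The pair $(\Sigma,\Xi)$ is recovered from the flag, since $\Sigma=\Sigma_q$ and $\Xi=\Sigma_{q+s}/\Sigma_q$. The map is therefore injective, provided we fix one choice of flag per pair.

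\textbf{Step 2: determinants.} By Minkowski \eqref{eq:minkowski} applied to the reduced basis, $\det(\Sigma_i)\asymp\theta_1\cdots\theta_i$ for $i\le q$. To see this, note that $\Sigma_i$ contains $\z_1,\ldots,\z_i$, so $\det(\Sigma_i)\ll\prod_{k\le i}|\z_k|$. For the lower bound, the property $|a_k|\ll|\x|/\lambda_k$ forces the successive minima of $\Sigma_i$ to be $\gg\sigma_k$. Likewise $\det(\Xi_j)\asymp\rho_1\cdots\rho_j$, so Lemma~\ref{lem:lifting}(2) gives $\det(\Sigma_{q+j})\asymp\det(\Sigma)\rho_1\cdots\rho_j$. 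Write $r=q+s$ and insert these into \eqref{eq:height_on_isoflagvar}. The exponent of $\theta_i$ is $2(r-i)+2(m-r+1)=2(m-i+1)$. The exponent of $\rho_j$ is $2(s-j)+2(m-r+1)=2(m-q-j+1)$. Hence $H\asymp B:=\prod_i\theta_i^{2(m-i+1)}\prod_j\rho_j^{2(m-q-j+1)}$. Applying \eqref{eq:rat_pts_upperbound_fmt} with height bound $\ll B$ yields $\ll B^{1+\varepsilon}$ pairs.

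\textbf{Step 3: the $\varepsilon$-loss.} The target bound is $T^\varepsilon B$, but Step 2 only gives $B^{1+\varepsilon}$. This is acceptable as long as $B\ll T^{O(1)}$, and here lies the main obstacle: the $\rho_j$ may be smaller than $1$, so we need $\rho_j\gg T^{-O(1)}$. The lemma giving $|\x|\ge(\sqrt{2m}\det(\Sigma))^{-1}$ on $\Gamma_\Sigma/\Sigma$, combined with $\det(\Sigma)\ll T^q$, supplies this. It follows that $T^{-O(1)}\ll B\ll T^{O(1)}$.

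When $B<1$, the count is $O(1)$. This is not automatically $\ll B\,T^\varepsilon$, so this degenerate case must be handled separately. Fix a $\Sigma$. The number of $\Xi$ with tiny $\rho_j$ can be bounded by counting flags in the fixed lattice $\Gamma_\Sigma/\Sigma$ in the same way, after rescaling by $\det(\Sigma)$. Alternatively, one can check that in all ranges actually used later, the factor $B$ is $\ge1$, or that the bound is only needed in the form $\ll T^\varepsilon\max(B,1)$. I expect this small-minima bookkeeping to be the only delicate point; the core of the argument is Steps 1 and 2.
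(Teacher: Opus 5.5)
Steps 1 and 2 are the paper's argument. You use the same reduced bases, the same flag $\Sigma_1\subset\cdots\subset\Sigma_q\subset\Xi_1^{\uparrow}\subset\cdots\subset\Xi_s^{\uparrow}$ on $\mathcal{F}^\omega_{q+s}$, the same determinant bounds and the same exponent count. Only the upper bounds on the determinants matter; your lower bounds are true but unnecessary.

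The flaw is Step 3. There you misidentify the obstacle and leave the case $B<1$ unresolved, even suggesting the statement might only hold with $\max(B,1)$. That case is harmless:
\begin{itemize}
\item Every lattice in the flag is a sublattice of $\Z^{2m}$, so its squared determinant is a positive integer. Hence the height \eqref{eq:height_on_isoflagvar} of every flag is at least $1$ (recall $q+s\le m$, so every exponent is positive).
\item Therefore $\#\{H\le X\}\ll_\varepsilon X^{1+\varepsilon}$ from \eqref{eq:rat_pts_upperbound_fmt} holds for all $X>0$, since the left side is $0$ for $X<1$.
\item Taking $X=CB$ gives a count $\ll B^{1+\varepsilon}\le B\max(1,B)^{\varepsilon}$.
\item You then need only the upper bound $B\le T^{O(1)}$, which is immediate from $\theta_i,\rho_j\le T$. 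This yields $\ll T^{O(\varepsilon)}B$, and you rescale $\varepsilon$.
\end{itemize}
This is the paper's one-line conclusion ("we used the fact that $\theta_i\ll T$ and $\rho_j\ll T$").

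Your assertion that for $B<1$ the count is $O(1)$ but not automatically $\ll T^\varepsilon B$ is mistaken. If $CB<1$ there are no flags at all, and if $1/C\le B<1$ then $O(1)\ll B$.

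The lower bound $\rho_j\gg T^{-O(1)}$, which you derive from $|\x|\ge(\sqrt{2m}\det(\Sigma))^{-1}$, plays no role in this proposition. In the paper it is used only later, in Lemma~\ref{lem:bounding_sigmaw}, to bound the number of dyadic ranges for the $\rho_j$.
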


\begin{proof}
The case $q = s = 0$ is trivial, so let us assume $q + s \ge 1$ from now on. 
    We begin by associating a flag of lattices to each pair $(\Sigma, \Xi)$ with (1) and (2) as follows.     
    By Lemma~\ref{lem:basis_successive_min_size}, we may choose once and for all a pair of bases $((\x_i)_{i=1}^q,(\y_j)_{j=1}^s)$ such that 
    \[
        |\x_i| \ll \theta_i \text{ for $i =1,\ldots,q$ }\quad 
        \text{ and } \quad
        |\y_j| \ll \rho_j \text{ for $j=1,\ldots,s$.}
    \]
    Setting
    \[
    \Sigma_i=\mathrm{span}_{\Z} (\x_1,\ldots,\x_i) \quad \text{ and } \quad 
    \Xi_j = \mathrm{span}_{\Z} (\y_1,\ldots,\y_j)
    \]
     yields flags of primitive isotropic lattices
    \[
        \Sigma_1 \subset \cdots \subset \Sigma_q = \Sigma, \qquad \Xi_1 \subset \cdots \subset \Xi_s = \Xi,
    \] 
    with the obvious omissions when $q=0$ or $s=0$.
    Indeed, any lattice spanned by a subset of a basis of a primitive isotropic lattice is primitive and isotropic. 
    Using the lifting construction from Lemma~\ref{lem:lifting}, we obtain a flag of primitive isotropic lattices
\[
\Sigma_1\subset\cdots\subset\Sigma_q
\subset
\Xi_1^\uparrow\subset\cdots\subset\Xi_s^\uparrow
\subset\Z^{2m}, \quad \rank (\Sigma_i)=i, \quad \rank(\Xi_j^\uparrow) = q+j
\]
for $i =1,\ldots, q$ and $j=1,\ldots, s$.
The map $(\Sigma,\Xi) \mapsto \Sigma_1\subset\cdots\subset\Sigma_q
\subset
\Xi_1^\uparrow\subset\cdots\subset\Xi_s^\uparrow$ is injective, since $\Sigma = \Sigma_q$ and $\Xi = \Xi^\uparrow_s/\Sigma_q$, so it suffices to count the number of possible flags.

By Minkowski's second theorem \eqref{eq:minkowski} and the identity $\det(\Xi_j^\uparrow)
=
\det(\Sigma)\det(\Xi_j)$, we have 
\[
\det(\Sigma_i) \ll \prod_{k=1}^i \theta_k  \text{ for $i=1,\ldots,q$},
\quad
\det(\Xi_j^\uparrow)
\ll
\bigg(\prod_{i=1}^q\theta_i\bigg)
\bigg(\prod_{k=1}^j\rho_k\bigg) \text{ for $j=1,\ldots,s$}.
\]
The flag $\Sigma_1\subset\cdots\subset\Sigma_q
\subset
\Xi_1^\uparrow\subset\cdots\subset\Xi_s^\uparrow$ is a rational point of  $\mathcal{F}_{q+s}^\omega$. Hence, by
\eqref{eq:height_on_isoflagvar}, its anticanonical height is at most
\[
\ll
\prod_{i=1}^q \theta_i^{2(m-i+1)}
\prod_{j=1}^s \rho_j^{2(m-q-j+1)}.
\]
Indeed, $\theta_i$ occurs in the bound for the determinants of
$\Sigma_i,\ldots,\Sigma_{q}, \Xi_1^\uparrow, \ldots, \Xi_s^\uparrow$ and therefore has total exponent
\[
2(q+s-i)+2(m-q-s+1)=2(m-i+1),
\]
while $\rho_j$ occurs in the bound for the determinants of
$ \Xi_j^\uparrow, \ldots, \Xi_s^\uparrow$ and therefore has total exponent
\[
2(s-j)+2(m-q-s+1)=2(m-q-j+1).
\]
    
    An application of \eqref{eq:rat_pts_upperbound_fmt} now yields that the number of such flags, and thus the number of tuples $(\Sigma, \Xi)$, 
    is  
    \[
    \ll_\varepsilon T^\varepsilon \prod_{i=1}^q \theta_i^{2(m-i+1)}
\prod_{j=1}^s \rho_j^{2(m-q-j+1)}.
    \]
    Here we used the fact that $\theta_i\ll T$ for $i =1,\ldots, q$ and $\rho_j \ll T$ for $ j = 1,\ldots, s$.
\end{proof}

\subsection{Counting isotropic tuples}\label{section:tuples}
We now establish the uniform upper bound for the number of isotropic tuples of integer
vectors lying in prescribed sets. 

\begin{proposition}\label{prop:translated_isotropic_frames}
    Let $1 \le r \le m$ be a positive integer and let $
    D_1 \ge \cdots \ge D_r \ge 1
    $. For $ i =1,\ldots,r$, let $\mathcal{D}_i \subseteq \R^{2m}$ be a set of diameter $\le D_i$, and set 
    \[
    \mathcal{C}(\mathcal{D}_1,\ldots,\mathcal{D}_r) =
    \bigg\{\!
    (\x_1, \ldots, \x_r)\! \in\! 
    \prod_{i=1}^r 
    \left(
    \mathcal{D}_i \cap \Z^{2m}
    \right): \omega(\x_i, \x_j)=0 \text{ for all } 1 \le i,j \le r
    \bigg\},
    \]
    with $\omega$ defined in \eqref{def:omega}. 
     Then 
    \[
    \# 
    \mathcal{C}(\mathcal{D}_1,\ldots,\mathcal{D}_r)
    \! \ll_\varepsilon \! D_1^\varepsilon \prod_{i=1}^r D_i^{2(m-i+1)},
    \]
    where the implicit constant only depends on $\varepsilon$ and $m$. 
\end{proposition}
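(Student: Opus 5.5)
We argue by induction on $r$, counting the vectors $\x_1,\x_2,\ldots,\x_r$ one at a time. Each new vector $\x_k$ must lie in $\mathcal{D}_k\cap\Z^{2m}$ and in the rational subspace $W_{k-1}=\{\x_1,\ldots,\x_{k-1}\}^{\perp_\omega}$. The exponent $2(m-k+1)$ is what one expects heuristically. The worst case is when $\x_1,\ldots,\x_{k-1}$ are linearly independent, so that $W_{k-1}$ has dimension $2m-k+1$. Then $\x_k$ satisfies $k-1$ linear conditions. Moreover, the components of $\x_k$ along $\mathrm{span}(\x_1,\ldots,\x_{k-1})$, which lie inside $W_{k-1}$, contribute at most $D_k^{k-1}$ choices. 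This already hints that the count should be organised through lattices and their determinants.

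\textbf{Reduction to a lattice-point count.} Fix $\x_1,\ldots,\x_{k-1}$ and set $\Gamma=W_{k-1}\cap\Z^{2m}$, a primitive lattice of rank $2m-\rank\langle\x_1,\ldots,\x_{k-1}\rangle$. Apply Lemma~\ref{lem:latticepointboundbox} to $\Gamma\cap\mathcal{D}_k$. This gives
\[
\#\{\x_k\}\ll\prod_{j}\bigl(1+D_k/\gamma_j\bigr),
\]
where $\gamma_j$ are the successive minima of $\Gamma$.

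The crude bound is $\ll D_k^{\rank\Gamma}$, which is too large. The saving must come from $\det(\Gamma)$, which is large when the $\x_i$ are long. Indeed, $\Gamma$ is the symplectic complement of the primitive lattice $\Lambda_{k-1}=\mathrm{span}_\R(\x_1,\ldots,\x_{k-1})\cap\Z^{2m}$. Since $\Omega$ is unimodular, $\det(\Gamma)=\det(\Lambda_{k-1})$. This yields a bound of the shape
\[
\ll D_k^{2m-k+1}\big/\det(\Lambda_{k-1})
\]
once the minima $\gamma_j$ are at most $D_k$. In general the bound picks up a correction term coming from the minima of $\Gamma$ exceeding $D_k$.

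\textbf{Summing over the earlier vectors.} Rather than iterating the fibre bound naively, I would organise the whole count by the primitive isotropic lattice $\Lambda=\mathrm{span}_\R(\x_1,\ldots,\x_r)\cap\Z^{2m}$. I would also record the flag $\Lambda_1\subset\cdots$ generated by the tuple, splitting into dyadic ranges for the successive minima $\lambda_i$ of $\Lambda$; there are $\ll D_1^\varepsilon$ ranges. For a fixed flag, the number of tuples with $\x_i\in\Lambda_i\cap\mathcal{D}_i$ is bounded by Lemma~\ref{lem:latticepointboundbox}, applied to each $\Lambda_i$ with diameter $D_i$. This gives
\[
\prod_i\prod_{j\le i}\bigl(1+D_i/\lambda_j\bigr).
\]
The number of flags with minima in the given dyadic ranges is $\ll D_1^\varepsilon\prod_j\lambda_j^{2(m-j+1)}$ by Proposition~\ref{prop:countingflags} with $s=0$, or equivalently by Corollary~\ref{cor:succminima} together with the flag version of its proof. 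Multiplying these two bounds and using $\lambda_j\ll D_j$, which holds after choosing the ordering appropriately, the powers of $\lambda_j$ should be dominated. The result is $\prod_i D_i^{2(m-i+1)}$.

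\textbf{Main obstacle: the bookkeeping.} The difficulty is that the $\lambda_j$ need not be $\ll D_j$. The vectors $\x_i$ can lie far from the origin, so $\x_i$ itself is not short; only differences $\x_i-\x_i'$ of two admissible choices are of size $\le D_i$. Consequently, counting flags of bounded height only controls the lattice spanned by the \emph{differences}, not by the $\x_i$ themselves.

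To handle this, I would first fix one reference tuple $(\x_1^0,\ldots,\x_r^0)$ and count the differences. The difficulty is that the isotropy condition is quadratic in the differences, so it does not simply transfer to them. The fallback plan is therefore a direct induction on $r$ using only the fibre bound above, splitting according to the size of $\det(\Lambda_{k-1})$ dyadically. For fixed $\Lambda_{k-1}$, the number of admissible $(\x_1,\ldots,\x_{k-1})$ is controlled by Lemma~\ref{lem:latticepointboundbox} applied inside $\Lambda_{k-1}$. Balancing the bound $D_k^{2m-k+1}/\det$ against the number of $\Lambda_{k-1}$ of given determinant, which by \eqref{eq:rat_pts_upperbound_fmt} is $\ll\det^{2(m-k+2)}$ up to $\varepsilon$, should give the claimed exponent. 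The degenerate cases where the $\x_i$ are dependent contribute fewer dimensions and are handled the same way.
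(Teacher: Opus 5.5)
There is a genuine gap, and it is exactly the one you call the main obstacle: nothing in your proposal makes the bound uniform in the position of the sets $\mathcal{D}_i$. Your ``summing over the earlier vectors'' plan is essentially the second half of the paper's proof. That plan organises by $\Lambda=\mathrm{span}_\R(\x_1,\ldots,\x_r)\cap\Z^{2m}$, uses dyadic ranges for its minima, Corollary~\ref{cor:succminima} for the number of lattices and Lemma~\ref{lem:latticepointboundbox} for the points in each. But it only works when the $\x_i$ themselves are short.

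The fallback does not repair this. When the $\mathcal{D}_i$ lie far from the origin, $\det(\Lambda_{k-1})$ is typically not bounded in terms of $D_1,\ldots,D_r$. Meanwhile the number of primitive isotropic rank-$(k-1)$ lattices of determinant at most $\Delta$ grows with $\Delta$, like $\Delta^{2m-k+2}$ (the Fano index of the isotropic Grassmannian), not $\Delta^{2(m-k+2)}$. Multiplying such a global lattice count by a per-lattice count therefore gives a bound that depends on where the sets sit. Moreover, the fibre bound $D_k^{2m-k+1}/\det(\Gamma)$ requires all minima of $\Gamma$ to be at most $D_k$, which fails precisely when $\det(\Gamma)$ is large. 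There is also no pointwise saving in the fibre: if $\x_1$ is a multiple of the first standard basis vector, then $\Gamma=\{x_{m+1}=0\}\cap\Z^{2m}$ and the fibre can have $\asymp D_2^{2m-1}$ points. Any saving must therefore come from averaging over the earlier vectors in translated sets, which is the very statement to be proved.

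The missing idea is to recentre \emph{one coordinate at a time}, with a reference vector depending on the current values of the other coordinates. For each $(\x_2,\ldots,\x_r)$ that admits some $\x_1$, fix once and for all $\cc=\cc(\x_2,\ldots,\x_r)$ with $(\cc,\x_2,\ldots,\x_r)\in\mathcal{C}(\mathcal{D}_1,\ldots,\mathcal{D}_r)$, and send $\x_1\mapsto\x_1-\cc$. This map has the following properties:
\begin{itemize}
\item bilinearity gives $\omega(\x_1-\cc,\x_i)=0$ for $i\ge 2$;
\item the alternating property gives $\omega(\x_1-\cc,\x_1-\cc)=0$;
\item the new vector lies in $[-D_1,D_1]^{2m}$;
\item the map is injective, because $\cc$ depends only on the unchanged coordinates.
\end{itemize}
Repeat this for $\x_2$ (with reference depending on $(\x_1-\cc,\x_3,\ldots,\x_r)$), then $\x_3$, and so on. The composite injects $\mathcal{C}(\mathcal{D}_1,\ldots,\mathcal{D}_r)$ into $\mathcal{C}([-D_1,D_1]^{2m},\ldots,[-D_r,D_r]^{2m})$.

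This is how the quadratic obstruction you noticed is avoided. A single global reference tuple fails because $\omega(\x_i-\x_i^0,\x_j-\x_j^0)=-\omega(\x_i,\x_j^0)-\omega(\x_i^0,\x_j)$ need not vanish. Here, at each step only one coordinate moves and the constraint is linear in it.

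After this reduction your lattice plan goes through. If $\Lambda$ has rank $s$, then $\lambda_j\le\sqrt{2m}\,D_{s-j+1}$. Count $\Lambda$ itself by Corollary~\ref{cor:succminima}; the flag generated by the tuple is not needed, and its height is not controlled by the minima of $\Lambda$ anyway. The exponent bookkeeping then reduces to $\prod_{k\le s}D_k^{2m-s+1}\le\prod_{k\le s}D_k^{2(m-k+1)}$, which is a Chebyshev sum inequality.
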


\begin{proof}
    To start with, we inductively define an injection 
    \[
    \mathcal{C}(\mathcal{D}_1,\ldots,\mathcal{D}_r) \to \mathcal{C}([-D_1,D_1]^{2m},\ldots,[-D_r,D_r]^{2m}).
    \]
    For any tuple $(\x_2,\ldots,\x_r)$
    such that there exists $\x_1$ with 
    $(\x_1,\ldots,\x_r) \in \mathcal{C}(\mathcal{D}_1, \ldots, \mathcal{D}_r)$, we choose, once and for all, $\cc = \cc(\x_2,\ldots,\x_r)$ with $(\cc,\x_2,\ldots,\x_r) \in \mathcal{C}(\mathcal{D}_1, \ldots, \mathcal{D}_r)$. 
    Then the map 
    \begin{align*}
        i_1:  \mathcal{C}(\mathcal{D}_1,\ldots,\mathcal{D}_r) \to \mathcal{C}([-D_1,D_1]^{2m},\mathcal{D}_2,\ldots,\mathcal{D}_r)\\
        (\x_1,\x_2,\ldots,\x_r) \mapsto
        (\x_1-\cc (\x_2,\ldots,\x_r),\x_2,\ldots,\x_r) 
    \end{align*}
 is clearly injective.
 Moreover, it is well-defined. Indeed, for $i=2,\ldots,r$, by bilinearity of $\omega$ we have 
 \[
    \omega(\x_1 - \cc(\x_2,\ldots,\x_r), \x_i) = \omega(\x_1,\x_i) - \omega(\cc(\x_2,\ldots,\x_r), \x_i) = 0,
 \]
while we have $\omega(\y,\y)=0$ for any $\y \in \R^{2m}$, and thus in particular 
\[
   \omega(\x_1 - \cc(\x_2,\ldots,\x_r) , \x_1 - \cc(\x_2,\ldots,\x_r) ) =0.
\]
 
 For $2 \le k \le r$, we may similarly define an injective map $i_k$ between 
 $$ \mathcal{C}([-D_1,D_1]^{2m},\ldots,
 [-D_{k-1},D_{k-1}]^{2m},
 \mathcal{D}_k, \ldots,
 \mathcal{D}_r)$$ and  $$\mathcal{C}([-D_1,D_1]^{2m},\ldots,
 [-D_{k},D_{k}]^{2m},
 \mathcal{D}_{k+1}, \ldots,
 \mathcal{D}_r).
$$
Hence $i_r \circ \cdots \circ i_1$ gives the desired injective map, and so it now suffices to bound 
\[
  \#  \mathcal{C}([-D_1,D_1]^{2m},\ldots,[-D_r,D_r]^{2m}).
\]

    For any   $(\y_1,\ldots,\y_r) \in  \mathcal{C}([-D_1,D_1]^{2m},\ldots,[-D_r,D_r]^{2m})$, consider the primitive lattice 
    \[
    \mathrm{span}_{\R}(\y_1,\ldots,\y_r) \cap \Z^{2m}.
    \]
    Since the $\y_i$ are pairwise isotropic, this is an isotropic lattice of rank $s$ for some $s\le r$;  denote its successive minima by $\sigma_1 \le \cdots \le \sigma_s$. Since $|\y_i|\le D_i$ for all $i \le r$, and since the $D_i$ are decreasing, we observe that 
    \[
    \sqrt{2m}D_1 \ge \sigma_s, \sqrt{2m} D_2 \ge \sigma_{s-1}, \ldots, \sqrt{2m} D_{s} \ge \sigma_1.
    \]

    We now decompose 
    the set  \[
    \left\{ 
    (\mu_1,\ldots,\mu_s)\in
    \left[1,\sqrt{2m}D_{s} \right]\times \cdots \times \left[1,\sqrt{2m} D_1\right]: \mu_1 \le \cdots \le \mu_s 
    \right\}
    \]
    into dyadic ranges; fix one such range, say $\left[d_s,2d_s\right]\times \cdots\times\left[d_1,2d_1\right]$ for some $d_s \le \cdots \le d_1$. By Corollary~\ref{cor:succminima}, the number of isotropic lattices with successive minima $d_{s-i+1} \le \sigma_i \le 2d_{s-i+1}$ for $i=1,\ldots,s$ is bounded by 
    \[
    \ll_\varepsilon D_1^{\varepsilon/2} \prod_{i=1}^s d_{s-i+1}^{2(m-i+1)} = D_1^{\varepsilon/2} \prod_{k=1}^s d_{k}^{2(m-s+k)}.
    \]
    Fixing one such lattice $\Sigma$, the number of $(\x_1,\ldots,\x_r) \in \Sigma^r$ with $|\x_i| \le D_i$ for $i =1,\ldots,r$ can be bounded by Lemma~\ref{lem:latticepointboundbox} as 
    \[
    \ll
    \prod_{i=1}^r
    \prod_{j=1}^s 
        \left( 
        1 + \frac{D_i}{d_j}
        \right).
    \]
    Therefore the total contribution to $\# \mathcal{C}(\mathcal{D}_1,\ldots,\mathcal{D}_r)$ from this dyadic range is 
    \begin{equation}\label{eq:dydic_contr_to_bound}
            \ll_\varepsilon 
    D_1^{\varepsilon/2} \prod_{k=1}^s d_{k}^{2(m-s+k)}
    \prod_{i=1}^r
    \prod_{j=1}^s 
        \left( 
        1 + \frac{D_i}{d_j}
        \right).
    \end{equation}
It remains to argue that this contribution is 
\begin{equation}\label{eq:remainstoshow}
\ll_\varepsilon  D_1^{\varepsilon/2} \prod_{i=1}^s D_i^{2(m-i+1)}.
\end{equation}
Then the claimed bound will follow by summing over the $\ll_\varepsilon D_1^{\varepsilon/2}$ dyadic intervals. In the case $s=0$, the products are empty and so the bound holds. Thus we may assume $s\ge 1$ from now on.
    
   We first deal with the factors coming from $i \le s$. Since the $d_i$ form a descending sequence and $d_i \ll D_i$, we get
   \begin{align*}
          \prod_{i=1}^s
    \prod_{j=1}^s 
        \left( 
        1 + \frac{D_i}{d_j}
        \right) &= 
        \prod_{i=1}^s
    \prod_{j=1}^{i} 
        \left( 
        1 + \frac{D_i}{d_j}
        \right)
         \prod_{j=i+1}^{s} 
        \left( 
        1 + \frac{D_i}{d_j}
        \right) \\
        &\ll
        \prod_{i=1}^s
    \prod_{j=1}^{i} 
        \left( 
        \frac{D_i}{d_i}
        \right)
         \prod_{j=i+1}^{s} 
        \left( 
        \frac{D_i}{d_j}
        \right) 
        = \prod_{i=1}^s \bigg(\frac{D_i^s}{d_i^i}
        \prod_{j=i+1}^s 
        \frac{1}{d_j}
        \bigg) = \prod_{i=1}^s \bigg(\frac{D_i^s}{d_i^{2i-1}}\bigg).
   \end{align*}
   For the products with $i>s$, we observe that $D_i \le D_j$ and so 
   \[
      \prod_{i=s+1}^r
    \prod_{j=1}^s 
        \left( 
        1 + \frac{D_i}{d_j}
        \right) \ll 
        \prod_{i=s+1}^r
    \prod_{j=1}^s 
        \left( 
        \frac{D_j}{d_j}
        \right) \ll
         \prod_{j=1}^s 
        \frac{D_j^{r-s}}{d_j^{r-s}}
   \]
   Combining these bounds gives by \eqref{eq:dydic_contr_to_bound} 
   \begin{align*}
   \ll_\varepsilon D_1^{\varepsilon/2 } \prod_{k=1}^s d_{k}^{2(m-s+k)} \prod_{k=1}^s \frac{D_k^{r}}{d_{k}^{2k-1+r-s}} 
   &= 
   D_1^{\varepsilon/2 }
   \prod_{k=1}^s 
   \left( D_k^r
   d_k^{2m-s-r+1}
   \right)
   \\
   &\ll_\varepsilon
    D_1^{\varepsilon/2 }
   \prod_{k=1}^s 
   \left( D_k^r
   D_k^{2m-s-r+1}
   \right)
   \\
   &= 
   D_1^{\varepsilon/2 }
   \prod_{k=1}^s 
   \left(
   D_k^{2m-s+1}
   \right) .
   \end{align*}

   Since the sequences $\log(D_i)$ and $2(m-i+1)$ are both decreasing, the Chebyshev sum inequality yields  
   \[
   \sum_{i=1}^s 2(m-i+1)\log(D_i) \ge \frac{1}{s} \sum_{i=1}^s 2(m-i+1) 
   \sum_{j=1}^s \log(D_j) = (2m-s+1) \sum_{j=1}^s \log(D_j). 
   \]
   Thus we finally get 
   \[
   \prod_{k=1}^s 
   D_k^{2m-s+1}
    \le \prod_{i=1}^s 
   D_i^{2(m-i+1)},
   \]
   which completes the proof of \eqref{eq:remainstoshow}.
\end{proof}

\section{The uniform upper bound}

In this section, we prove Theorem~\ref{thm:upperbound}. 
More concretely, in Section~\ref{section:associ}, we begin by associating to each matrix $U\in\mathfrak{B}_{\mathbf K}(T)\cap\Sp_m(\Z)$ a primitive isotropic lattice $\Sigma_U$ encoding the short directions of the lattice $\Lambda_U$ generated by the first $m$ columns of $U$. We show that the factor lattice $\Lambda_U/\Sigma_U$ has no short nonzero vectors.
 
In Section~\ref{section:countinginsydaicrange}, we fix the rank of $\Sigma_U$ and dyadic ranges for its successive minima, and prove a uniform bound for the number of matrices in each such range. We first count the possible quotient data $(\Sigma_U,(\mathbf w_i)_{i=1}^m)$ and then bound the number of matrices giving rise to a fixed tuple. Finally, in Section~\ref{section:completingproof}, we sum over the dyadic ranges and complete the proof of Theorem~\ref{thm:upperbound}.

\subsection{Associating lattices to symplectic matrices}\label{section:associ}

Let $ U \in \mathcal{M}_{2m}(\Z)$ be a symplectic matrix. We write 
\[
    U = \left(
    \x_1 ~ \cdots ~ \x_m ~ \y_1 ~ \cdots ~ \y_m
    \right),
\]
with $\x_1, \ldots, \x_m, \y_1, \ldots, \y_m$ denoting the columns of $U$; since $U$ is invertible, these vectors are linearly independent.
In this section, we study the properties of the lattice 
\begin{equation*}
        \Lambda_U := \mathrm{span}_\Z(
    \x_1, \ldots, \x_m
    ).
\end{equation*}
We denote its successive minima by $\lambda_1 \le \cdots \le \lambda_m$.

Furthermore, let 
 \begin{equation}\label{eq:choice_of_A}
     A > 8 m \left( 1+ C_{2m} \right),
 \end{equation}
  where $C_{2m}$ is the constant in Lemma~\ref{lem:close_to_vector}, be a fixed parameter and set 
\begin{equation}
\label{eq:choiceofq}
        q = 
     \max \{ i:~1 \le i \le m,~ \lambda_i \le   A^i T
        \},
\end{equation}
with the convention that $q= 0$ if the set $\{ i:~1 \le i \le m,~ \lambda_i \le  A^i T\}$ is empty. We define 
\begin{equation}\label{eq:def_SigmaU}
       \Sigma_U = 
   \Lambda_U \cap \mathrm{span}_{\R}\left( \{ 
    \x \in \Lambda_U:~ |\x| \le  A^q T
    \}
    \right).
\end{equation}

\begin{lemma}
    The lattice $\Lambda_U$ is primitive and Lagrangian.
\end{lemma}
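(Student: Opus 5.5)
The plan is to read both properties directly off the symplectic relation $U^t\Omega U=\Omega$ and the integrality of $U^{-1}$.

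\emph{Lagrangian.} Writing $U=(\x_1~\cdots~\x_m~\y_1~\cdots~\y_m)$, the $(i,j)$ entry of $U^t\Omega U$ equals the symplectic pairing of the $i$-th and $j$-th columns of $U$. Comparing with $\Omega$, whose upper-left $m\times m$ block is $O_m$, gives $\omega(\x_i,\x_j)=0$ for all $1\le i,j\le m$. By bilinearity, $\mathrm{span}_\R(\Lambda_U)$ is isotropic. Since the columns of the invertible matrix $U$ are linearly independent, this span has dimension $m$. By \eqref{eq:dimorthogonalcomplement}, an $m$-dimensional isotropic subspace is Lagrangian.

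\emph{Primitive.} Taking determinants in $U^t\Omega U=\Omega$ gives $\det(U)^2=1$, so $\det U=\pm1$. Hence $U^{-1}$ is an integer matrix; explicitly, $U^{-1}=-\Omega U^t\Omega$ also shows this directly. Now let $\z\in\mathrm{span}_\R(\Lambda_U)\cap\Z^{2m}$. Write $\z=\sum_{i=1}^m a_i\x_i$ with $a_i\in\R$, so that
\[
\z=U\,(a_1,\ldots,a_m,0,\ldots,0)^t .
\]
Applying $U^{-1}$, the coefficient vector $(a_1,\ldots,a_m,0,\ldots,0)^t=U^{-1}\z$ lies in $\Z^{2m}$. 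Thus every $a_i\in\Z$ and $\z\in\Lambda_U$. The reverse inclusion $\Lambda_U\subseteq\mathrm{span}_\R(\Lambda_U)\cap\Z^{2m}$ is trivial, so $\Lambda_U$ is primitive.

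There is no real obstacle here. The only point needing care is to justify unimodularity of $U$, which follows from the determinant computation above, rather than assuming $\det U=1$.
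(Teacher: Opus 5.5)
Your proposal is correct and follows the paper's proof: isotropy from the vanishing upper-left block of $U^t\Omega U=\Omega$ together with the dimension count, and primitivity from $\det U=\pm1$. You additionally spell out the step the paper leaves implicit, namely that integrality of $U^{-1}$ forces the real coefficients of any integer vector in $\mathrm{span}_\R(\Lambda_U)$ to be integers.
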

\begin{proof}
By \eqref{eq:def_syplectic_matrix}, we have 
\[
    U^t \Omega \, U = \Omega,
\]
and therefore 
\[
    \omega(\x_i, \x_j) = 0 \text{ for all $1 \le i,j \le m$}.
\]
This shows that $\Lambda_U$ is isotropic. Since the vectors are linearly independent, it has rank $m$, and so it is
Lagrangian.

It remains to show that $\Lambda_U$ is primitive. Indeed, since $U^t \Omega \, U = \Omega$, we get 
\[
    \det(U)^2 \det(\Omega) = \det(U^t \Omega U) = \det(\Omega),
\]
so $\det(U) \in \{ \pm 1 \}$. This implies that $\Lambda_U = \mathrm{span}_{\Z}(\x_1,\ldots,\x_m)$ is also primitive.
\end{proof}

We now turn to studying $\Sigma_U$.

\begin{lemma}\label{lem:nonzerolowerbound}
    The lattice $\Sigma_U$ has rank $q$, and it is primitive and isotropic. Its successive minima, denoted by $\sigma_1 \le \cdots \le \sigma_q$, satisfy the inequality
    \[
    \sigma_i \le  A^q T.
    \]
    Moreover, all nonzero $\x \in (\Lambda_U / \Sigma_U)\setminus \{ \mathbf{0} \}$ satisfy 
    $
        |\x| > 2\sqrt{2m} T.
    $
\end{lemma}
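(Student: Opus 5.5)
The plan is to read off every property from the maximality in the definition \eqref{eq:choiceofq} of $q$. Two facts will be used throughout: $|\cdot| \le \|\cdot\|_2 \le \sqrt{2m}\,|\cdot|$ on $\R^{2m}$, and the choice \eqref{eq:choice_of_A} of $A$ gives $A>\sqrt{2m}$ and $A > 4m+\sqrt{2m}\,C_{2m}$. The case $q=0$ is degenerate: then $\Sigma_U=\{\mathbf 0\}$ and $\Lambda_U/\Sigma_U=\Lambda_U$, and the arguments below go through with the $\Sigma_U$-terms removed.

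\emph{Rank, primitivity, isotropy and minima.} Choose linearly independent $\mathbf v_1,\dots,\mathbf v_q\in\Lambda_U$ realising $\lambda_1,\dots,\lambda_q$. Since $\lambda_i\le\lambda_q\le A^qT$, we get $|\mathbf v_i|\le\|\mathbf v_i\|_2\le A^qT$. So all $\mathbf v_i$ lie in the spanning set in \eqref{eq:def_SigmaU}, and hence in $\Sigma_U$. This gives $\rank\Sigma_U\ge q$ and $\sigma_i\le\lambda_i\le A^qT$ for $i\le q$.

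For the reverse inequality, suppose $q<m$ and that the span in \eqref{eq:def_SigmaU} had dimension $\ge q+1$. Then we would have $q+1$ independent vectors of $\Lambda_U$ with $|\x|\le A^qT$, hence $\|\x\|_2\le\sqrt{2m}A^qT$. This gives $\lambda_{q+1}\le \sqrt{2m}A^qT< A^{q+1}T$, contradicting the maximality of $q$. If $q=m$ the rank bound is trivial.

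$\Sigma_U$ is isotropic because it is a sublattice of the Lagrangian lattice $\Lambda_U$. It is primitive in $\Lambda_U$ by construction, being $\Lambda_U$ intersected with a subspace. Since $\Lambda_U$ is primitive in $\Z^{2m}$, Lemma~\ref{lem:primitivityfactsetc} shows that $\Sigma_U$ is primitive in $\Z^{2m}$.

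\emph{No short vectors in the quotient.} If $q=m$, then $\Sigma_U$ is a primitive rank-$m$ sublattice of $\Lambda_U$, so $\Sigma_U=\Lambda_U$ and the quotient is trivial. Assume therefore $q<m$, and suppose $\x\in\Lambda_U$ satisfies $\mathbf 0\ne\pi_{\Sigma_U}(\x)$ with $|\pi_{\Sigma_U}(\x)|\le 2\sqrt{2m}T$.

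Write $\x=\pi_{\Sigma_U}(\x)+\mathbf s$ with $\mathbf s\in\mathrm{span}_\R(\Sigma_U)$. Since $\sigma_q\le A^qT$, Lemma~\ref{lem:close_to_vector}, applied to $\Sigma_U$ with $T$ replaced by $A^qT$, gives $\boldsymbol\sigma\in\Sigma_U$ with $|\mathbf s-\boldsymbol\sigma|\le C_{2m}A^qT$. Then $\x-\boldsymbol\sigma\in\Lambda_U\setminus\mathrm{span}_\R(\Sigma_U)$, and
\[
|\x-\boldsymbol\sigma|\le (2\sqrt{2m}+C_{2m})A^qT .
\]
Together with $\mathbf v_1,\dots,\mathbf v_q$, this gives $q+1$ linearly independent vectors of $\Lambda_U$ of Euclidean norm at most
\[
\sqrt{2m}\,(2\sqrt{2m}+C_{2m})A^qT=(4m+\sqrt{2m}\,C_{2m})A^qT<A^{q+1}T .
\]
Hence $\lambda_{q+1}\le A^{q+1}T$, again contradicting the maximality of $q$.

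The only delicate point I expect is this last step: one must lift a short quotient vector to a genuinely short vector of $\Lambda_U$, which requires Lemma~\ref{lem:close_to_vector} to absorb the $\Sigma_U$-component. Once that is done, the constant in \eqref{eq:choice_of_A} is exactly large enough for the contradiction.
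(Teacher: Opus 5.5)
Your proposal is correct and follows essentially the same route as the paper. It gets the rank and the bound $\sigma_i\le A^qT$ from $\lambda_q\le A^qT<A^{q+1}T<\lambda_{q+1}$ together with $A>\sqrt{2m}$, primitivity from Lemma~\ref{lem:primitivityfactsetc}, and the quotient bound by lifting a short projected vector via Lemma~\ref{lem:close_to_vector} to a $(q+1)$-st independent vector of Euclidean norm $<A^{q+1}T$. The only cosmetic difference is that you fold $q=0$ into the general argument rather than treating it separately; this is valid, since then $\Sigma_U=\{\mathbf 0\}$ and $(4m+\sqrt{2m}\,C_{2m})T<AT$ still yields the contradiction.
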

\begin{proof}
    We first prove that $\rank (\Sigma_U) = q$. Indeed, $\lambda_1 \le \cdots \le \lambda_q \le A^q T$, and therefore the set 
\[
      \{ 
    \x \in \Lambda_U:~ |\x| \le  A^q T
    \}
\]
contains $q$ linearly independent vectors.
If $m=q$, these $q$ linearly independent vectors already show that $\rank(\Sigma_U) = q$. If $m>q$, we further observe that 
\[
    \{ 
    \x \in \Lambda_U:~ |\x| \le  A^q T
    \} \subseteq B(\mathbf{0}, \sqrt{2m} A^q T),
\]
but the $(q+1)$-th successive minimum of $\Lambda_U$
satisfies 
\[
    \lambda_{q+1} > A^{q+1} T > \sqrt{2m} A^q T.
\]
Thus $  \{ 
    \x \in \Lambda_U:~ |\x| \le  A^q T
    \}$ is contained in a space of dimension $q$, and hence 
    \[
    \rank (\Sigma_U) =q.
    \]

The lattice $\Sigma_U$ is isotropic since it is contained in the isotropic lattice $\Lambda_U$.
By construction,
the lattice 
\[
\Sigma_U = 
   \Lambda_U \cap \mathrm{span}_{\R}\left( \{ 
    \x \in \Lambda_U:~ |\x| \le  A^q T
    \}
    \right)
\]
 is primitive in the primitive lattice $\Lambda_U$, and Lemma~\ref{lem:primitivityfactsetc} implies that it is primitive in $\Z^{2m}$. 

For $i \le q$, we have $\lambda_i \le \lambda_q \le A^q T$, 
so intersecting a ball of radius $\lambda_i$, centred at the origin, with 
\[
    \{ 
    \x \in \Lambda_U:~ |\x| \le  A^q T
    \} 
\]
yields at least $i$ linearly independent vectors.
Hence $\sigma_i \le \lambda_i \le A^q T$.

It remains to prove that $|\x| >2 \sqrt{2m} T$ for all $\mathbf{0} \neq \x \in \Lambda_U / \Sigma_U$. If $q=m$, then $\Lambda_U = \Sigma_U$ and so $ \Lambda_U / \Sigma_U = \{ \mathbf{0}\}$, so there is nothing to prove. If $q=0$, then $\lambda_1>AT$ and $\Sigma_U=\{\mathbf{0}\}$. Since \(A > \sqrt{2m}\) is chosen sufficiently large, the lemma statement follows.

Therefore we may assume $1\le q<m$. Suppose that there is $\mathbf{0} \neq \x \in \Lambda_U / \Sigma_U$ with $|\x| \le 2 \sqrt{2m} T$. Write $\x = \pi_{\Sigma_U}(\w)$ for some $\w \in \Lambda_U$. Then 
\[
    \w - \pi_{\Sigma_U}( \w )  \in \mathrm{span}_{\R}(\Sigma_U).
\]
Since all successive minima of $\Sigma_U$ are of size at most $A^q T$,
 an application of Lemma~\ref{lem:close_to_vector} yields that there is some  $\w_0 \in \Sigma_U$ with 
\[
    |\w - \pi_{\Sigma_U}(\w) - \w_0| \le C_{2m} A^q T. 
\]
By the triangle inequality and our choice of $A$ in \eqref{eq:choice_of_A}, we get 
\[
    |\w - \w_0| \le |\pi_{\Sigma_U}(\w)| + | \w -  \pi_{\Sigma_U}(\w) - \w_0| \le 2 \sqrt{2m}T + C_{2m} A^q T \le (2 \sqrt{2m}+C_{2m})A^q T 
\]
and thus 
\[
     \|\w - \w_0\|_2 \le \sqrt{2m} (2 \sqrt{2m}+C_{2m})A^q T  < A^{q+1}T.
\]
Moreover, by $\pi_{\Sigma_U}(\w - \w_0) = \pi_{\Sigma_U}(\w) \neq \mathbf{0}$, we know that $\w - \w_0\not \in \mathrm{span}_{\R}(\Sigma_U)$. Since $\sigma_q \le A^qT$, it follows that $\Sigma_U \cup \{\w-\w_0 \} \subseteq \Lambda_U$ contains $q+1$ linearly independent vectors of size at most $A^{q+1}T$.
This contradicts our choice of $q$ in \eqref{eq:choiceofq}.
\end{proof}

\subsection{Counting in a fixed dyadic range}\label{section:countinginsydaicrange}
Let us fix a positive integer $q \in \{1,\ldots, m\}$ 
as well as real numbers 
\[
    1 \le \theta_1 \le \cdots \le \theta_q \le A^m T,
\]
where we set $\boldsymbol{\theta} := (\theta_1,\ldots, \theta_q)$.
In this section we bound the number of symplectic integer matrices $U = \left(
    \x_1 ~ \cdots ~ \x_m ~ \y_1 ~ \cdots ~ \y_m
    \right) \in \mathfrak{B}_{\mathbf K}(T)\cap \Sp_{m}(\Z)$ such that the lattice $\Sigma_U$, as defined in \eqref{eq:def_SigmaU}, has rank $q$ and successive minima $\sigma_1 \le \cdots \le \sigma_q$ with 
\[
    \theta_i \le \sigma_i \le 2 \theta_i \quad \text{ for }i=1,\ldots,q;
\]
we denote the set of all such matrices by $\mathcal{A}_T(\boldsymbol{\theta})$. The main goal of this section is to prove the following result.

\begin{proposition}\label{prop:ATbound}
    For every $q\in\{1,\ldots,m\}$ and every admissible $\boldsymbol{\theta}$, we have
\[ 
\#\mathcal{A}_T(\boldsymbol\theta) \ll_\varepsilon T^{m(m+1)+\varepsilon}.
\]
\end{proposition}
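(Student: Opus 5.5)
The plan is to count the matrices $U\in\mathcal{A}_T(\boldsymbol\theta)$ in two stages. First we count the quotient data $(\Sigma_U,(\w_i)_{i=1}^m)$. Then we bound the number of matrices $U$ that share a fixed tuple of such data.

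\textbf{Quotient data.} Write $\Sigma=\Sigma_U$ and note that $\Lambda_U\subseteq\Gamma_\Sigma$, since $\Lambda_U$ is Lagrangian. Set $\w_i=\pi_\Sigma(\x_i)\in\Gamma_\Sigma/\Sigma$. The $\w_i$ span $\Lambda_U/\Sigma$, which is a Lagrangian lattice of rank $m-q$ for $\overline{\omega}$. The box constraint forces $\x_i\in[\mathbf{K}_i-T,\mathbf{K}_i+T]^{2m}$, where $\mathbf{K}_i$ denotes the $i$-th column of $\mathbf{K}$. Hence $\w_i$ lies in the projection of a box of diameter $\ll T$.

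Fix $\Sigma$. Since $\overline{\omega}$ is nondegenerate, we can apply an analogue of Proposition~\ref{prop:translated_isotropic_frames} inside $\Gamma_\Sigma/\Sigma$, using the factor lattice in place of $\Z^{2m}$. This should bound the number of isotropic tuples $(\w_1,\dots,\w_m)$. To avoid the $\rank m-q$ vs.\ $m$ mismatch, I would instead proceed as follows.
\begin{itemize}
\item Fix $\w_1,\dots,\w_m$ successively.
\item Pick reference points $\cc$ as in that proof, so that the differences $\w_i-\cc_i$ are short vectors of $\Gamma_\Sigma/\Sigma$ of size $\ll T$.
\item Let these differences span a primitive isotropic lattice $\Xi$ of some rank $s\le m-q$, with successive minima in dyadic ranges $\rho_j$.
\end{itemize}
Proposition~\ref{prop:countingflags} counts the pairs $(\Sigma,\Xi)$ by
\[
T^\varepsilon\prod_{i\le q}\theta_i^{2(m-i+1)}\prod_{j\le s}\rho_j^{2(m-q-j+1)}.
\]
For fixed $\Xi$, Lemma~\ref{lem:latticepointboundbox} bounds the number of tuples of differences by $\prod_j(1+T/\rho_j)^{m}$. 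The lower bound $|\x|\gg\det(\Sigma)^{-1}$ on factor-lattice vectors keeps $\rho_j$ from being too small, so the number of dyadic ranges is $\ll T^\varepsilon$. The bookkeeping, including the Chebyshev step from Proposition~\ref{prop:translated_isotropic_frames}, should give
\[
\ll T^\varepsilon\prod_{i\le q}\theta_i^{2(m-i+1)}\,T^{(m-q)(m-q+1)}
\]
choices of $(\Sigma,(\w_i))$.

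\textbf{Fibres over fixed data.} Now fix $(\Sigma,(\w_i))$. Each $\x_i$ lies in $\pi_\Sigma^{-1}(\w_i)\cap\Gamma_\Sigma$, which is a translate of $\Sigma$, intersected with a box of side $2T$. By Lemma~\ref{lem:latticepointboundbox} there are $\ll\prod_{j\le q}T/\theta_j$ choices per column, so $\ll T^{qm}\prod_j\theta_j^{-m}$ choices for all $\x_i$.

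Next, with the $\x_i$ fixed, any two completions $\y_i,\widetilde{\y}_i$ differ by an element of $\Lambda_U$, because $\omega(\y_i-\widetilde{\y}_i,\x_j)=0$ and $\Lambda_U$ is primitive Lagrangian. This difference has size $\le 2T$. By Lemma~\ref{lem:nonzerolowerbound}, its projection onto $\Lambda_U/\Sigma$ must vanish, so the difference lies in $\Sigma$. The $\y$-columns also satisfy $\omega(\y_i,\y_j)=0$. Lemma~\ref{lem:latticepointboundbox} alone would give $\prod_j(T/\theta_j)^m$ completions. Combining this with the isotropy of the $\y$'s, via an argument like Proposition~\ref{prop:translated_isotropic_frames} restricted to differences in $\Sigma$, should cut the completion count to exponents $\theta_j^{-2(m-j+1)+m}$. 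The target is
\[
\ll T^{m(m+1)-(m-q)(m-q+1)+\varepsilon}\prod_{i\le q}\theta_i^{-2(m-i+1)}.
\]
Multiplying by the count of quotient data, the $\theta$-powers cancel exactly, as in the introduction, and the total is $T^{m(m+1)+\varepsilon}$.

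\textbf{Main obstacle.} The hardest step is getting the sharp exponents $\theta_i^{-2(m-i+1)}$ in the fibre count. Naive lattice-point counting gives only $\theta_i^{-2m}$ jointly for $\x$ and $\y$, which is too weak for $i\ge2$ by a factor involving the $\theta_i$. The first thing I would try is to exploit the symplectic relations $\omega(\y_i,\y_j)=0$ and $\omega(\x_i,\y_j)=\delta_{ij}$. Reducing modulo $\Sigma$, these should force the $\Sigma$-components to satisfy a linear system with a skew-symmetric structure, which cuts the free parameters from $qm$ down to roughly $qm-\binom{q}{2}$. That reduction is what should produce the triangular exponents.
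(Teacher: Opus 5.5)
Your two-stage structure matches the paper's (Lemmas~\ref{lem:bounding_sigmaw} and~\ref{lem:boundingpersigmaw}). However, the fibre count as you set it up cannot work, and the quotient-data count has a bookkeeping failure.

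\textbf{Fibres.} You bound the $\x$-tuples column by column by $T^{qm}\prod_j\theta_j^{-m}$. You then want a completion bound that is uniform in the fixed $\x$-tuple, and no such bound of the required size exists. Take $m=q=2$, $\mathbf{K}=I_4$, $\x_1=\mathbf{e}_1$, $\x_2=\mathbf{e}_2$, so that $\Sigma=\Lambda_U=\Z\mathbf{e}_1\oplus\Z\mathbf{e}_2$ and $\theta_1=\theta_2=1$. Every
\[
U=\begin{pmatrix} I_2 & -M\\ O_2 & I_2\end{pmatrix},\qquad M=M^t\in\mathcal{M}_2(\Z),\quad |M|\le T,
\]
lies in this fibre, which gives $\asymp T^3$ completions of this single $\x$-tuple. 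Your scheme needs $\ll T^{2+\varepsilon}$: the fibre target $T^6$ divided by your $\x$-count $T^4$. In general, for fixed $\x_i$ the completions correspond to symmetric $q\times q$ integer matrices. They are subject to size constraints that depend on the coefficient matrix of the $\x$-tuple in a reduced basis of $\Sigma$. Multiplying a column-wise $\x$-count by a worst-case completion count loses this correlation. So no argument that bounds completions uniformly in the $\x$-tuple, skew-symmetric or otherwise, can reach the target.

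The missing idea is the paper's joint encoding. First show that $(\Sigma,(\w_i))$ determines $\Lambda_U$. Take a basis $\z_1,\dots,\z_q$ of $\Sigma$ as in Lemma~\ref{lem:basis_successive_min_size}, and extend it to a basis $\z_1,\dots,\z_m$ of $\Lambda_U$. Write $\x_i=\sum_k b_{i,k}\z_k$ and, relative to a fixed reference completion, $\y_i-\widetilde{\y}_i=\sum_{j\le q}c_{i,j}\z_j$. Then group the coefficients by basis direction: $\mathbf{b}_j=(b_{1,j},\dots,b_{m,j})$ and $\mathbf{c}_j=(c_{1,j},\dots,c_{m,j})$. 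The symmetric matrix $M$ with $(\y_1-\widetilde{\y}_1~\cdots~\y_m-\widetilde{\y}_m)=(\x_1~\cdots~\x_m)M$ gives $\mathbf{c}_j=\mathbf{b}_jM$. Hence $\mathbf{v}_j=(\mathbf{b}_j,\mathbf{c}_j)\in\Z^{2m}$, $j=1,\dots,q$, is an isotropic $q$-tuple with $\mathbf{v}_j$ in a set of diameter $\ll T/\theta_j$. Proposition~\ref{prop:translated_isotropic_frames} then counts the $\x$'s and $\y$'s together by $T^{\varepsilon}\prod_j(T/\theta_j)^{2(m-j+1)}$. This is exactly the required fibre bound.

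\textbf{Quotient data.} Your workaround for the $m$ versus $m-q$ mismatch fails. Counting all $m$ translated differences in $\Xi$ gives the factor $\prod_j(1+T/\rho_j)^m$, and the bookkeeping then leaves $T^{sm}\prod_j\rho_j^{m-2q-2j+2}$. These exponents can be negative: for $m=3$, $q=2$, $s=1$ you get $T^3\rho_1^{-1}$. The only lower bound available is $\rho_j\gg T^{-q}$, so this yields $T^5$ instead of $T^2$ times $\prod_i\theta_i^{2(m-i+1)}$.

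The translated differences have lost the rigidity of the true $\w_i$, so use that rigidity before translating, as in Lemma~\ref{lem:determininglambdamodsigma}. Every nonzero vector of $\Lambda_U/\Sigma_U$ has norm exceeding $2\sqrt{2m}T$, so all $\w_i$ are determined by $\Sigma$ and any $m-q$ linearly independent ones. Only those $m-q$ vectors need to be counted. The exponents then become $m-q-2(k-1)$, whose Chebyshev average $m-q-s+1$ is positive, and the upper bound $\rho_j\ll T$ suffices.
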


We decompose our proof of Proposition~\ref{prop:ATbound} into two steps. First, we give an upper bound for the number of pairs $(\Sigma, (\w_i)_{i=1}^m)$ such that there exists some symplectic integer matrix $U =  \left(
    \x_1 ~ \cdots ~ \x_m ~ \y_1 ~ \cdots ~ \y_m
    \right) \in \mathcal{A}_T(\boldsymbol{\theta})$  with \begin{equation*}
    \Sigma = \Sigma_U \quad  \text{and} \quad \w_i = \pi_{\Sigma_U}(\x_i)
    \text{ for } i=1,\ldots,m.
    \end{equation*}
    Then we count the number of admissible $U$ corresponding to one tuple $(\Sigma,(\w_i)_{i=1}^m)$.
    We proceed with proving the following auxiliary result.

\begin{lemma}\label{lem:determininglambdamodsigma}
    Assume that $U \in \mathcal{A}_T(\boldsymbol{\theta})$.
    Then the lattice $\Lambda_U / \Sigma_U$ is a primitive lattice of rank $m-q$ in $\pi_{\Sigma_U}(\Z^{2m})$,
    generated by the projected columns
    \[
        \pi_{\Sigma_U}(\x_1),\ldots,\pi_{\Sigma_U}(\x_m).
    \]
    Moreover, once $\Sigma_U$ and $m-q$ linearly independent vectors among
    $\pi_{\Sigma_U}(\x_1),\ldots,\pi_{\Sigma_U}(\x_m)$ are fixed, all the remaining projected
    columns $\pi_{\Sigma_U}(\x_j)$ are uniquely determined.
\end{lemma}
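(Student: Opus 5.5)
The plan is to use only linear algebra on the factor-lattice construction \eqref{eq:factor_lattice}, together with Lemma~\ref{lem:primitivityfactsetc} and the facts already established for $\Lambda_U$ and $\Sigma_U$. First, $\Lambda_U$ is primitive and Lagrangian, and $\Sigma_U$ is primitive in $\Lambda_U$ of rank $q$ by Lemma~\ref{lem:nonzerolowerbound}. Hence $\Lambda_U/\Sigma_U = \pi_{\Sigma_U}(\Lambda_U)$ is a lattice of rank $m-q$. Since $\pi_{\Sigma_U}$ is linear and $\x_1,\ldots,\x_m$ generate $\Lambda_U$ over $\Z$, the images $\pi_{\Sigma_U}(\x_1),\ldots,\pi_{\Sigma_U}(\x_m)$ generate $\pi_{\Sigma_U}(\Lambda_U)$.

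For primitivity in $\pi_{\Sigma_U}(\Z^{2m}) = \Z^{2m}/\Sigma_U$, I will apply Lemma~\ref{lem:primitivityfactsetc} with the chain $\Sigma_U\subseteq\Lambda_U\subseteq\Z^{2m}$. Here $\Sigma_U$ is primitive in $\Lambda_U$ by construction \eqref{eq:def_SigmaU}, and $\Lambda_U$ is primitive in $\Z^{2m}$. The lemma then gives that $\Lambda_U/\Sigma_U$ is primitive in $\Z^{2m}/\Sigma_U$. If one wants to verify this directly, take $\mathbf v=\pi_{\Sigma_U}(\z)$ with $\z\in\Z^{2m}$ and $\mathbf v\in\mathrm{span}_\R(\pi_{\Sigma_U}(\Lambda_U))$. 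Then $\z\in \mathrm{span}_\R(\Lambda_U)+\mathrm{span}_\R(\Sigma_U)=\mathrm{span}_\R(\Lambda_U)$, so $\z\in\Lambda_U$ by primitivity of $\Lambda_U$.

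For the uniqueness statement, fix $\Sigma_U$, which determines $\pi_{\Sigma_U}$ and the space $W=\mathrm{span}_\R(\Sigma_U)^\perp$. Also fix $m-q$ linearly independent projected columns $\w_{j_1},\ldots,\w_{j_{m-q}}$. These span $\mathrm{span}_\R(\Lambda_U/\Sigma_U)$, which has dimension $m-q$. The key constraint comes from the symplectic relations: the full matrix $U$ satisfies $\omega(\x_i,\y_k)=\delta_{ik}$ and $\omega(\x_i,\x_k)=0$. I would use the $\y$-columns to pin down coefficients as follows. Each remaining $\w_j$ lies in that span, so $\w_j=\sum_l c_{l}\w_{j_l}$. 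It remains to show the coefficients $c_l$ are determined by the data.

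This is where I expect the main obstacle, since a priori the data $\Sigma_U$, $(\w_{j_l})$ alone might not fix $c_l$. The first route to try is unimodularity of $U$. Because $\x_1,\ldots,\x_m$ is a basis of $\Lambda_U$ and $\Sigma_U$ is primitive, some $q$ of the $\x$'s must project to relations. More precisely, the projected vectors satisfy $q$ independent integer relations, and I would need these to be forced.

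If that fails, I would read the statement in the intended sense: the remaining $\pi_{\Sigma_U}(\x_j)$ are determined once $\Sigma_U$, the chosen projected columns, and the relevant $\y$-data are fixed. One way to get this is to use the nondegenerate pairing with $\y$'s projected into $\Gamma_{\Sigma_U}/\Sigma_U$. Alternatively, use that $\w_j$ is determined modulo the finitely many short vectors of the factor lattice: two candidates differ by a vector of $\Lambda_U/\Sigma_U$ of norm $\le 2\sqrt{2m}T$, which must vanish by Lemma~\ref{lem:nonzerolowerbound}, since all entries lie in a box of side $2T$. This last argument is the one I would commit to. Since $\x_j\in$ a box of side $2T$ around fixed centre entries, two admissible values of $\w_j$ differ by $\pi_{\Sigma_U}$ of a lattice vector of sup-norm $\le 2T$. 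Because projection does not increase Euclidean norm, this difference has Euclidean norm $\le 2\sqrt{2m}T$, so it is zero. Hence the remaining projected columns are unique.
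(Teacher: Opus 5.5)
Your committed argument is the paper's proof: primitivity of $\Lambda_U/\Sigma_U$ via Lemma~\ref{lem:primitivityfactsetc}, and uniqueness because two candidates for $\pi_{\Sigma_U}(\x_j)$ differ by a vector of Euclidean (hence sup) norm at most $2\sqrt{2m}T$, which must vanish by Lemma~\ref{lem:nonzerolowerbound}; the detours through the $\y$-columns, unimodularity, and extra $\y$-data are unnecessary, since the statement needs no $\y$-data. State explicitly, as the paper does, that primitivity gives $\Lambda_U/\Sigma_U=\mathrm{span}_\R(\w_{j_1},\ldots,\w_{j_{m-q}})\cap\pi_{\Sigma_U}(\Z^{2m})$, so the factor lattice is determined by the fixed data; this is what puts both candidates, which may come from different matrices, in the same lattice and lets you apply Lemma~\ref{lem:nonzerolowerbound} to their difference.
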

\begin{proof}
Since $\Sigma_U$ is primitive in $\Lambda_U$ and $\Lambda_U$ is primitive in
$\Z^{2m}$, Lemma~\ref{lem:primitivityfactsetc} implies that
$\Lambda_U/\Sigma_U$
is primitive in
$\pi_{\Sigma_U}(\Z^{2m})$.
Moreover, we have
\[
\rank(\Lambda_U/\Sigma_U)
=
\rank(\Lambda_U)-\rank(\Sigma_U)
=
m-q.
\]
Since
\[
\Lambda_U=\mathrm{span}_{\Z}(\x_1,\ldots,\x_m),
\]
we also have
\[
\Lambda_U/\Sigma_U
=
\mathrm{span}_{\Z}\big(
\pi_{\Sigma_U}(\x_1),\ldots,\pi_{\Sigma_U}(\x_m)
\big).
\]

Now fix $m-q$ linearly independent projected columns, say
$\w_1,\ldots,\w_{m-q}$.
Then 
\[
    \mathrm{span}_{\R}(\w_1,\ldots,\w_{m-q}) \cap \pi_{\Sigma_U}(\Z^{2m}) = \Lambda_U/\Sigma_U,
\]
since $\Lambda_U/\Sigma_U$ is primitive of rank $m-q$ and contains
the $\w_i$. This proves that these vectors determine the factor lattice
$\Lambda_U/\Sigma_U$.

Let $\x_j$ be any remaining column. Its projection
$\pi_{\Sigma_U}(\x_j)$ lies in the fixed lattice $\Lambda_U/\Sigma_U$ and also
in the projection of the corresponding column box; since the box has diameter at most
$
2\sqrt{2m}T$, so does its projection.
On the other hand, by Lemma~\ref{lem:nonzerolowerbound}, every nonzero vector of
$\Lambda_U/\Sigma_U$ has norm strictly larger than $2\sqrt{2m}T$.
This means that the projected column box contains at most one point of
$\Lambda_U/\Sigma_U$, and thus $\pi_{\Sigma_U}(\x_j)$ is uniquely determined.
\end{proof}

\begin{lemma}\label{lem:bounding_sigmaw}
    The number of pairs $(\Sigma, (\w_i)_{i=1}^m)$ such that there exists some symplectic integer matrix $U =  \left(
    \x_1 ~ \cdots ~ \x_m ~ \y_1 ~ \cdots ~ \y_m
    \right) \in \mathcal{A}_T(\boldsymbol{\theta})$  with \begin{equation}\label{eq:cond_on_tupleSigmaw}
    \Sigma = \Sigma_U \quad  \text{and} \quad \w_i = \pi_{\Sigma_U}(\x_i)
    \text{ for } i=1,\ldots,m
    \end{equation} can be bounded from above by
    \[
        \# \{ 
        (\Sigma, (\w_i)_{i=1}^m) \text{ satisfying \eqref{eq:cond_on_tupleSigmaw}}
        \}
        \ll_\varepsilon T^{ (m-q)(m-q+1) +\varepsilon}  \prod_{i=1}^q \theta_i^{2(m-i+1)}.
    \]
\end{lemma}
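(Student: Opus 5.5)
First I fix $\Sigma$ and then count the projected columns. By Corollary~\ref{cor:succminima} (with $A^mT$ in place of $T$, which only changes constants), the number of admissible $\Sigma$ of rank $q$ with $\theta_i\le\sigma_i\le 2\theta_i$ is $\ll_\varepsilon T^{\varepsilon}\prod_{i=1}^q\theta_i^{2(m-i+1)}$. It therefore suffices to show the following: for each fixed $\Sigma$, the number of tuples $(\w_i)_{i=1}^m$ arising from some $U\in\mathcal{A}_T(\boldsymbol\theta)$ with $\Sigma_U=\Sigma$ is $\ll_\varepsilon T^{(m-q)(m-q+1)+\varepsilon}$.

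Now fix $\Sigma$. Each $\w_i=\pi_\Sigma(\x_i)$ lies in $\Gamma_\Sigma/\Sigma$, because $\x_i\in\Lambda_U\subseteq\Sigma^{\perp_\omega}\cap\Z^{2m}=\Gamma_\Sigma$ (here we use that $\Lambda_U$ is isotropic and contains $\Sigma$). Moreover $\w_i$ lies in $\pi_\Sigma(\mathcal{B}_i)$, where $\mathcal{B}_i$ is the box $\prod_j[K_{j,i}-T,K_{j,i}+T]$ for the $i$-th column. This projection has diameter $\le 2\sqrt{2m}T$. The $\w_i$ are pairwise $\overline\omega$-isotropic, since $\overline\omega(\w_i,\w_j)=\omega(\x_i,\x_j)=0$.

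By Lemma~\ref{lem:determininglambdamodsigma}, the tuple is determined by $\Sigma$ together with a choice of indices $j_1<\cdots<j_{m-q}$, of which there are $\ll 1$, and the $m-q$ linearly independent vectors $\w_{j_1},\ldots,\w_{j_{m-q}}$. So I need to count $\overline\omega$-isotropic $(m-q)$-tuples in $\Gamma_\Sigma/\Sigma$ with $\w_{j_k}\in\pi_\Sigma(\mathcal{B}_{j_k})$. I would do this with an analogue of Proposition~\ref{prop:translated_isotropic_frames}, carried out inside the rank-$2(m-q)$ lattice $\Gamma_\Sigma/\Sigma$ with its nondegenerate form $\overline\omega$, taking $r=m-q$ and $D_i\asymp T$. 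That would give $\ll_\varepsilon T^{\varepsilon}\prod_{i=1}^{m-q}T^{2(m-q-i+1)}=T^{(m-q)(m-q+1)+\varepsilon}$, as desired.

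The proof of Proposition~\ref{prop:translated_isotropic_frames} transfers as follows. First, the translation injection works verbatim, since it only uses bilinearity and alternation; this reduces to tuples of vectors of norm $\ll T$. Next, I attach to the tuple the primitive $\overline\omega$-isotropic sublattice $\Xi=\mathrm{span}_\R(\w_{j_1},\ldots)\cap\Gamma_\Sigma/\Sigma$. Its successive minima are $\ll T$ and, by Lemma~\ref{lem:nonzerolowerbound}, at least $\gg\det(\Sigma)^{-1}\gg T^{-O(1)}$, so there are $\ll T^{\varepsilon}$ dyadic ranges $\rho_i$. For each range, Proposition~\ref{prop:countingflags} bounds the number of pairs $(\Sigma,\Xi)$ by $T^{\varepsilon}\prod\theta_i^{2(m-i+1)}\prod\rho_i^{2(m-q-i+1)}$. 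Lemma~\ref{lem:latticepointboundbox} then counts tuples in a fixed $\Xi$ by $\prod_{i,j}(1+T/\rho_j)$. The Chebyshev-type bookkeeping from that proof, with $m$ replaced by $m-q$, combines these into $T^{(m-q)(m-q+1)}$. For this reason I would count the pairs $(\Sigma,\Xi)$ jointly through Proposition~\ref{prop:countingflags}, rather than first counting $\Sigma$ by Corollary~\ref{cor:succminima}.

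The main obstacle is exactly this transfer. Unlike in $\Z^{2m}$, the lattice $\Gamma_\Sigma/\Sigma$ can have successive minima below $1$, so the $\rho_i$ may be less than $1$. The estimate $1+T/\rho_j\ll T/\rho_j$ survives this. The step that needs care is replacing $d_k$ by $D_k$ in the final inequality: there the exponent $2m-s-r+1$ becomes $2(m-q)-s-r+1$, which is positive since $s\le r\le m-q$, so the substitution remains valid as long as $\rho_k\ll T$. Finally, the dyadic decomposition must be allowed to begin at $\det(\Sigma)^{-1}/\sqrt{2m}$ rather than at $1$, which still gives only $\ll\log T$ ranges per coordinate.
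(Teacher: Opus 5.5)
Your final argument is essentially the paper's proof: reduce to $m-q$ independent projected columns via Lemma~\ref{lem:determininglambdamodsigma}, translate inside $\Gamma_\Sigma/\Sigma$ to $\overline{\omega}$-isotropic vectors of norm $\ll T$, and attach the primitive isotropic lattice $\Xi$ they span. Then split its minima into $\ll \log T$ dyadic ranges between $\asymp T^{-q}$ and $\asymp T$, count pairs $(\Sigma,\Xi)$ jointly with Proposition~\ref{prop:countingflags} and the vectors inside $\Xi$ with Lemma~\ref{lem:latticepointboundbox}, and finish with a Chebyshev-type inequality. The paper applies reverse Chebyshev directly to $\prod_k\rho_k^{m-q-2(k-1)}$, which is equivalent to your transferred bookkeeping.

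Three repairs are needed. First, discard your opening reduction to a bound uniform in a single fixed $\Sigma$, since the joint count does not give that: for an individual $\Sigma$ the lattice $\Gamma_\Sigma/\Sigma$ can have first minimum $\asymp\det(\Sigma)^{-1}$, so the number of $\Xi$ with prescribed minima inside it is not $\ll\prod_k\rho_k^{2(m-q-k+1)}$. Second, define $\Xi$ from the translated vectors, not the original $\w_{j_k}$, whose span meets $\Gamma_\Sigma/\Sigma$ in $\Lambda_U/\Sigma$, whose minima need not be $\ll T$. Third, the lower bound $\gg\det(\Sigma)^{-1}$ comes from the preliminary lemma $|\x|\ge(\sqrt{2m}\det(\Sigma))^{-1}$ on $\Gamma_\Sigma/\Sigma$, not from Lemma~\ref{lem:nonzerolowerbound}.
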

\begin{proof}
By Lemma~\ref{lem:determininglambdamodsigma}, each admissible tuple $(\Sigma, (\w_i)_{i=1}^m)$ is determined by $\Sigma$ and any $m-q$ linearly independent vectors among the $ \w_i$. Since there are $\ll 1$ possible choices of indices, it therefore actually suffices to give an upper bound on the number of tuples 
\[
    (\Sigma, (\w_{j_i})_{i=1}^{m-q}), \quad 1\le j_1 < \ldots < j_{m-q} \le m.
\]
For notational convenience, we relabel $j_i=i$ for $1 \le i \le m-q$.

For fixed $\Sigma$, every $\w_i$ is of the form $\w_i = \pi_\Sigma(\x_i)$ for some integer vector $\x_i$ that belongs to the set of possible $i$-th columns of matrices in $\mathfrak{B}_{\mathbf{K}}(T)$. This condition constrains the $\x_i$ to a set of diameter $\le 2\sqrt{2m}T$; thus $\pi_\Sigma (\x_i) = \w_i$ also lies in a set of diameter at most $2 \sqrt{2m} T$. 

Moreover, the vectors $\w_1,\ldots,\w_{m-q}$ are pairwise isotropic with
respect to $\overline{\omega}$. Applying the same translation
argument as at the beginning of the proof of Proposition~\ref{prop:translated_isotropic_frames},
now inside $\Gamma_\Sigma/\Sigma$, gives an injection into isotropic tuples
\[
(\mathbf{v}_1,\ldots,\mathbf{v}_{m-q})\in(\Gamma_\Sigma/\Sigma)^{m-q}
\]
satisfying
\[
|\mathbf{v}_i|\ll T\qquad \text{for }i=1,\ldots,m-q.
\]
So, for fixed $\Sigma$, it suffices to bound the number of such tuples $(\mathbf{v}_1,\ldots,\mathbf{v}_{m-q})$.

Let us define 
\[
    \Xi = \mathrm{span}_{\R}(\mathbf{v}_1,\ldots,\mathbf{v}_{m-q})
    \cap \Gamma_\Sigma/\Sigma,
\]
a primitive isotropic lattice of rank $s \le m-q$, and denote its successive minima by $\xi_1 \le \cdots \le \xi_s$. 
The case $s=0$ now follows by Corollary~\ref{cor:succminima}, so we may assume $s\ge1$.
Since 
$\mathbf{v}_1,\ldots,\mathbf{v}_{m-q} \in \Xi$ and
$|\mathbf{v}_i| \ll T$, it follows that $\xi_1 \le \cdots \le \xi_s \ll T$. By Minkowski's second theorem  \eqref{eq:minkowski} and $\sigma_i \ll T$, we have $T^{-q} \ll (\det(\Sigma))^{-1} \ll \xi_1$.

We now restrict to lattices $\Xi \subseteq \Gamma_\Sigma/\Sigma$ with successive minima inside dyadic intervals $T^{-q} \ll \rho_i \le \xi_i \le 2 \rho_i$ with $\rho_1 \le \cdots \le \rho_{s} \ll T$. 
By Proposition~\ref{prop:countingflags}, the number of possible tuples $(\Sigma,\Xi)$
is 
\[
    \ll_\varepsilon
    T^{\varepsilon/2}
\prod_{i=1}^q\theta_i^{2(m-i+1)}
\prod_{k=1}^s\rho_k^{2(m-q-k+1)}.
\]
The first product accounts for the possible lattices $\Sigma$, while the second accounts for the possible lattices $\Xi$ inside the factor lattice $\Gamma_\Sigma/\Sigma$. For each tuple $(\Sigma, \Xi)$, by Lemma~\ref{lem:latticepointboundbox}, the number of possible $\mathbf{v}_1,\ldots,\mathbf{v}_{m-q}$ is 
\[
    \ll \prod_{j=1}^s \left( 1 
    + \frac{T}{\rho_j}
    \right)^{m-q} \ll
    \prod_{j=1}^s \frac{T^{m-q}}{\rho_j^{m-q}} = T^{s(m-q)}  \prod_{j=1}^s {\rho_j^{-(m-q)}}.
\]
Therefore, the contribution to the final count from this dyadic interval is at most
\[
    \ll_\varepsilon
     T^{s(m-q)+\varepsilon/2}
\prod_{i=1}^q\theta_i^{2(m-i+1)}
\prod_{k=1}^s\rho_k^{m-q-2(k-1)}.
\]

It now suffices to show that
\[
\prod_{k=1}^s\rho_k^{m-q-2(k-1)}
    \ll T^{s(m-q-s+1)}.
\]
Then, since $s \le m-q$ and $T^{-q} \ll \rho_i \ll T$, the lemma statement follows by summing over the $\ll_\varepsilon T^{\varepsilon/2}$ dyadic ranges.

The sequence $\log(\rho_i)$ is increasing, whereas the sequence $m-q-2i+2$ is decreasing. Thus the reverse Chebyshev sum inequality yields 
\begin{align*}
    \sum_{i=1}^s (m-q-2i+2) \log(\rho_i) &\le \frac{1}{s} \sum_{i=1}^s \log(\rho_i) 
    \sum_{j=1}^s (m-q-2j+2) \\
    &= (m-q-s+1) \sum_{i=1}^s \log(\rho_i).
\end{align*}
Since $\rho_i \ll T$, this implies 
\[
\prod_{k=1}^s\rho_k^{m-q-2(k-1)} \ll \prod_{k=1}^s\rho_k^{m-q-s+1} \ll T^{s(m-q-s+1)}. 
\]
Summing over the $\ll1$ possible choices of $s$ and over the $\ll_\varepsilon T^{\varepsilon/2}$ dyadic ranges proves the lemma.
\end{proof}

\begin{lemma}\label{lem:boundingpersigmaw}
    Fix a primitive isotropic lattice $\Sigma \subseteq \Z^{2m}$ of rank $q$ and fix $(\w_i)_{i=1}^m \in (\Gamma_\Sigma / \Sigma)^m$. Then the number of $U =  \left(
        \x_1~\cdots~\x_m~
        \y_1~\cdots~\y_m
        \right) \in \mathcal{A}_T(\boldsymbol{\theta})$ such that 
        \begin{enumerate}
            \item $\Sigma_U = \Sigma$ and
            \item $\pi_{\Sigma}(\x_i)=\w_i
        $ for $i=1,\ldots,m$ 
        \end{enumerate}
        is  
        \[
            \# \{ U \text{ with \emph{(1)} and \emph{(2)}} \}  \ll_\varepsilon
    T^{q(2m-q+1)+\varepsilon}
    \prod_{i=1}^q
    \theta_i^{-2(m-i+1)}.
        \]
\end{lemma}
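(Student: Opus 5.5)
The plan is to count the columns in two stages: first the $\x_i$ given $(\Sigma,(\w_i))$, then the $\y_i$ given the $\x_i$. Throughout I would work in a basis $\z_1,\ldots,\z_q$ of $\Sigma$ supplied by Lemma~\ref{lem:basis_successive_min_size}, so that $|\z_k|\ll\theta_k$, and a coefficient vector of any $\x\in\Sigma$ with $|\x|\ll T$ has $k$-th entry $\ll T/\theta_k$. I have not checked that the two stages combine to the stated exponent; the final step below is where this has to be verified.

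\emph{Stage 1: the first $m$ columns.} Condition (2) says $\x_i\in\pi_\Sigma^{-1}(\w_i)\cap\Gamma_\Sigma$. This is a single coset $\x_i^{0}+\Sigma$, because $\Sigma$ is primitive in $\Gamma_\Sigma$. The box $\mathfrak B_{\mathbf K}(T)$ confines $\x_i$ to a set of diameter $\ll T$. Fixing one admissible tuple $(\x_i^0)$, we get $\x_i=\x_i^0+\boldsymbol\tau_i$ with $\boldsymbol\tau_i\in\Sigma$ and $|\boldsymbol\tau_i|\ll T$. Lemma~\ref{lem:latticepointboundbox} then bounds the number of choices of each $\boldsymbol\tau_i$ by $\prod_{k}(T/\theta_k)$.

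This crude count must be sharpened. The coefficients $\boldsymbol\tau_i=\sum_k g_{ki}\z_k$ form a $q\times m$ integer matrix $G$ with $|g_{ki}|\ll T/\theta_k$. The relevant constraint is that the first $m$ columns of $U$ span the Lagrangian lattice $\Lambda_U$, which contains $\Sigma$ primitively. Hence the $q\times q$ minors of the $\Sigma$-component of the basis $\x_1,\ldots,\x_m$ are constrained by unimodularity. I would exploit this in the same way as in the proof of Proposition~\ref{prop:translated_isotropic_frames}, by choosing the indices so that the shortest directions are hit by as few free coordinates as possible.

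\emph{Stage 2: completions.} Fix $\x_1,\ldots,\x_m$ and one completion $\y_1^0,\ldots,\y_m^0$. For any other completion, $\omega(\x_j,\y_i-\y_i^0)=0$ for all $j$. Since $\Lambda_U$ is Lagrangian and primitive, this gives $\y_i-\y_i^0\in\Lambda_U$. The box gives $|\y_i-\y_i^0|\le 2T\le A^qT$, so by the definition of $\Sigma_U$ these differences lie in $\Sigma$.

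The standard description of symplectic matrices with a fixed first block column says that $(\y_i-\y_i^0)_i=XS$, where $X=(\x_1\cdots\x_m)$ and $S$ is an integer symmetric $m\times m$ matrix. Requiring every column of $XS$ to lie in $\Sigma$ is expected to force $S$, in a basis of $\Lambda_U$ extending $\z_1,\ldots,\z_q$, to be supported on a symmetric $q\times q$ block. Its entries satisfy $|s_{kl}|\ll T/\theta_{\max(k,l)}$. This yields
\[
\ll\prod_{l=1}^q\left(1+\frac{T}{\theta_l}\right)^{l}
\]
completions, which is the saving coming from the isotropy relations $\omega(\y_i,\y_j)=0$.

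\emph{Combining, and the main obstacle.} The last step is to multiply the two counts, sum over the $\ll_\varepsilon T^\varepsilon$ auxiliary dyadic parameters, and check that the exponents of the $\theta_l$ combine to $-2(m-l+1)$, with total $T$-exponent $q(2m-q+1)=\sum_{l=1}^q 2(m-l+1)$. This exponent bookkeeping is the step I expect to be hardest.

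The naive Stage~1 count gives $\theta_l^{-m}$, and together with Stage~2 the total exponent of $\theta_l$ is $-(m+l)$ rather than $-2(m-l+1)$. So the Stage~1 bound must be improved by using both unimodularity and the interaction between $G$ and the symmetric matrix $S$.

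What I would try first is to reparametrise $U$ as $U_0\cdot P$, where $U_0$ is a fixed symplectic matrix whose first $q$ columns are $\z_1,\ldots,\z_q$ and $P$ lies in the integral points of the parabolic stabilising $\Sigma$. One would then count the unipotent part of $P$ in a skewed box, with side lengths $T/\theta_k$ in the directions indexed by $\Sigma$ and $\ll T$ elsewhere. The exponent $2(m-l+1)$ attached to $\theta_l$ should then appear as the number of root directions of this unipotent radical pairing with the $l$-th short direction, matching Corollary~\ref{cor:succminima}.
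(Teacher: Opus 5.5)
There is a genuine gap, and it is exactly at the point you flag yourself. Your two stages are essentially right. Each $\x_i$ runs over a coset $\x_i^0+\Sigma$ whose $\z_k$-coordinates vary by $\ll T/\theta_k$, and completions differ by $XS$ with $S$ symmetric, with the differences lying in $\Sigma$.

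The problem is the combination. Multiply a Stage~1 count that uses only the box and unimodularity by a completion bound that is uniform in the $\x$-tuple, and the result is too lossy. For example, take $\mathbf K=\mathbf 0$ and $\Sigma=\Z^m\times\{\mathbf 0\}$, so that $q=m$ and $\theta_k=1$. There are $\asymp T^{m^2-m}$ unimodular coefficient matrices with entries $\le T$. The single tuple $\x_i=e_i$ has $\asymp T^{m(m+1)/2}$ completions. The product exceeds the target $T^{m(m+1)}$ once $m\ge 4$.

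What is needed is that $\x$-tuples admitting many completions are correspondingly rare, i.e.\ a joint count of the $\x$-data and the $\y$-data. The proposal never supplies this.

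The parabolic reparametrisation only restates the problem, for three reasons.
\begin{enumerate}
\item The group element acts on the wrong side. Two admissible matrices satisfy $U'=gU$ with $g$ in the integral stabiliser of $\Sigma$ acting on the \emph{left}. By contrast, $U_0^{-1}U$ stabilises $\mathrm{span}(e_1,\ldots,e_q)$ only if $\x_1,\ldots,\x_q\in\Sigma$.
\item The $\GL(\Sigma)$-component of $g$ is governed by your Stage~1 data, so it still has to be counted.
\item The box condition on $U$ does not confine the entries of $g$ uniformly in $\mathbf K$, because $U^{-1}=-\Omega U^t\Omega$ has entries of size $|\mathbf K|$.
\end{enumerate}
Moreover, the unipotent part alone cannot carry the exponent: for $q=m$ it has dimension $m(m+1)/2$, only half of $\sum_l 2(m-l+1)$.

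The paper obtains the joint count by merging the two stages into a single isotropic configuration. Extend the reduced basis $\z_1,\ldots,\z_q$ of $\Sigma$ to a basis $\z_1,\ldots,\z_m$ of $\Lambda_U=\Xi^\uparrow$; this lattice is determined by $(\Sigma,(\w_i))$. Fix a reference completion $\y_1^0,\ldots,\y_m^0$ for each $\x$-tuple. For $k\le q$, let $\mathbf b_k\in\Z^m$ be the $\z_k$-coordinates of $\x_1,\ldots,\x_m$, and let $\mathbf c_k\in\Z^m$ be the $\z_k$-coordinates of $\y_1-\y_1^0,\ldots,\y_m-\y_m^0$. Your relation $(\y_i-\y_i^0)_i=XS$ becomes $\mathbf c_k=\mathbf b_kS$, and the symmetry of $S$ gives
\[
\omega\bigl((\mathbf b_k,\mathbf c_k),(\mathbf b_l,\mathbf c_l)\bigr)=\langle\mathbf b_k,\mathbf c_l\rangle-\langle\mathbf c_k,\mathbf b_l\rangle=0 .
\]
Hence $\mathbf v_k=(\mathbf b_k,\mathbf c_k)$, $1\le k\le q$, is an isotropic $q$-tuple in $\Z^{2m}$. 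Each $\mathbf v_k$ lies in a set of diameter $\ll T/\theta_k$: the $\mathbf b$-part in a translated box, the $\mathbf c$-part near $\mathbf 0$. The map $U\mapsto(\mathbf v_1,\ldots,\mathbf v_q)$ is injective. Now apply Proposition~\ref{prop:translated_isotropic_frames} with $r=q$ and $D_k\asymp T/\theta_k$, which are decreasing and $\ge 1$ since $\theta_q\le A^mT$. This gives exactly $T^{\varepsilon}\prod_{k=1}^q(T/\theta_k)^{2(m-k+1)}$.

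The correlation you were looking for lives inside that proposition. It sorts tuples by the primitive isotropic lattice they span, and trades the number of such lattices against the number of tuples in each. The lattice count comes from Corollary~\ref{cor:succminima}, i.e.\ Franke--Manin--Tschinkel on isotropic flags, which is where the weights $2(m-k+1)$ originate.

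Separately, one of your Stage~2 claims is unjustified. In the $\z$-basis your block entries are $s_{kl}=\langle\mathbf c_k,\mathbf b_l\rangle$, which need not be $\ll T/\theta_{\max(k,l)}$ when $|\mathbf K|$ is large. Your completion count $\prod_l(1+T/\theta_l)^l$ is still true, for instance by choosing $\mathbf c_q,\mathbf c_{q-1},\ldots$ successively, but not for the reason you give.
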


\begin{proof}
     Let $\Sigma \subseteq \Z^{2m}$ be a primitive isotropic lattice of rank $q$ and let $(\w_i)_{i=1}^m \in (\Gamma_\Sigma / \Sigma)^m$. 
     We begin by showing that the tuple $(\Sigma, (\w_i)_{i=1}^m)$ uniquely determines the lattice $\Lambda_U$.
     
     Consider the lattice 
     \[
        \Xi = \mathrm{span}_\Z(\w_1,\ldots,\w_m) \subseteq \Gamma_\Sigma / \Sigma.
     \]
     Then for every integer symplectic matrix $U$ satisfying (1) and (2), we have 
     \[
     \Lambda_U / \Sigma = \Xi.
     \]
     Let us furthermore define 
     \[
     \Xi^\uparrow = \pi_\Sigma^{-1}( \Xi)\cap \Gamma_\Sigma.
     \]
     We claim that $\Lambda_U = \Xi^\uparrow$. 
     Since $\Sigma \subseteq \Lambda_U$ and $\Lambda_U$ is isotropic, we have $\Lambda_U \subseteq \Gamma_\Sigma$;
     since $\Lambda_U = \mathrm{span}_\Z (\x_1, \ldots, \x_m)$ with $\w_i =\pi_\Sigma(\x_i)$, it is clear that $\x_1, \ldots, \x_m \in   \pi_\Sigma^{-1}( \Xi)$. Therefore we have $ \Lambda_U \subseteq \Xi^\uparrow$. Conversely, assume that $\z \in \Xi^\uparrow$. Then $\pi_\Sigma(\z) \in \Xi = \Lambda_U / \Sigma$ and so there exists some $\x \in \Lambda_U$ with 
     \[
     \pi_{\Sigma}(\x) = \pi_\Sigma(\z),
     \]
    i.e.~$\z - \x \in \mathrm{span}_\R(\Sigma)$. But now $\x, \z$ are integer vectors and $\Sigma$ is primitive, thus 
    \[
    \z - \x \in \mathrm{span}_\R(\Sigma) \cap \Z^{2m} = \Sigma \subseteq \Lambda_U.
    \]
    Therefore we get $\z = \x + (\z-\x) \in \Lambda_U$, which finishes our proof of the equality $\Lambda_U = \Xi^\uparrow$, and therefore also proves that the tuple $(\Sigma, (\w_i)_{i=1}^m)$ uniquely determines the lattice $\Lambda_U$.

    It remains to count the possible 
    $U \in \mathcal{A}_T(\boldsymbol{\theta})$ with $\Lambda_U = \Xi^\uparrow$. 
    To each $U$, we will associate, according to
    a fixed basis, a tuple of coefficient vectors $\mathbf{v}_1,\ldots,\mathbf{v}_q$. We will show that this tuple is isotropic and that each $\mathbf{v}_i$ is contained in a set of diameter $\ll T/\theta_i$. An application of Proposition~\ref{prop:translated_isotropic_frames} will then complete the proof.
    
    Let us choose a basis $\z_1, \ldots, \z_q$ of $\Sigma$ as in Lemma~\ref{lem:basis_successive_min_size}. Since $\Sigma$ is a primitive lattice in $\Lambda_U$, we may extend the basis $\z_1, \ldots, \z_q$ to a basis $\z_1,\ldots,\z_m$ of $\Lambda_U$.  Since $U \in \mathcal{A}_T(\boldsymbol{\theta})$, the successive minima $\sigma_1,\ldots,\sigma_q$ of $\Sigma_U = \Sigma$ satisfy 
    \[
    \theta_i \le \sigma_i \le 2 \theta_i \quad \text{ for $i =1,\ldots,q$;}
    \]
    by our choice of basis (Lemma~\ref{lem:basis_successive_min_size}), we thus have 
    \begin{equation}\label{eq:basis_of_sigma}
        |\z_i| \ll \theta_i \qquad \text{ for $i =1,\ldots,q$.}
    \end{equation}

    The lemma statement is trivially true if there are no admissible matrices, so we may assume that admissible matrices exist; we fix $U = (\x_1~\cdots~\x_m~\y_1~\cdots~\y_m)$ satisfying (1) and (2). 
 Then we may write 
    \begin{equation}\label{eq:bij}
         \x_i = \sum_{k=1}^m b_{i,k} \z_k 
    \end{equation}
    for some $b_{i,k} \in \Z$. 
    For $i=1,\ldots,q$, set $\mathbf{b}_i = (b_{1,i},\ldots,b_{m,i})$. 
    
    We show that the $\mathbf{b}_i$ lie in a box of diameter $\ll T/\theta_i$.
     Let us compare the first $m$ columns of $U$ with the first $m$ columns of another symplectic integer matrix $U_0$ with (1) and (2). We write
      \begin{equation*}
      \x_i^0 = \sum_{k=1}^m b_{i,k}^0 \z_k
    \end{equation*} and $\mathbf{b}_i^0 = (b_{1,i}^0,\ldots,b_{m,i}^0)$ for $i=1,\ldots,q$. 
      Since both $\x_i$ and $\x_i^0$ are contained in the preimage of $\w_i$ under $\pi_\Sigma$, we have \[
      \x_i - \x_i^0 = \mathbf{s} \in \mathrm{span}_{\R}(\Sigma)\cap \Z^{2m} = \Sigma,
      \]
      and since $\x_i$ and $\x_i^0$ lie inside the same column box, as defined by the constraints on $\mathcal{A}_T(\boldsymbol{\theta})$, we have $|\mathbf{s}| \ll T$. 
    For $i=1,\ldots,m$, write 
    \begin{equation*}
     \x_i - \x_i^0 = \sum_{j=1}^q a_{i,j} \z_j, \qquad a_{i,j} \in \Z.   
    \end{equation*}
    By our choice of basis (Lemma~\ref{lem:basis_successive_min_size}), setting
    $
    \mathbf{a}_i = (a_{1,i},\ldots,a_{m,i})$, we get $|\mathbf{a}_i | \ll T/\theta_i$, i.e. 
    \[
    |\mathbf{a}_i|= | \mathbf{b}_i - \mathbf{b}_i^0 | \ll T/\theta_i,
    \]
    which shows that $ \mathbf{b}_i$ lies in a box of diameter $\ll T/\theta_i$.

    Now, for our fixed admissible matrix $U = (\x_1~\cdots~\x_m~\y_1~\cdots~\y_m)$, we aim to bound the number of tuples $\widetilde{\y}_1,\ldots,\widetilde{\y}_m$ such that $\widetilde{U}=(\x_1~\cdots~\x_m ~\widetilde{\y}_1~\cdots~\widetilde{\y}_m)$ is also admissible, i.e.~such that $\widetilde{U}$ lies in $ \mathcal{A}_T(\boldsymbol{\theta})$ and satisfies (1) and (2).
    For this, we first show that there is a \emph{symmetric} matrix $M \in \mathcal{M}_m(\Z)$ such that 
    \[
    (\y_1 - \widetilde{\y}_1~\cdots~\y_m - \widetilde{\y}_m) = (\x_1~\cdots~\x_m)M,
    \]
    and that $\y_i - \widetilde{\y}_i \in \Sigma$ for all $1 \le i \le m$. 
    
    Since both $U$ and $\widetilde{U}$ are symplectic, we have 
    \[
    \omega(\x_i, \y_j - \widetilde{\y}_j) = 0
    \]
    for all $1 \le i,j \le m$.
    Thus the differences of the columns lie in the symplectic complement of the Lagrangian lattice spanned by the $\x_i$, and thus
    \[
        \y_j - \widetilde{\y}_j \in \Lambda_U^{\perp_\omega} \cap \Z^{2m} = \Lambda_U.
    \]
    Thus there exists some matrix $M =(m_{i,j})_{i,j=1}^m \in \mathcal{M}_m(\Z)$ with 
    \[
    (\y_1 - \widetilde{\y}_1~\cdots~\y_m - \widetilde{\y}_m) = (\x_1~\cdots~\x_m)M,
    \]
    that is,
    \[
    \y_i - \widetilde{\y}_i = \sum_{k=1}^m m_{k,i} \x_k \quad \text{ for $i=1,\ldots,m$}.
    \]
    Moreover, since $U$ is a symplectic matrix, we have 
    \[
    \omega (\x_i,\y_j) = 
    \begin{cases}
        1 & \text{ if $i =j$},
        \\
        0 & \text{ otherwise},
    \end{cases} \qquad 
     \omega (\y_j, \x_i) =  
    \begin{cases}
        -1 & \text{ if $i =j$},
        \\
        0 & \text{ otherwise},
    \end{cases}
    \]
    as well as $\omega(\y_i,\y_j) =0$ for all $1\le i,j \le m$; analogous relations hold for the columns of $\widetilde{U}$.
    Thus, by the bilinearity of $\omega$,  
    \[
    m_{j,i} = \omega\left(\sum_{k=1}^m m_{k,i} \x_k, \y_j\right) = \omega( \y_i - \widetilde{\y}_i, \y_j) =   \omega( \y_i, \y_j)  - \omega(\widetilde{\y}_i , \y_j)= -\omega(\widetilde{\y}_i , \y_j) .
    \]
   Moreover, $\y_i - \widetilde{\y}_i $ and $ \y_j - \widetilde{\y}_j$ lie in the isotropic lattice $\Lambda_U$, so 
   \[
   0 = \omega(\y_i - \widetilde{\y}_i, \y_j - \widetilde{\y}_j) = 
    \omega(\y_i , \y_j) 
    -
    \omega( \widetilde{\y}_i, \y_j ) 
    -
    \omega(\y_i ,  \widetilde{\y}_j)
    +
    \omega( \widetilde{\y}_i, \widetilde{\y}_j) = m_{j,i}-m_{i,j},
   \]
   so $M = M^t$ is a symmetric matrix.

    Since the columns of $U$ and $\widetilde{U}$ lie inside the same box, where the constraints are given by the definition of $\mathcal{A}_T(\boldsymbol{\theta}) \subseteq \mathfrak{B}_{\mathbf{K}}(T)$, we have $\|\y_i - \widetilde{\y}_i\|_2 \le 2\sqrt{2m}T$ for all $1 \le i \le m$.
    Thus we also have 
    \[
    | \pi_\Sigma( \y_i - \widetilde{\y}_i ) |
    \le 
    \| \pi_\Sigma( \y_i - \widetilde{\y}_i ) \|_2\le 2\sqrt{2m}T,
    \]
    and so, by Lemma~\ref{lem:nonzerolowerbound}, it follows that 
    \[
    \pi_\Sigma( \y_i - \widetilde{\y}_i ) = 0,
    \]
    i.e.~$ \y_i - \widetilde{\y}_i \in \mathrm{span}_{\R}(\Sigma) \cap \Z^{2m} = \Sigma$. Using the basis defined in \eqref{eq:basis_of_sigma}, we may write 
    \begin{equation}\label{eq:cij}
        \y_i - \widetilde{\y}_i = \sum_{j=1}^q c_{i,j} \z_j.
    \end{equation}
    Upon setting 
    \[
    \mathbf{c}_k = (c_{1,k},\ldots,c_{m,k})
    \]
    for $k=1,\ldots,q$, 
    it follows by our choice of basis that $|\mathbf{c}_k| \ll T/\theta_k$.

    By \eqref{eq:bij} and \eqref{eq:cij} we get 
    \[
    \mathbf{c}_j = \mathbf{b}_jM.
    \]
    Since $M$ is symmetric, we get (in row-vector notation)
    \[
    \langle \mathbf{c}_j, \mathbf{b}_i \rangle = 
    \mathbf{b}_jM \mathbf{b}_i^t = \mathbf{b}_jM^t   \mathbf{b}_i^t=
    \langle \mathbf{b}_j, \mathbf{c}_i \rangle,
     \]
     where $ \langle \cdot, \cdot \rangle $ is the usual Euclidean inner product.
     Now we define 
     $
     \mathbf{v}_i = (\mathbf{b}_i,\mathbf{c}_i)$,
     so that 
     \[
     \omega (\mathbf{v}_i, \mathbf{v}_j) =  \langle \mathbf{c}_j, \mathbf{b}_i \rangle-\langle \mathbf{c}_i, \mathbf{b}_j \rangle =0.
     \]
     This shows that $(\mathbf{v}_1,\ldots,\mathbf{v}_q)$ is a $q$-tuple of isotropic integer vectors in $\Z^{2m}$. Moreover, $\mathbf{b}_i$ lies in a set of diameter $ \ll T/\theta_i$ and $|\mathbf{c}_i|\ll T/\theta_i$. Therefore each vector $\mathbf{v}_i$ lies in a box of diameter $D_i \ll 1+ T/\theta_i \ll T/\theta_i$; here we used the fact that $\theta_q \le A^m T$ for fixed $A$.
     Thus, Proposition~\ref{prop:translated_isotropic_frames} gives that the number of possible $\mathbf{v}_1, \ldots, \mathbf{v}_q$ is 
     \begin{equation}\label{eq:boundonvi}
     \ll_\varepsilon T^\varepsilon \prod_{j=1}^q (T/\theta_j)^{2(m-j+1)} =
     T^{q(2m-q+1) +\varepsilon} \prod_{j=1}^q \theta_j^{-2(m-j+1)}.
     \end{equation}

The vectors $\mathbf b_1,\ldots,\mathbf b_q$ determine the first $m$ columns. Once these are fixed, we choose one possible completion; relative to this choice, the vectors $\mathbf c_1,\ldots,\mathbf c_q$ determine the final $m$ columns. Hence the map
\[
 U \mapsto (\mathbf v_1,\ldots,\mathbf v_q) 
\]
is injective, and the lemma statement follows from \eqref{eq:boundonvi}.   
\end{proof}

We are now ready to prove Proposition~\ref{prop:ATbound}.

\begin{proof}[Proof of Proposition~\ref{prop:ATbound}]
     By Lemma~\ref{lem:bounding_sigmaw}, the number of pairs $(\Sigma, (\w_i)_{i=1}^m)$ such that there exists some symplectic integer matrix $U =  \left(
    \x_1 ~ \cdots ~ \x_m ~ \y_1 ~ \cdots ~ \y_m
    \right) \in \mathcal{A}_T(\boldsymbol{\theta})$  with \begin{equation}\label{eq:cond}
    \Sigma = \Sigma_U \quad  \text{and} \quad \w_i = \pi_{\Sigma_U}(\x_i)
    \text{ for } i=1,\ldots,m
    \end{equation} can be bounded from above by
    \[
        \# \{ 
        (\Sigma, (\w_i)_{i=1}^m) \text{ satisfying \eqref{eq:cond}}
        \}
        \ll_\varepsilon T^{ (m-q)(m-q+1) +\varepsilon/2}  \prod_{i=1}^q \theta_i^{2(m-i+1)}.
    \]
    On the other hand, by Lemma~\ref{lem:boundingpersigmaw}, the number of admissible $U$ for each tuple $(\Sigma,(\w_i)_{i=1}^m)$ is 
    \[
    \ll_\varepsilon  T^{q(2m-q+1)+\varepsilon/2}
    \prod_{i=1}^q
    \theta_i^{-2(m-i+1)}. 
    \]
    Multiplying together both bounds yields the proposition.
\end{proof}

\subsection{Completing the proof of Theorem~\ref{thm:upperbound}} \label{section:completingproof}
We begin by bounding the number of matrices $U = (\x_1 ~ \cdots \x_m ~ \y_1~\cdots~\y_m) \in \Sp_m(\Z) \cap \mathfrak{B}_{\mathbf{K}}(T)$
such that the lattice $\Sigma_U$ has rank $q = 0$. By the definition of $\Sigma_U$ in \eqref{eq:def_SigmaU} and our lower bound on $A$ in \eqref{eq:choice_of_A}, this means that 
\begin{equation}\label{eq:lowerboudn}
    |\x| > 2 \sqrt{2m} T
\end{equation}
for all $\mathbf{0} \neq \x \in \Lambda_U$. Since each column $\x_i$ lies in a set of diameter $\ll T$, by Proposition~\ref{prop:translated_isotropic_frames} there are  
\[
    \ll_\varepsilon 
    T^\varepsilon \prod_{i=1}^m T^{2(m-i+1)}
    = T^{m(m+1)+\varepsilon}
\]
possible choices for $\x_1,\ldots,\x_m$. For each such tuple $\x_1,\ldots,\x_m$, there is at most one possibility for  $(\y_1,\ldots,\y_m)$. Indeed, for two possible completions 
$(\y_1,\ldots,\y_m)$ and
$(\widetilde{\y}_1,\ldots,\widetilde{\y}_m)$, we get 
\[
    \y_i - \widetilde{\y}_i \in \Lambda_U.
\]
But then, since $\y_i$ and $\widetilde{\y}_i$ lie in the same box of diameter $2\sqrt{2m}T$, we also have $| \y_i - \widetilde{\y}_i | \le 2\sqrt{2m}T$. By \eqref{eq:lowerboudn}, it follows that $\y_i = \widetilde{\y}_i$.

It remains to deal with matrices $U$ such that $\Sigma_U$ has rank $q \ge 1$.
Decompose the successive minima $\sigma_1 \le \cdots \le \sigma_q \le A^q T$ according to $\ll_\varepsilon T^{\varepsilon/2}$ dyadic ranges $\theta_i \le \sigma_i \le 2 \theta_i$. For each such range, there are 
\[
    \ll_\varepsilon T^{m(m+1)+\varepsilon/2}
\]
possible matrices $U$ by Proposition~\ref{prop:ATbound}. Summing over the dyadic ranges completes the proof. \qed

\section{The asymptotic formula for centred boxes}\label{section:asymptotic}

In this section, we specialise Maucourant's result from Section~\ref{section:prelim_lie} to the group of symplectic matrices.

\begin{proof}[Proof of Theorem~\ref{theorem:asymptotic}]
We now specialise the setting of Theorem~\ref{thm:mau} to the case of the connected, noncompact, semisimple real Lie group
$\Sp_m(\R)$ with finite centre, containing the irreducible lattice $\Sp_m(\Z)$. 

The associated Lie algebra is 
\[
    \mathfrak{sp}_m(\R) = \{ 
    X \in \mathcal{M}_{2m}(\R):~X^t\Omega + \Omega X = \mathbf{0}
    \},
\]
i.e.~the set of all matrices of the form 
\[
    \begin{pmatrix}
        A & B \\
        C & -A^t
    \end{pmatrix}, \quad \text{where} \quad A,B,C \in \mathcal{M}_m(\R), \quad B = B^t, \quad C= C^t.
\]
The Lie bracket is given via 
$
    [X,Y] = XY - YX.
$
We have the usual matrix exponential function $\exp$, and the Killing form is 
$
    B(X,Y) = 2(m+1)\operatorname{tr}(XY).
$
The
function
\[
    \theta:~  \mathfrak{sp}_m(\R) \to  \mathfrak{sp}_m(\R), \quad X \mapsto -X^t
\]
is the standard Cartan involution, 
yielding the Cartan decomposition 
$
    \mathfrak{sp}_m(\R) = \mathfrak{k}\oplus\mathfrak{p}, 
    $
    with
    \[
    \mathfrak{k} = 
    \{
    X \in  \mathfrak{sp}_m(\R):~X=-X^t 
    \}, \qquad 
    \mathfrak{p} = 
    \{
    X \in  \mathfrak{sp}_m(\R):~X=X^t 
    \}.
\]

A convenient maximal abelian subspace $\mathfrak{a} \subseteq \mathfrak{p}$ is given by 
\[
    \mathfrak{a} = 
    \{
    \operatorname{diag}(t_1,\ldots,t_m,-t_1,\ldots,-t_m):~t_1,\ldots,t_m \in \R
    \}.
\]
Indeed, it is clear that $\mathfrak{a}$ is abelian. To see that it is maximal,
let $X$ be a matrix  
\[
X=
\begin{pmatrix}
A&B\\
B&-A
\end{pmatrix}\in\mathfrak{p}
\]
that commutes with every 
\[
H_D=\begin{pmatrix}
D&\mathbf{0}\\
\mathbf{0}&-D
\end{pmatrix}
\in\mathfrak a, \quad \text{where} \quad D = \operatorname{diag}(t_1,\ldots,t_m).
\]
Then
\[
[H_D,X]
=
H_D X - X H_D 
=
\begin{pmatrix}
DA-AD&DB+BD\\
-(DB+BD)&DA-AD
\end{pmatrix}
\]
 vanishes for every diagonal matrix $H_D$, and so 
 \[
 0=(DA - AD)_{i,j} = (t_i - t_j)A_{i,j}.
 \]
 Since the $t_i$ may be chosen arbitrarily, it follows that $A_{i,j}=0$ for $i\neq j$. Similarly, we have 
 \[
    0 = (DB + BD)_{i,j} = (t_i + t_j)B_{i,j},
 \]
 which forces $B = \mathbf{0}$. 
Hence $X\in\mathfrak a$, proving that
$\mathfrak a$ is maximal abelian in $\mathfrak p$.

We now determine the restricted roots $\alpha \in \mathfrak{a}^\star$. We write $X \in \mathfrak{sp}_m(\R)$ as 
\[
X =
  \begin{pmatrix}
        A & B \\
        C & -A^t
    \end{pmatrix},  
\]
for some $A,B,C \in \mathcal{M}_m(\R), B = B^t, C= C^t$. Then we have 
\[
[H_D,X]
=
\begin{pmatrix}
\bigl((t_i-t_j)A_{i,j}\bigr)_{i,j}
&
\bigl((t_i+t_j)B_{i,j}\bigr)_{i,j}
\\[4pt]
-\bigl((t_i+t_j)C_{i,j}\bigr)_{i,j}
&
\bigl((t_i-t_j)A_{j,i}\bigr)_{i,j}
\end{pmatrix}.
\]
Defining $\varepsilon_i \in \mathfrak{a}^\star$ by  $\varepsilon_i (H_D) = t_i$, one may read off from the above that the restricted roots $\mathcal{R}$ in $\mathfrak{a}^\star$ are
\[
\mathcal{R} =
\{\pm ( \varepsilon_k - \varepsilon_\ell):  k < \ell\} \cup 
    \{\pm ( \varepsilon_k + \varepsilon_\ell):  k < \ell  \} \cup 
    \{\pm 2 \varepsilon_k\} .
\]
Moreover, each restricted root space is one-dimensional. Indeed, for each
restricted root the relations above leave exactly one free matrix parameter
(up to the symmetry conditions on $B$ and $C$); for instance, for   
$1\leq k<\ell\leq m$, one may compute that
\[
\mathfrak{g}_{\varepsilon_k-\varepsilon_\ell}
=
\left\{
\begin{pmatrix}
t\,(\mathbf{1}_{[i=k,j=\ell]})_{i,j} & 0\\
0 & -t\,(\mathbf{1}_{[j=k,i=\ell]})_{i,j}
\end{pmatrix}
:
t\in\mathbb{R}
\right\}.
\]
Hence we have
\[
m_\alpha=\dim(\mathfrak g_\alpha)=1
\qquad
\text{for all }\alpha\in\mathcal R.
\]
Choosing a regular element $\mathrm{diag}(t_1,\ldots,t_m,-t_1,\ldots,-t_m) \in \mathfrak{a}$ with $t_1 > \cdots > t_m >0$, one may determine the system of positive roots as 
\[
    \mathcal{R}^+ = \{ \varepsilon_k - \varepsilon_\ell: 1 \le k < \ell \le m  \} \cup 
    \{ \varepsilon_k + \varepsilon_\ell: 1 \le k \le \ell \le m  \}.
\]
We have 
\begin{align*}
    \beta = \sum_{\alpha \in \mathcal{R}^+} m_{\alpha}\alpha  
    &= \sum_{1 \le k < \ell \le m} (\varepsilon_k - \varepsilon_\ell) + \sum_{1 \le k \le \ell \le m} (\varepsilon_k + \varepsilon_\ell) \\
    &= \sum_{1 \le k < \ell \le m} \left( 
    (\varepsilon_k - \varepsilon_\ell) 
    +
    (\varepsilon_k + \varepsilon_\ell)
    \right) + 2\sum_{k=1}^m \varepsilon_k,
    \end{align*}
    where 
    \begin{align*}
    \sum_{1 \le k < \ell \le m} \left( 
    (\varepsilon_k - \varepsilon_\ell)
    +
     (\varepsilon_k + \varepsilon_\ell)
    \right)=
    2\sum_{k=1}^m 
    \left(
    \# \{\ell:~ k < \ell \le m
    \}
    \right)
    \varepsilon_k =
    2\sum_{k=1}^{m} 
    (m-k)
    \varepsilon_k,
\end{align*}
and therefore 
\[
    \beta = 2\sum_{k=1}^{m} 
    (m-k+1)
    \varepsilon_k.
\]

The standard representation of $\Sp_m(\R)$ is simply the inclusion 
\[
    \rho:~ \Sp_m(\R) \xhookrightarrow{} \GL_{2m}(\R).
\]
We have 
\[
    \exp(H_D) = \operatorname{diag} (e^{t_1},\ldots,e^{t_m}, e^{-t_1},\ldots,e^{-t_m}),
\]
and so the weights of the representation are $\pm \varepsilon_1, \ldots, \pm \varepsilon_m$. Therefore the weight polytope is 
\[
   \mathcal{C}_\rho = \operatorname{conv}(\pm \varepsilon_1,\ldots,\pm\varepsilon_m) = \left\{
    \sum_{i=1}^m \mu_i \varepsilon_i:~ 
    \sum_{i=1}^m |\mu_i| \le 1
    \right\}.
\]
For $r>0$, we have $r^{-1} \beta \in \mathcal{C}_\rho$ if and only if
\[
    r^{-1} 2 \sum_{k=1}^m (m-k+1)  = r^{-1}m(m+1)
 \le 1.
\]
Therefore $d = \min \{r:~ r \text{ admissible} \} = m(m+1)$. 
We have 
\[
    d^{-1} \beta = \sum_{k=1}^m \frac{2(m-k+1)}{m(m+1)}\varepsilon_k;
\]
so $d^{-1}\beta$ lies in the relative interior of the $(m-1)$-dimensional face
\[
F_\beta = \operatorname{conv} \{\varepsilon_1,\ldots,\varepsilon_m\} = \varepsilon_1 + \operatorname{conv} \{\mathbf{0}, \varepsilon_2-\varepsilon_1,\ldots,\varepsilon_m-\varepsilon_1\}. 
\]
Since $\mathcal C_\rho$ has dimension $m$, we get
$
e = \operatorname{codim}(F_\beta) -1 =0. 
$

Finally, applying Theorem~\ref{thm:mau} to $\Sp_m(\Z) \subset \Sp_m(\R)$ with the norm $\|\cdot \| = |\cdot|$ gives
\[
N_m(\mathfrak{B}_{\mathbf{0}}(T)) =
  \# \{ 
  g \in \Sp_m(\Z):~|g|\le T
  \} \sim a_m T^{d} (\log(T))^e = a_m T^{m(m+1)}
\]
as $T \to \infty$.
\end{proof}

\end{document}